\journal{}
\newtheorem{tm}{Theorem}[section]
\newtheorem{rk}{Remark}[section]
\newtheorem{prop}{Proposition}[section]
\newtheorem{lm}{Lemma}[section]
\newtheorem{cor}{Corollary}[section]
\newcommand{\E}{\mathbb E}
\newcommand{\PP}{\mathbb P}
\newcommand{\N}{\mathbb N}
\newcommand{\R}{\mathbb R}
\newcommand{\Z}{\mathbb Z}
\newcommand{\bi}{\mathbf i}
\newcommand{\C}{\mathcal C}
\newcommand{\OO}{\mathcal O}
\newcommand{\PPP}{\mathcal P}
\newcommand{\OOO} {\mathscr O}
\newcommand{\HH}{\mathbb H}
\newcommand{\LL}{\mathcal L}
\newcommand{\FFF}{\mathscr F}
\newcommand{\ee}{\varepsilon}
\newcommand{\<}{\langle}
\renewcommand{\>}{\rangle}
\begin{document}
	
	\begin{frontmatter}
		
		\title{Strong Convergence Rate of Splitting Schemes for Stochastic Nonlinear Schr\"odinger Equations}
		\tnotetext[mytitlenote]{This work was supported by National Natural Science Foundation of China (No. 91630312, No. 91530118,  and No. 11290142).} 
		
		\author[cas]{Jianbo Cui\corref{cor}}
		\ead{jianbocui@lsec.cc.ac.cn}
		
		\author[cas]{Jialin Hong}
		\ead{hjl@lsec.cc.ac.cn}
		
		\author[cas]{Zhihui Liu}
		\ead{liuzhihui@lsec.cc.ac.cn}
		
		\author[cas1]{Weien Zhou}
		\ead{weienzhou@nudt.edu.cn}
		
		\cortext[cor]{Corresponding author.}
		
		\address[cas]{1. LSEC, ICMSEC, 
			Academy of Mathematics and Systems Science, Chinese Academy of Sciences, Beijing,  100190, China\qquad
			2. School of Mathematical Science, University of Chinese Academy of Sciences, Beijing, 100049, China 
			}
		\address[cas1]{College of Science, National University of Defense Technology, Changsha, China}

	\begin{abstract}
		In this paper, we show that solutions of  stochastic nonlinear Schr\"odinger (NLS) equations can be approximated by  solutions of coupled splitting systems.
		Based on these systems,  we propose a new kind of  fully  discrete splitting schemes which possess algebraic strong convergence rates for stochastic  NLS equations. Key ingredients of our approach are the exponential integrability and stability of the corresponding 
splitting systems and numerical approximations.
In particular, under very mild conditions, we derive the optimal strong convergence rate $\OOO(N^{-2}+\tau^\frac12)$ of the spectral splitting Crank--Nicolson scheme, where $N$ and $\tau$ denote the dimension of the approximate space and the time step size, respectively.
	\end{abstract}
		
		\begin{keyword}
		stochastic nonlinear Schr\"odinger equation \sep 
			strong convergence rate \sep 
			exponential integrability \sep 
			splitting scheme \sep 
			non-monotone coefficients  
			\MSC[2010]
			60H35 \sep 
			60H15 \sep 
			60G05
	\end{keyword}

	\end{frontmatter}

\section{Introduction}
For stochastic partial differential equations (SPDEs) with monotone coefficients, there exist fruitful results on strong error analysis of temporal and/or spatial numerical approximations by using semigroup or variational frameworks (see e.g. \cite{AC16,  CHL17, CHL16a, dBD06, JK10, JR15, KLL15, Yan05}).
However, for SPDEs with non-monotone coefficients,
so far as we know, it is a long standing open problem to construct temporal numerical approximations and full discretizations which possess algebraic strong convergence rates. 
This is the main motivation of the present paper.

As a classical type of SPDEs with non-monotone coefficients, stochastic NLS equations model the propagation of nonlinear dispersive waves in inhomogeneous or random media (see e.g. \cite{KV94} and references therein). 
Our main purpose in this paper is to construct temporal approximations and fully discrete schemes possessing algebraic strong convergence rates for the one-dimensional stochastic NLS equation
\begin{align}\label{nls}
\begin{cases}
\bi du+(\Delta u+\lambda |u|^2 u) dt=u\circ dW(t),
\quad \text{in}\quad  (0,T]\times \OOO; \\
u(t)=0,\quad \text{on}\quad  [0,T]\times \partial \OOO; \\
u(0)=\xi,\quad \text{in}\quad \OOO,
\end{cases}
\end{align}
where $T>0$, $\OOO=(0, 1)$, and $\lambda=1$ or $-1$ corresponds to focusing or defocusing cases, respectively.
Here $W=\{W(t):\ t\in [0,T]\}$ is a $\mathbb L^2(\OOO; \mathbb R)$-valued $Q$-Wiener process on  a stochastic basis $(\Omega, \FFF, \FFF_t, \PP)$, i.e., there exists an orthonormal basis $\{e_k\}_{k\in \N_+} $ of $\mathbb L^2(\OOO; \mathbb R)$ and a sequence of mutually independent, real-valued Brownian motions $\{\beta_k\}_{k\in \N_+} $ such that $W(t)=\sum_{k\in \N_+} Q^{\frac12}e_k\beta_k(t)$, $t\in [0,T]$.

Eq. \eqref{nls} has been investigated both theoretically and numerically.
The well-posedness of Eq. \eqref{nls} has been proved by \cite{dBD03} in $\HH^1$, by \cite{CHP16} and \cite{CHL16b} in $\HH^2$ for defocusing and focusing cases, respectively.
There are also many authors constructing numerical approximations of Eq. \eqref{nls} and obtaining convergence rates in certain sense such as pathwise or in probability weaker than in strong sense (see e.g. \cite{ CHP16, CD17,  CHLZ17, dBD06, Liu13b} and references therein). 
A progress has been made by \cite{CHL16b} where the authors obtain a strong convergence rate of spatial centered difference method for Eq. \eqref{nls}.
Besides the $\HH^2$-a priori estimations, the key ingredient to derive strong convergence rates is the $\HH^1$-exponential integrability of both the exact and numerical solutions (see also \cite{HJ14, HJW16}).
This type of exponential integrability is also useful to get the strongly continuous dependence (in $\mathbb L^p(\Omega; \C([0,T]; \mathbb L^2))$) on initial data of both the exact and numerical solutions, to derive a large deviation principle of  Freidlin--Wentzell type (see \cite[Corollaries 3.1 and 3.2]{CHL16b}) and to deduce Gaussian tail estimations of these solutions (see Corollary \ref{tail}). 
We refer to \cite{HJ14, HJW16, JP16} and references therein for the exponential integrability of a kind of  stochastic evolution   equations  with non-monotone coefficients and of their numerical approximations. 
So far as we know, there exists no result about this type of exponential integrability for a temporally discrete approximation of Eq. \eqref{nls}.
In this work we propose a temporal splitting Crank--Nicolson scheme (see \eqref{spl-cn}), based on a splitting approach and its corresponding splitting processes which are shown to admit the desired exponential integrability.

Let us loosely describe the achievement of a sequence of splitting processes through the splitting approach.
Given an $M\in \N$, denote $\Z_M=\{0,1,\cdots,M\}$.
Let $\tau=\frac T{M}$ and $\{T_m:=(t_m, t_{m+1}],\ t_m=m\tau,\ m\in \Z_{M-1}\}$ be a uniform partition of the interval $(0,T]$.
Our main idea is to split Eq. \eqref{nls} in $T_m$, $m\in \Z_{M-1}$, into a deterministic NLS equation with random initial datum and a linear SPDE:
\begin{align}\label{nls-d}
du_{\tau,m}^D(t)
&=\left(\bi \Delta u_{\tau,m}^D(t)
+\bi \lambda |u_{\tau,m}^D(t)|^2 u_{\tau,m}^D(t) \right)dt,
\quad u_{\tau,m}^D (t_m)=u_\tau(t_m),\\
\label{nls-s}
du_{\tau,m}^S(t)
&=-\bi  u_{\tau,m}^S(t) \circ dW(t),
\quad u_{\tau,m}^S (t_m)=u^D_{\tau,m}(t_{m+1}).
\end{align}
Eq. \eqref{nls-d} and \eqref{nls-s} are subjected to homogeneous Dirichlet boundary conditions on $T_m\times \partial \OOO$.
Then, in Section \ref{sec-spl}, we give the definition of a auxiliary  splitting process $u_\tau$ with  the initial datum  $u_\tau(0)=\xi$.
It is shown that this splitting process 
$u_\tau=\{u_\tau(t):\ t\in [0,T]\}$, 
which is left-continuous with finite right-hand limits and $\FFF_t$-adapted (see Proposition \ref{cha-ene-spl}).
Since Eq. \eqref{nls-d} has no analytic  solution, we apply the Crank--Nicolson scheme to temporally discretize Eq. \eqref{nls-d}.
Based on the explicitness of the solution of Eq. \eqref{nls-s}, we get the splitting Crank--Nicolson scheme starting from $\xi$:
\begin{align}\label{splcn}
\begin{cases}
u^D_{m+1}
= u_m+ \bi \tau \Delta  u^D_{m+\frac12}+\bi \lambda \tau \frac {| u_m|^2+| u_{m+1}^D|^2}2  u^D_{m+\frac12},\\
u_{m+1}=\exp\left(-\bi (W_{t_{m+1}}-W_{t_m})\right) u_{m+1}^D,
\quad m\in \Z_M,
\end{cases}
\end{align}
with $ u_{m+\frac12}^D=\frac12(u_m+ u_{m+1}^D)$.

Our first goal is to prove that both $u_\tau=\{u_\tau(t):\ t\in [0,T]\}$ and $\{u_m\}_{m\in \Z_{M}}$ converge to the exact solution $u=\{u(t):\ t\in [0,T]\}$ of Eq. \eqref{nls} with strong order $1/2$ (see Theorem \ref{u-ut} and Theorem \ref{u-um}).
The key requirement is the exponential integrability properties of $u_\tau$ and $u_m$, which is proved by an exponential integrability lemma established in \cite[Corollary 2.4]{CHJ13} or \cite[Proposition 3.1]{CHL16b} (see Lemmas \ref{exp-ut} and \ref{exp-um}). 
To the best of our knowledge, Theorem \ref{u-um} is the first result about strong convergence rates of  temporal approximations for Eq. \eqref{nls} or even for SPDEs with non-monotone coefficients.
We also note that there are several results to numerically approximate SPDEs by splitting schemes (see e.g. \cite{BBM14, Dor12, Liu13a, Liu13b, GK03} and references therein).
\cite{GK03} obtains a strong convergence rate of a splitting scheme for a linear SPDE about stochastic filtering problem;
\cite{Liu13a} gets a strong convergence rate for a linear stochastic Schr\"odinger equation.
However, the splitting schemes in \cite{GK03, Liu13a} applied to Eq. \eqref{nls} would fail to satisfy the desired exponential integrability.

Our second goal is to construct a fully discrete scheme possessing  optimal algebraic strong convergence rate based on the aforementioned splitting approach.
To this end, we apply the splitting Crank--Nicolson scheme \eqref{splcn} to the spatially spectral Galerkin discretization in Section \ref{sec-ful} and get the spectral splitting Crank--Nicolson scheme \eqref{spe-cn}.
The spectral approximate solution $u^N$ is shown to converge to $u$ with strong convergence rate $\OOO(N^{-2})$, where $N$ is the dimension of the spectral approximate space.
This convergence rate is optimal under minimal assumptions on the initial datum $\xi$ and the noise's covariance operator $Q$.
Combining the strong error estimate of the splitting Crank--Nicolson scheme \eqref{splcn}, we finally derive the strong convergence rate $\OOO(N^{-2}+\tau^\frac12)$ of this fully discrete scheme (see Theorem \ref{u-umn}).
 We remark that there exist a lot of alternative 
choices of spatial discretizations. 
For instance, we apply this splitting approach to the spatial centered difference method analyzed in \cite[Theorem 4.1]{CHL16b} and get the related full discretization with algebraic strong convergence rate.

Our article is organized as follows.
We give detailed analysis for the splitting process 
$u_\tau$ in Section \ref{sec-spl}.
In this section, we study the evolutions about the charge, energy and a Lyapunov functional used to control the $\HH^2$-norm of $u_\tau$ as well as its  exponential integrability in $\HH^1$.
Based on the $\HH^2$-a priori estimation and exponential integrability of  $u_\tau$ and the exact solution $u$, we deduce the strong error estimate between $u_\tau$ and $u$.
In Section \ref{sec-cn}, we analyze the temporal splitting Crank--Nicolson scheme and obtain its strong convergence rate.
To perform an implementary full discretization, we apply the proposed splitting Crank--Nicolson scheme to the spectral Galerkin approximate equation in Section \ref{sec-ful} and get the strong convergence rate of this spectral splitting Crank--Nicolson scheme for Eq. \eqref{nls}.
Similar arguments are also applied to spatially discrete equation by centered difference method.

To close this section, we introduce some frequently used notations and assumptions.
The norm and inner product of $\mathbb L^2:=\mathbb L^2(\OOO; \mathbb C)$ is denoted by $\|\cdot\|$ and $\<u, v\>:=\Re\left[\int_{\OOO} \overline u(x) v(x) dx\right]$, respectively.
Throughout we assume that $T$ is a fixed positive number, $\xi \in \HH_0^1\cap \HH^2$ is a deterministic function and $Q^\frac12 \in \LL_2^2= \LL^2(\HH,  \HH_0^1\cap \HH^2)$, i.e.,
\begin{align*}
\|Q^\frac12\|_{\LL_2^2}^2
:=\sum_{k\in \N_+}  \|Q^\frac12 e_k\|_{\HH^2}^2<\infty,
\end{align*}
where $\{e_k\}_{k\in \N_+} $ is any orthonormal basis of $\mathbb L^2(\OOO; \R)$.
We use $C$ and $C'$ to denote a generic constant, independent of the time step size 
$\tau$ and the dimension $N$, which differs from one place to another.

\section{Splitting Process}
\label{sec-spl}

We first give the definition of the auxiliary splitting processes $u_\tau(t)$ and $u_\tau^D(t)$,  $t\in [0,T]$.
For simplicity, we denote the solution operators of Eq. \eqref{nls-d} and \eqref{nls-s} in $T_m$ as $\Phi_{m,t-t_m}^D$ and $\Phi_{m,t-t_m}^S$, respectively.
Next we set the splitting process $u_\tau$ in $T_m$ as 
\begin{align}\label{spl}
u_\tau (t):=u_{\tau,m}^S(t):=(\Phi_{j,t-t_m}^S\Phi_{j,\tau}^D)\prod_{j=1}^{m-1}\big(\Phi_{j,\tau}^S\Phi_{j,\tau}^D\big)u_\tau(0),\quad t\in T_m.
\end{align}
and
\begin{align*}
u_{\tau}^D(t):=u_{\tau,m}^D(t):=\Phi_{j,t-t_m}^D\prod_{j=1}^{m-1}\big(\Phi_{j,\tau}^S\Phi_{j,\tau}^D\big)u_\tau(0),\quad t\in  [t_m,t_{m+1}).
\end{align*}
Our main purpose is to prove that $u_\tau$ possesses the exponential integrability and is a nice approximation of the exact solution $u$ of Eq. \eqref{nls}. 

We first recall the following known results about the well-posedness and strongly continuous dependence on initial  data for Eq. \eqref{nls} as well as exponential integrability of $u$. 
These properties are used to derive algebraic strong convergence rates for numerical approximations of Eq. \eqref{nls}.

\begin{tm} \label{well}
Let $p\ge 1$.
Eq. \eqref{nls} possesses a unique strong solution $u=\{u(t):\ t\in [0,T]\}$ satisfying 
\begin{align}\label{sta-u}
\E\left[\sup_{t\in [0,T]}\|u(t)\|^p_{\HH^2}\right]< \infty
\end{align}
and depending on the initial data in $\mathbb L^p(\Omega; \C([0,T]; \mathbb L^2))$, i.e., if we assume that $\xi^I$, $\xi^{II}\in \HH_0^1\cap \HH^2$ and that $u^I$ and $u^{II}$ are the solutions of Eq. \eqref{nls} with initial data $\xi^I$ and $\xi^{II}$, respectively, then there exists a constant $C=C(\xi^I,\xi^{II},Q,T,p)$ such that 
\begin{align}\label{con-dep}
\left(\E\left[\sup_{t\in [0,T]}\|u^I(t)-u^{II}(t)\|^p \right]\right)^\frac1p
\le C \|\xi^I-\xi^{II}\|.
\end{align}
Moreover, there exist constants $C$ and $\alpha$ depending on $\xi$, $Q$ and $T$ such that
\begin{align}\label{exp-u}
\sup_{t\in[0,T]}\E\left[\exp\left( \frac{\|\nabla u(t)\|^2}{e^{\alpha t}}\right)\right]
&\le C.
\end{align}
\end{tm}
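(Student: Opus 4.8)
The plan is to establish the three assertions in the order charge/energy a priori bounds, then exponential integrability, and finally continuous dependence, since each step feeds the next. First I would exploit the special structure of the noise: because $W$ is real-valued and enters through the Stratonovich product $u\circ dW$ multiplied by $\bi$, the noise acts pointwise as a random phase rotation and therefore conserves the modulus $|u(t,x)|$. Applying the Stratonovich chain rule to $\|u(t)\|^2$ makes the stochastic contribution vanish identically, yielding the pathwise charge conservation $\|u(t)\|=\|\xi\|$. This is the cheap ingredient that controls the $\mathbb L^2$-norm uniformly in time and in probability.

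Next I would control the $\HH^1$-norm through the Hamiltonian $H(u)=\tfrac{1}{2}\|\nabla u\|^2-\tfrac{\lambda}{4}\|u\|_{\mathbb L^4}^4$, which is conserved by the deterministic flow generated by Eq. \eqref{nls-d}. Applying It\^o's formula to $H(u(t))$ produces a bounded-variation drift and a martingale part whose coefficients are controlled by $H(u)$ together with the conserved charge; in the defocusing case $\lambda=-1$ the functional dominates $\|\nabla u\|^2$ directly, while in the focusing case $\lambda=1$ I would invoke the one-dimensional Gagliardo--Nirenberg inequality $\|u\|_{\mathbb L^4}^4\le C\|u\|^3\|\nabla u\|$ and absorb the quartic term to still bound $\|\nabla u\|^2$. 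A Gronwall argument then gives the $\HH^1$ a priori estimate. To reach the $\HH^2$ bound \eqref{sta-u} I would apply It\^o's formula to $\|\Delta u\|^2$ (or an equivalent Lyapunov functional), using the embedding $\HH^1\hookrightarrow \mathbb L^\infty$ valid in dimension one to control the nonlinearity $\Delta(|u|^2u)$ by the already-bounded $\HH^1$-norm, and close the estimate by Gronwall's lemma; existence and uniqueness then follow from a standard truncation and fixed-point scheme on the mild formulation combined with these a priori bounds.

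The exponential integrability \eqref{exp-u} is the crux and the main obstacle, precisely because the cubic nonlinearity is non-monotone so no coercivity is available. Here I would feed the $\HH^1$-functional into the abstract exponential integrability lemma of \cite[Corollary 2.4]{CHJ13} and \cite[Proposition 3.1]{CHL16b}. The decisive point is to verify the lemma's hypothesis: one must exhibit a Lyapunov functional $V\approx\|\nabla u\|^2$ and a constant $\rho$ such that the generator applied to $V$, plus one half of the squared action of the diffusion coefficient on the derivative of $V$, is bounded by $\rho V$ plus a controllable remainder. The phase-rotation structure of the noise is again essential, as it keeps the quadratic variation of $V$ comparable to $V$ itself rather than to higher powers, so that the exponential moment condition holds after the rescaling by $e^{\alpha t}$ that appears in \eqref{exp-u}.

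Finally, for the continuous dependence \eqref{con-dep} I would set $v=u^I-u^{II}$, write down its equation, and apply the Stratonovich chain rule to $\|v(t)\|^2$. The noise term again cancels, while the difference of nonlinearities $|u^I|^2u^I-|u^{II}|^2u^{II}$ is only \emph{locally} Lipschitz, producing a random Gronwall factor of the form $\exp\!\bigl(C\int_0^t(\|u^I\|_{\mathbb L^\infty}^2+\|u^{II}\|_{\mathbb L^\infty}^2)\,ds\bigr)$. Bounding $\|u^I\|_{\mathbb L^\infty}$ and $\|u^{II}\|_{\mathbb L^\infty}$ by the corresponding $\HH^1$-norms and then invoking \eqref{exp-u} (after a H\"older split separating the $p$-th moment of $\|v\|$ from the exponential of the integrated $\HH^1$-norms) is exactly what converts the pathwise estimate into the desired $\mathbb L^p(\Omega)$ bound depending linearly on $\|\xi^I-\xi^{II}\|$. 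This interlocking of steps, with the exponential integrability being precisely what makes the otherwise uncontrollable random Gronwall constant integrable, is the feature I expect to require the most care.
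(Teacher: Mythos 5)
Your proposal is correct and takes essentially the same route as the paper, which does not prove Theorem \ref{well} itself but cites \cite{CHL16b} (Theorem 2.1, Corollary 3.1, Proposition 3.1): your sketch reconstructs exactly those cited arguments, namely pathwise charge conservation, the Hamiltonian/Gagliardo--Nirenberg control of the $\HH^1$-norm, a Lyapunov-functional Gronwall estimate in $\HH^2$, the exponential-integrability lemma of \cite{CHJ13}/\cite{CHL16b} applied to the energy, and the random-Gronwall continuous-dependence bound made integrable by \eqref{exp-u}. No gaps beyond the level of detail expected in a sketch.
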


\textbf{Proof}
We refer to \cite{CHL16b}, Theorem 2.1, Corollary 3.1 and Proposition 3.1,  respectively, 
for the well-posedness and $\HH^2$-a priori estimate estimate \eqref{sta-u}, strongly continuous dependence estimate \eqref{con-dep} and exponential integrability estimate \eqref{exp-u}. 
\qed

\subsection{Stability of Splitting Process}
\label{sec-spl-h2}

In this part, we prove that the splitting  process $u_\tau=\{u_\tau(t):\ t\in [0,T]\}$ defined by \eqref{spl} is well-defined and uniformly bounded in $\mathbb L^p(\Omega; \C([0,T]; \HH^2))$ for any $p\ge 1$. 

We start with the evolution of the charge and  the energy
of $u_\tau$, i.e., $\|u_\tau\|^2$ and $H(u_\tau):=\frac12 \|\nabla u_\tau\|^2-\frac\lambda4 \|u_\tau\|_{L_4}^4$.

\begin{prop}\label{cha-ene-spl}
The splitting process $u_\tau=\{u_\tau(t):\ t\in [0,T[\}$ is uniquely solvable and $\FFF_t$-measurable.
Moreover, for any $t\in [0,T]$ there holds a.s. that 
\begin{align}\label{cha-spl}
\|u_\tau(t)\|^2=\|\xi\|^2
\end{align}
and that 
\begin{align}\label{ene-spl}
H(u_\tau(t))
&=H(\xi)+\int_0^t \left\<\nabla u_\tau, \bi u_\tau d(\nabla W(r))\right\> 
\nonumber  \\
&\quad+\frac12 \sum_{k\in \N}\int_0^t \|u_\tau \nabla(Q^\frac12 e_k)\|^2 dr.
\end{align}
\end{prop}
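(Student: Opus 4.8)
The plan is to construct $u_\tau$ inductively over the subintervals $T_m$ by composing the two solution maps, and to push the classical conservation laws of the cubic Schr\"odinger equation through the splitting, supplementing the energy law with a single It\^o computation along the stochastic flow. First I would verify that each building block is a well-defined map on the phase space $\HH_0^1\cap\HH^2$. For the deterministic flow $\Phi^D_{m,\tau}$ this is the global well-posedness of the one-dimensional cubic NLS with homogeneous Dirichlet data: mass and energy conservation, together with the Gagliardo--Nirenberg inequality in the focusing case $\lambda=1$ (using $\OOO\subset\R$), yield a global $\HH_0^1$ bound, and propagation of regularity upgrades this to a continuous flow on $\HH^2$. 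For the stochastic flow the solution is the explicit multiplier $u^S_{\tau,m}(t)=\exp(-\bi(W(t)-W(t_m)))\,u^D_{\tau,m}(t_{m+1})$; since $Q^\frac12\in\LL_2^2$ gives $Q^\frac12 e_k\in\HH_0^1\cap\HH^2$ and the one-dimensional embedding $\HH^2(\OOO)\hookrightarrow W^{1,\infty}(\OOO)$ makes $\HH_0^1\cap\HH^2$ stable under multiplication by $\exp(-\bi W(t))$, this map sends $\HH_0^1\cap\HH^2$ into itself, is $\FFF_t$-adapted and $t$-continuous. Composing the two maps on each $T_m$ as in \eqref{spl} and inducting on $m\in\Z_{M-1}$ then shows $u_\tau$ is uniquely determined and $\FFF_t$-measurable, left-continuous with finite right-hand limits, the only discontinuities occurring at the nodes $t_m$ where the whole $D$-flow is inserted.

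For the charge identity \eqref{cha-spl} I would show separately that each flow preserves the $\mathbb L^2$-norm. Along $\Phi^D$, testing \eqref{nls-d} against $u^D_{\tau,m}$ in the real inner product $\<\cdot,\cdot\>$ gives $\tfrac{d}{dt}\|u^D_{\tau,m}\|^2=2\<u^D_{\tau,m},\bi(\Delta u^D_{\tau,m}+\lambda|u^D_{\tau,m}|^2u^D_{\tau,m})\>=0$, since an integration by parts (the boundary terms vanish by the Dirichlet condition) makes the tested quantity real, and $\<w,\bi w\>=0$. Along $\Phi^S$, the multiplier has modulus one pointwise, $|\exp(-\bi(W(t)-W(t_m)))|=1$, so in fact the entire modulus field $|u^S_{\tau,m}(t)|=|u^D_{\tau,m}(t_{m+1})|$ is frozen. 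Concatenating the two statements over the partition yields $\|u_\tau(t)\|^2=\|\xi\|^2$ for every $t$, a.s.

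The energy identity \eqref{ene-spl} rests on the same two mechanisms, now applied to $H$. The crucial structural observation is that the deterministic flow also conserves $H$: writing $f=\Delta u^D_{\tau,m}+\lambda|u^D_{\tau,m}|^2u^D_{\tau,m}$ one gets $\tfrac{d}{dt}H(u^D_{\tau,m})=-\<f,\bi f\>=0$, so the instantaneous insertion of the whole $D$-flow at each node $t_m$ leaves $H(u_\tau)$ continuous even though $u_\tau$ jumps there. Hence it suffices to track $H$ along the stochastic flow on each $T_m$ and to concatenate. Moreover, since the modulus $|u^S_{\tau,m}|$ is frozen along $\Phi^S$, the $\mathbb L^4$-part of $H$ is constant along $\Phi^S$, so $dH(u^S_{\tau,m})=d\big(\tfrac12\|\nabla u^S_{\tau,m}\|^2\big)$ and everything reduces to an It\^o computation for the gradient.

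To evaluate $d(\tfrac12\|\nabla u^S_{\tau,m}\|^2)$ I would pass \eqref{nls-s} to its It\^o form, $du^S=-\bi u^S\,dW-\tfrac12\big(\sum_k(Q^\frac12 e_k)^2\big)u^S\,dt$, differentiate in space — the spatial dependence of the noise producing, alongside $-\bi\nabla u^S\,dW$, the cross term $-\bi u^S\,d(\nabla W)$ in $d(\nabla u^S)$ — and apply It\^o's formula to the quadratic functional $\tfrac12\|\nabla u^S\|^2$. The martingale part collapses (the $|\nabla u^S|^2\,dW$ piece is purely imaginary and drops under $\Re$) to the stochastic integral against $d(\nabla W)$ in \eqref{ene-spl}, while the drift is the sum of the It\^o correction and the quadratic variation of $d(\nabla u^S)$. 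Here lies the one delicate cancellation: the quadratic variation produces $\tfrac12\sum_k\|\nabla u^S\,Q^\frac12 e_k\|^2$, a cross term $\sum_k\<\nabla u^S,u^S\,\nabla(Q^\frac12 e_k)\,Q^\frac12 e_k\>$, and $\tfrac12\sum_k\|u^S\nabla(Q^\frac12 e_k)\|^2$; the two drift contributions coming from the correction $-\tfrac12(\sum_k(Q^\frac12 e_k)^2)u^S$ — namely $-\tfrac12\sum_k\|\nabla u^S\,Q^\frac12 e_k\|^2$ and exactly minus the cross term — annihilate the first two, leaving precisely $\tfrac12\sum_k\|u^S\nabla(Q^\frac12 e_k)\|^2$, the claimed It\^o correction. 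Summing the per-interval identities over $m$ and using that $H$ does not jump at the nodes assembles the single integral in \eqref{ene-spl}. I expect the main obstacles to be, first, rigorously justifying It\^o's formula for the $\HH^1$-functional $H$ along the infinite-dimensional multiplicative-noise flow, which requires the pathwise $\HH^2$ control of $u^S_{\tau,m}$ from the first step together with a localization (or Galerkin truncation) argument, and second, the bookkeeping of the cancellation above, since it is exactly the interplay between the Stratonovich correction and the gradient of the spatially correlated noise that turns the naive drift into the clean quadratic form.
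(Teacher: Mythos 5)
Your proposal is correct and follows essentially the same route as the paper: treat each subinterval by combining the deterministic conservation laws of the NLS flow \eqref{nls-d} (which the paper obtains by viewing it as Eq. \eqref{nls} with $Q=0$), the explicit modulus-preserving multiplier solution of \eqref{nls-s} for the charge, an It\^o computation for $H$ along the stochastic flow, and iteration over the nodes $t_m$. Your additional observations — that the $\mathbb L^4$-part of $H$ is pointwise frozen along $\Phi^S$ so only $\tfrac12\|\nabla u^S\|^2$ needs It\^o's formula, and the explicit cancellation between the It\^o correction and the quadratic variation leaving $\tfrac12\sum_{k}\|u^S\nabla(Q^{\frac12}e_k)\|^2$ — are exactly the details the paper's terse "By It\^o formula" conceals, and they check out.
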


\textbf{Proof}
Let $m\in \Z_M$ and $t\in T_m$.
Since Eq. \eqref{nls-d} can be seen as a special equation of \eqref{nls} with $Q=0$,  $u_\tau^D(t)$ is uniquely solvable and $\FFF_{t_m}$-measurable by Theorem \ref{well}.
Moreover, 
\begin{align}\label{cha-ene-ud}
\|u_\tau^D(t)\|^2=\|u_{\tau,m}^D(t_m)\|^2,\quad 
H(u_\tau^D(t))=H(u_{\tau,m}^D(t_m))\quad \text{a.s.}
\end{align}
On the other hand, Eq. \eqref{nls-s} has an $\FFF_t$-measurable analytic solution given by $u_\tau^S(t)=\exp\left(-\bi (W(t)-W(t_m))\right) u_{\tau,m}^S(t_m)$,
and thus $u_\tau^S(t)$ preserves the modular length as well as the charge, i.e.,  
\begin{align}\label{cha-us}
|u_\tau^S(t)|=|u_{\tau,m}^S(t_m)|,\quad
\|u_\tau^S(t)\|^2=\|u_{\tau,m}^S(t_m)\|^2.
\end{align}
Then $u_\tau$ is uniquely solvable and $\FFF_t$-measurable and 
\begin{align*}
\|u_\tau(t)\|^2=\|u_\tau(t_m)\|^2,
\end{align*}
from which we obtain \eqref{cha-spl} by iterations on $m$.
By It\^o formula, we have 
\begin{align}\label{ene-us}
H(u_\tau(t))
&=H(u_\tau(t_m))+\int_{t_m}^t \left\<\nabla u_\tau,\bi u_\tau d(\nabla W(r)) \right\>  \nonumber \\  
&\quad+\frac12 \sum_{k\in \N}\int_{t_m}^t 
\|u_\tau \nabla(Q^\frac12 e_k)\|^2 dr.
\end{align}
Combining \eqref{cha-ene-ud} and \eqref{ene-us}, we obtain \eqref{ene-spl} by iterations. 
\qed

The above charge conservation law \eqref{cha-spl} and energy evolution \eqref{ene-spl} imply the following boundedness in $\mathbb L^p(\Omega; \C([0,T]; \HH^1))$ for any $p\ge 1$.

\begin{cor}\label{h1-sta}
For any $p\ge 1$, there exists a constant $C=C(\xi,Q,T, p)$ such that
\begin{align}\label{h1-sta0}
\E\left[\sup_{t\in [0,T]}\|u_\tau(t)\|^p_{\HH^1}\right]\le C.
\end{align}
\end{cor}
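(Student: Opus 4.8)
The plan is to reduce \eqref{h1-sta0} to a moment bound on the gradient and then close a self-improving estimate driven by the energy evolution \eqref{ene-spl}. Since the charge is conserved, $\|u_\tau(t)\|^2=\|\xi\|^2$ by \eqref{cha-spl}, we have $\|u_\tau(t)\|^2_{\HH^1}=\|\xi\|^2+\|\nabla u_\tau(t)\|^2$, so \eqref{h1-sta0} is equivalent to a uniform bound on $\E[\sup_{t\in[0,T]}\|\nabla u_\tau(t)\|^p]$. It suffices to treat $p\ge 2$, the case $1\le p<2$ following from Jensen's inequality on the probability space $\Omega$.

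First I would bound $\|\nabla u_\tau\|^2$ by the energy. From $H(v)=\frac12\|\nabla v\|^2-\frac\lambda4\|v\|_{L_4}^4$ one gets $\|\nabla u_\tau\|^2=2H(u_\tau)+\frac\lambda2\|u_\tau\|_{L_4}^4$. In the defocusing case $\lambda=-1$ this already yields $\|\nabla u_\tau\|^2\le 2H(u_\tau)$. In the focusing case $\lambda=1$ I would invoke the one-dimensional Gagliardo--Nirenberg inequality $\|v\|_{L_4}^4\le C\|v\|^3\|\nabla v\|$ together with the conserved charge and Young's inequality to absorb the quartic term, namely $\frac12\|u_\tau\|_{L_4}^4\le C\|\xi\|^3\|\nabla u_\tau\|\le \frac14\|\nabla u_\tau\|^2+C'\|\xi\|^6$. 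In both cases this produces a pointwise bound $\|\nabla u_\tau(t)\|^2\le C_1 H(u_\tau(t))+C_2$ with constants depending only on $\xi$.

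Next I would control $\sup_t H(u_\tau(t))$ through \eqref{ene-spl}, writing $H(u_\tau(t))=H(\xi)+M(t)+V(t)$ with the martingale part $M(t)=\int_0^t\<\nabla u_\tau,\bi u_\tau\,d(\nabla W(r))\>$ and the drift $V(t)=\frac12\sum_{k\in\N}\int_0^t\|u_\tau\nabla(Q^\frac12 e_k)\|^2\,dr$. Using the one-dimensional Sobolev embedding $\HH^1\hookrightarrow L^\infty$ together with $\|\nabla(Q^\frac12 e_k)\|_{\HH^1}\le\|Q^\frac12 e_k\|_{\HH^2}$ and the conserved charge, one has $\|u_\tau\nabla(Q^\frac12 e_k)\|^2\le\|\xi\|^2\|\nabla(Q^\frac12 e_k)\|_{L^\infty}^2$, so that $V(t)\le C\|\xi\|^2\|Q^\frac12\|_{\LL_2^2}^2\,T=:C_V$ is deterministic. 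Hence $\sup_{s\le t}H(u_\tau(s))\le C+\sup_{s\le t}|M(s)|$ and therefore $\sup_{s\le t}\|\nabla u_\tau(s)\|^2\le C+C_1\sup_{s\le t}|M(s)|$. Raising to the power $p/2$, taking expectations, and applying the Burkholder--Davis--Gundy inequality gives $\E[\sup_{s\le t}|M(s)|^{p/2}]\le C\E[\<M\>_t^{p/4}]$, where by Cauchy--Schwarz and the same Sobolev bound the quadratic variation obeys $\<M\>_t=\sum_{k\in\N}\int_0^t\<\nabla u_\tau,\bi u_\tau\nabla(Q^\frac12 e_k)\>^2\,dr\le C\int_0^t\|\nabla u_\tau(r)\|^2\,dr$.

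Finally I would close the estimate. Setting $G(t):=\E[\sup_{s\le t}\|\nabla u_\tau(s)\|^p]$ and bounding $\int_0^t\|\nabla u_\tau\|^2\,dr\le t\sup_{s\le t}\|\nabla u_\tau(s)\|^2$, the steps above combine into $G(t)\le C+C\,T^{p/4}\big(G(t)\big)^{1/2}$, a quadratic inequality that forces $G(t)\le C^*$ uniformly on $[0,T]$ once $G(t)$ is known to be finite. The main obstacle is exactly this circularity: the control of $\sup|M|$ reintroduces $\sup\|\nabla u_\tau\|$, so the inequality cannot be applied before finiteness is established. I would resolve it by a standard localization, introducing the stopping times $\theta_R=\inf\{t:\|\nabla u_\tau(t)\|\ge R\}$, which increase to $T$ almost surely because $u_\tau$ is built from finitely many steps each lying in $\HH^1$ and hence has an a.s. finite $\HH^1$-norm on $[0,T]$. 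Running the whole argument with $t$ replaced by $t\wedge\theta_R$ yields $G_R(t)\le R^p<\infty$ and the constant $C^*$ independent of $R$; letting $R\to\infty$ and invoking Fatou's lemma then delivers \eqref{h1-sta0}.
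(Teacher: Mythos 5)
Your proof is correct, but it closes the estimate by a genuinely different mechanism than the paper. Both arguments share the same core ingredients: charge conservation \eqref{cha-spl}, the Gagliardo--Nirenberg relation $\|\nabla v\|^2\le C_1H(v)+C_2$ with $C_2$ depending only on the conserved charge, the energy identity \eqref{ene-spl}, and a Burkholder--Davis--Gundy bound on the martingale part, whose quadratic variation is at most $C\int_0^t\|\nabla u_\tau(r)\|^2\,dr$ because charge conservation makes the coefficient $\|\xi\|^2\|Q^{\frac12}\|_{\LL_2^2}^2$ deterministic. The paper, however, proceeds in two stages: it first applies It\^o's formula to $H^{\frac p2}(u^S_{\tau,m})$ on each subinterval $T_m$, runs Gronwall there, and uses the energy conservation of the deterministic flow to iterate across the $M$ subintervals, obtaining the pointwise-in-time moment bound \eqref{h1-sta1}; only then does it invoke \eqref{ene-spl} and BDG, where the quadratic variation is controlled by a time integral of the already-established pointwise moments, so no circularity ever arises. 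You instead bound the quadratic variation crudely by $T\sup_{s\le t}\|\nabla u_\tau(s)\|^2$, arrive at the self-improving inequality $G\le C+C'G^{1/2}$, and break the circularity by localization with the stopping times $\theta_R$ plus Fatou. Your route is shorter: it never applies It\^o's formula to powers of the energy and needs no interval-by-interval iteration, since the pathwise inequality is raised to the power $\frac p2$ only at the end; and the localization is legitimate, because $u_\tau$ is left-continuous (so $\sup_{s\le\theta_R}\|\nabla u_\tau(s)\|\le R$) and its pathwise $\HH^1$-norm is a.s. finite, being built from finitely many well-posed flow steps, so $\theta_R$ a.s. exceeds $T$ for large $R$. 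What the paper's two-stage route buys is the intermediate pointwise estimate \eqref{h1-sta1} itself, which is reused later (notably in the proof of Proposition \ref{h2-sta}), and a Gronwall-iteration template repeated for the $\HH^2$ Lyapunov functional and the discrete schemes; your sup-in-time bound implies the pointwise bound, so mathematically nothing is lost.
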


\textbf{Proof}
Let $t\in T_m$ for $m\in \Z_M$ and $p\ge 4$.
Applying  It\^o formula and using the energy evolution law \eqref{ene-us} of $u^S_\tau$, we obtain
\begin{align*}
H^{\frac p2}(u_{\tau,m}^S(t))
&=H^{\frac p2}(u_{\tau,m}^S(t_m))
+\frac p4\sum_{k\in \N} \int_{t_m}^t H^{{\frac p2}-1}(u_{\tau,m}^S) 
\|u_{\tau,m}^S \nabla(Q^\frac12 e_k)\|^2 dr \\
&\quad +\frac p4\int_{t_m}^t H^{{\frac p2}-1}(u_{\tau,m}^S) 
\left\<\nabla u_{\tau,m}^S,\bi u_{\tau,m}^S d(\nabla W(r)) \right\>  \\ 
&\quad+\frac{p(p-2)}8\sum_{k\in \N} \int_{t_m}^t H^{{\frac p2}-2}(u_{\tau,m}^S) 
\left\<\nabla u_{\tau,m}^S,\bi u_{\tau,m}^S \nabla (Q^\frac12 e_k) \right\> dr.
\end{align*}
Taking expectations on both sides of the above equality, using H\"older and Gagliardo--Nirenberg inequalities, we get
\begin{align*}
\E \left[H^{\frac p2} (u_{\tau,m}^S(t))\right]
&\le \E \left[H^{\frac p2} (u_{\tau}(t_m))\right]
+C\int_{t_m}^t \left(1+\E \left[H^{\frac p2} (u_{\tau,m}^S(s))\right] \right)ds.
\end{align*}
Gronwall inequality yields that 
\begin{align}\label{ene-p}
\E \left[H^{\frac p2}(u_{\tau,m}^S(t))\right]
\le e^{C\tau} \left(\E \left[H^{\frac p2}(u_\tau(t_m))\right]+C\tau \right).
\end{align}
Using the energy conservation law of  Eq. \eqref{nls-d} and substituting iteratively the estimation \eqref{ene-p}  for $\E \left[H^{\frac p2}(u_\tau(t_m))\right]$ in $T_{m-1}$, we have
	\begin{align}\label{ite-hp}
\E \left[H^{\frac p2}(u_\tau(t))\right]
&\le (e^{C\tau})^2 \E\left[H^{\frac p2}(u_\tau(t_{m-1})) \right]+C\tau(1+e^{C\tau})
\le \cdots \nonumber \\
&\le (e^{C\tau})^{m+1} H^{\frac p2}(\xi)+C\tau\sum_{k=0}^m (e^{C\tau})^k \nonumber \\
&\le e^{CT}H^{\frac p2}(\xi)+C\tau\frac{e^{CT}-1}{e^{C\tau}-1}
\le e^{CT}(1+H^{\frac p2}(\xi)).
\end{align}
Therefore,
\begin{align}\label{h1-sta1}
	\sup_{t\in [0,T]}\E\left[\|u_\tau(t)\|^p_{\HH^1}\right]\le C.
	\end{align}
By	\eqref{ene-spl}, we have that 
		\begin{align*}
		\sup_{t\in [0,T]}H(u_\tau(t))
		&\le H(\xi)+\sup_{t\in [0,T]}\left|\int_0^t \left\<\nabla u_\tau(r), \bi u_\tau(r) dW(r)\right\>\right|\\
		&\quad +\frac12 \sum_{k\in \N}\int_0^T \|
		u_\tau(r) \nabla(Q^\frac12 e_k)\|^2 dr.
		\end{align*}
By Burkholder--Davis--Gundy inequality and charge conservation law \eqref{cha-spl}, we get 
 \begin{align*}
\left\|\sup_{t\in [0,T]}\left|\int_0^t \left\<\nabla u_\tau(r), \bi u_\tau(r) dW(r)\right\>\right|\right\|_{\mathbb L^{\frac p2}(\Omega)}
\le C \sup_{t\in [0,T]}\|\nabla u_\tau(r)\|_{\mathbb L^{\frac p2}(\Omega;\mathbb L^2)},
\end{align*}
which is bounded due to \eqref{h1-sta1}.
This in turn implies \eqref{h1-sta0} for $p\ge 4$.
The estimation of \eqref{h1-sta0} for $p\in [1,4)$ follows by H\"older inequality. 
\\\qed

Similar  arguments yields the boundedness of $u_\tau^D$.
\begin{cor}\label{h1-ud}
 The auxiliary process $u_\tau^D$ is the right continuous and $\FFF_t$-adapted.
Moreover,for any $p\ge 1$, there exists a constant $C=C(\xi,Q,T, p)$ such that
	\begin{align}\label{h1-ud0}
	\E\left[\sup_{t\in [0,T]}\|u_\tau^D(t)\|^p_{\HH^1}\right]\le C.
	\end{align}
\end{cor}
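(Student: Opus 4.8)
The plan is to reduce the claim entirely to the stability estimate \eqref{h1-sta0} for $u_\tau$ already obtained in Corollary \ref{h1-sta}, exploiting that on each subinterval the auxiliary process $u_\tau^D$ is nothing but the deterministic flow of Eq. \eqref{nls-d} issued from $u_\tau(t_m)$. Indeed, for $t\in[t_m,t_{m+1})$ we have $u_\tau^D(t)=\Phi_{m,t-t_m}^D u_\tau(t_m)$, where $\Phi_{m,\cdot}^D$ is a deterministic (pathwise) solution operator. Since $u_\tau(t_m)$ is $\FFF_{t_m}$-measurable by Proposition \ref{cha-ene-spl} and $\Phi_{m,t-t_m}^D$ carries no randomness, $u_\tau^D(t)$ is $\FFF_{t_m}\subseteq\FFF_t$-measurable, hence $\FFF_t$-adapted; and because the solution map of Eq. \eqref{nls-d} is continuous in time with $\Phi_{m,0}^D=\mathrm{Id}$, the map $t\mapsto u_\tau^D(t)$ is continuous on each $[t_m,t_{m+1})$ and satisfies $u_\tau^D(t)\to u_\tau(t_m)=u_\tau^D(t_m)$ as $t\downarrow t_m$, which gives the asserted right continuity.

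For the bound \eqref{h1-ud0}, I would first record that Eq. \eqref{nls-d} conserves both charge and energy, so by \eqref{cha-ene-ud} and the charge conservation \eqref{cha-spl} of $u_\tau$ one has, for every $t\in[t_m,t_{m+1})$,
\[
\|u_\tau^D(t)\|^2=\|u_\tau(t_m)\|^2=\|\xi\|^2,\qquad H(u_\tau^D(t))=H(u_\tau(t_m))\quad\text{a.s.}
\]
Writing $\|u_\tau^D\|_{\HH^1}^2=\|u_\tau^D\|^2+\|\nabla u_\tau^D\|^2$ and using $\|\nabla u_\tau^D\|^2=2H(u_\tau^D)+\tfrac{\lambda}{2}\|u_\tau^D\|_{L_4}^4$ together with the Gagliardo--Nirenberg inequality $\|u_\tau^D\|_{L_4}^4\le C\|u_\tau^D\|^3\|\nabla u_\tau^D\|$ and Young's inequality to absorb the gradient term, I obtain a pathwise bound of the form $\|\nabla u_\tau^D(t)\|^2\le C(1+H(u_\tau(t_m)))$ that is uniform in $t$ over the subinterval. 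Consequently,
\[
\sup_{t\in[0,T]}\|u_\tau^D(t)\|_{\HH^1}^2\le C\Big(1+\max_{m\in\Z_M}H(u_\tau(t_m))\Big)\le C\Big(1+\sup_{t\in[0,T]}H(u_\tau(t))\Big).
\]
Raising this to the power $p/2$, taking expectations, and bounding $H(u_\tau)\le C(1+\|u_\tau\|_{\HH^1}^4)$ (again via Gagliardo--Nirenberg), the right-hand side is controlled by $C(1+\E[\sup_{t\in[0,T]}\|u_\tau(t)\|_{\HH^1}^{2p}])$, which is finite by Corollary \ref{h1-sta}; this yields \eqref{h1-ud0} for every $p\ge1$.

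The step that deserves the most care is the uniform-in-time $\HH^1$-bound within each subinterval, and here the genuine simplification over Corollary \ref{h1-sta} is that no It\^o calculus, Gronwall argument, or Burkholder--Davis--Gundy estimate is needed: because the deterministic flow conserves the energy exactly, $H(u_\tau^D(t))$ is constant on each $[t_m,t_{m+1})$, so the supremum over $t$ collapses to the finite family of grid values $H(u_\tau(t_m))$, which is dominated by $\sup_{t}H(u_\tau(t))$. The only analytic input is the Gagliardo--Nirenberg/Young manipulation converting the conserved charge and energy into an $\HH^1$-bound; in the defocusing case ($\lambda=-1$) even this is immediate since $\|\nabla u_\tau^D\|^2\le 2H(u_\tau^D)$, while in the focusing case ($\lambda=1$) one needs the subcritical absorption just described. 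Modulo this routine estimate, the corollary follows directly from \eqref{h1-sta0}.
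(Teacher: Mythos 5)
Your proof is correct, and it takes a genuinely more economical route than the one the paper gestures at. The paper disposes of this corollary with the single phrase ``Similar arguments yields the boundedness of $u_\tau^D$,'' i.e., it intends a rerun of the proof of Corollary \ref{h1-sta}: It\^o formula applied to $H^{p/2}$ along stochastic segments, a Gronwall step, iteration over the grid, and a Burkholder--Davis--Gundy estimate for the supremum. You avoid all of that machinery by observing that $u_\tau^D$ restricted to $[t_m,t_{m+1})$ is the purely deterministic flow of \eqref{nls-d} started from $u_\tau(t_m)$, so charge and energy are \emph{exactly} conserved there (this is \eqref{cha-ene-ud} combined with \eqref{cha-spl}); hence $\sup_t H(u_\tau^D(t))$ collapses pathwise to $\max_{m} H(u_\tau(t_m))$, and the Gagliardo--Nirenberg/Young absorption converts this, together with the conserved charge, into the pathwise bound $\sup_t\|u_\tau^D(t)\|_{\HH^1}^2\le C\bigl(1+\sup_t\|u_\tau(t)\|_{\HH^1}^4\bigr)$ a.s., after which \eqref{h1-sta0} applied with exponent $2p$ finishes the moment estimate. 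What this buys is a reduction to the \emph{statement} of Corollary \ref{h1-sta} rather than a repetition of its \emph{method}: no It\^o calculus, Gronwall, or BDG is needed for $u_\tau^D$ itself, and the argument makes transparent that all the stochastic difficulty is already encapsulated in the bound for $u_\tau$. Your adaptedness/right-continuity argument is likewise correct and matches what the paper implicitly uses: $u_\tau(t_m)$ is $\FFF_{t_m}$-measurable by Proposition \ref{cha-ene-spl}, the solution operator $\Phi^D_{m,\cdot}$ is deterministic and continuous in time with $\Phi^D_{m,0}=\mathrm{Id}$, so $u_\tau^D$ is $\FFF_t$-adapted, continuous on each $[t_m,t_{m+1})$, and right-continuous at the grid points.

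One cosmetic point: in the focusing case the intermediate inequality $\|\nabla u_\tau^D(t)\|^2\le C\bigl(1+H(u_\tau(t_m))\bigr)$ should be stated with $H_+$ or $|H|$, since the energy may be negative and a nonnegative quantity cannot be bounded by a possibly negative right-hand side. This is harmless here because your final step dominates $H$ by $C\bigl(1+\|u_\tau\|_{\HH^1}^4\bigr)\ge 0$ anyway, but the intermediate display should be adjusted accordingly.
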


Now we can show the $\HH^2$-a priori estimate of $u_\tau$ through the evolution of the Lyapunov functional
\begin{align}\label{h2-lya}
f(u):=\|\Delta u\|^2+\lambda\<\Delta u, |u|^2u\>.
\end{align}

\begin{prop}\label{h2-sta}
For any $p\ge 1$, there exists a constant $C=C(\xi,Q,T, p)$ such that
\begin{align}\label{h2-sta0}
\E\left[\sup_{t\in [0,T]}\|u_\tau(t)\|^p_{\HH^2}\right]\le C.
\end{align}
\end{prop}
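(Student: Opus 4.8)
The plan is to control the Lyapunov functional $f(u_\tau)$ from \eqref{h2-lya}, which is comparable to $\|u_\tau\|_{\HH^2}^2$, by following its evolution separately along the deterministic flow of \eqref{nls-d} and the stochastic flow of \eqref{nls-s}, and then iterating over the subintervals $T_m$ in the spirit of the proof of Corollary \ref{h1-sta}.

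\textbf{Reduction.} First I would record the two-sided bound relating $f$ to the $\HH^2$-norm. Since $\OOO=(0,1)$, the embedding $\HH^1\hookrightarrow L^\infty$ and the Gagliardo--Nirenberg inequality give $\||u|^2u\|\le C\|u\|_{\HH^1}^3$, whence
\begin{align*}
\tfrac12\|\Delta u\|^2-C\|u\|_{\HH^1}^6
\le f(u)
\le \tfrac32\|\Delta u\|^2+C\|u\|_{\HH^1}^6 .
\end{align*}
Together with elliptic regularity for the homogeneous Dirichlet Laplacian and the charge conservation \eqref{cha-spl}, this shows $\|u_\tau\|_{\HH^2}^2\le C(1+f(u_\tau)+\|u_\tau\|_{\HH^1}^6)$, so that — the $\HH^1$-moments being already bounded by Corollary \ref{h1-sta} — it suffices to bound $\E[\sup_{t}(1+f(u_\tau(t)))^{p/2}]$, replacing $f$ by a nonnegative equivalent functional if needed.

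\textbf{Deterministic flow.} On each $T_m$, Eq. \eqref{nls-d} is Eq. \eqref{nls} with $Q=0$, so I would differentiate $f(u_\tau^D)$ along the deterministic cubic NLS. After integration by parts the highest-order term $\|\nabla\Delta u_\tau^D\|^2$, and the other genuinely top-order contributions, carry a real coefficient and are annihilated by the factor $\bi$ in $u_t=\bi(\Delta u+\lambda|u|^2u)$; the surviving terms are dominated, via Gagliardo--Nirenberg and the conserved charge, by $C(1+\|\nabla u_\tau^D\|)\|\Delta u_\tau^D\|^2$ plus lower-order quantities. Since the energy and charge of \eqref{nls-d} are conserved, $\|\nabla u_\tau^D(t)\|$ stays equal to $\|\nabla u_\tau(t_m)\|$ throughout $T_m$, so Gronwall over $[t_m,t_{m+1}]$ yields
\begin{align*}
f(u_\tau^D(t_{m+1}))
\le e^{C(1+\|\nabla u_\tau(t_m)\|)\tau}\big(f(u_\tau(t_m))+C\tau\big).
\end{align*}
I expect the delicate point here to be the cancellation of the top-order terms, and in particular checking that the boundary contributions produced by repeatedly integrating by parts against $\Delta^2$ under only homogeneous Dirichlet data do not obstruct the estimate.

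\textbf{Stochastic flow and iteration.} On $T_m$ the flow of \eqref{nls-s} is the explicit multiplier $u_\tau^S(t)=\exp(-\bi(W(t)-W(t_m)))u_\tau^D(t_{m+1})$, and from
\begin{align*}
\Delta(e^{-\bi W}v)=e^{-\bi W}\big(\Delta v-2\bi\nabla W\cdot\nabla v-\bi\Delta W\,v-|\nabla W|^2v\big)
\end{align*}
Itô's formula applied to a power of $f(u_\tau^S)$ produces a martingale together with a drift in which each term carries at least one factor $\nabla(Q^\frac12 e_k)$ or $\Delta(Q^\frac12 e_k)$; the assumption $Q^\frac12\in\LL_2^2$ bounds this drift by $C(1+f(u_\tau^S))$ with a \emph{deterministic} constant, so this half-step contributes only a factor $e^{C\tau}$ as in \eqref{ene-p}. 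Composing the two half-steps and iterating over $m$ exactly as in \eqref{ite-hp} produces a random Gronwall factor of the type $\exp(C\int_0^T(1+\|\nabla u_\tau\|)\,dr)$. The main obstacle is precisely this super-linear coupling between the $\HH^1$- and $\HH^2$-norms: it cannot be closed by the polynomial $\HH^1$-moments alone, and I would resolve it by invoking the $\HH^1$-exponential integrability of $u_\tau$ — obtained from the energy evolution \eqref{ene-spl} together with the exponential integrability lemma of \cite{CHJ13,CHL16b} — and Hölder's inequality to split off the exponential factor, whose moments are then finite. This gives $\sup_{t}\E[\|u_\tau(t)\|_{\HH^2}^p]\le C$; finally, bounding the martingale term by the Burkholder--Davis--Gundy inequality together with \eqref{h1-sta1} moves the supremum inside the expectation, and Hölder's inequality covers the range $p\in[1,4)$.
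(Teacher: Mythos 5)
Your proposal is correct in substance, but it takes a genuinely different route from the paper at the key Gronwall step, and the difference is instructive. The paper never uses exponential integrability in this proof: it takes expectations first and bounds the drift of $f(u_\tau^D)$ by the sharp one-dimensional Gagliardo--Nirenberg inequality combined with Young's inequality, e.g. $|\<\Delta u, |\nabla u|^2 u\>|\le C\|\Delta u\|^{3/2}\|u\|_{\HH^1}^{5/2}\le \epsilon \|\Delta u\|^2 + C\|u\|_{\HH^1}^{10}$, so that the coefficient multiplying $f$ in the Gronwall argument is a \emph{deterministic} constant and the inhomogeneity is controlled by the polynomial $\HH^1$-moments of Corollaries \ref{h1-sta} and \ref{h1-ud} alone (this is where the exponent $10$ in the paper's estimate comes from); the same structure handles the stochastic half-step, giving \eqref{fud} and \eqref{fus}, and the iteration \eqref{ite-hp} closes with purely polynomial input, exponential integrability being reserved for the convergence theorem. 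Your route instead keeps the deterministic half-step pathwise with a \emph{random} Gronwall coefficient $C(1+\|\nabla u_\tau(t_m)\|)$ and then pays for it with Lemma \ref{exp-ut} and H\"older. This does close, for two reasons you should make explicit: first, there is no circularity, because the proof of the $\HH^1$-exponential integrability uses only the energy functional $H$, the conservation laws, and Lemma \ref{exp-int}, not the $\HH^2$ bound being proved; second, and more delicately, your argument survives only because the random coefficient is \emph{linear} in $\|\nabla u_\tau\|$, so that Young's inequality can fit $pC\int_0^T\|\nabla u_\tau\|\,dr$ under the fixed weight $e^{-\alpha t}$ appearing in \eqref{exp-ut0} for every $p$. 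The most natural crude interpolation (via $\|\nabla u\|_{L^4}^2\le \|\nabla u\|_{L^\infty}\|\nabla u\|$) produces a coefficient proportional to $\|\nabla u\|^2$, and with a quadratic exponent the fixed-weight exponential integrability cannot absorb an arbitrary constant multiple, so the argument would break for large $p$. In other words, your ``main obstacle'' (the super-linear $\HH^1$--$\HH^2$ coupling) is an artifact of a non-sharp estimate: once Young's inequality is applied with the correct exponents, the coupling disappears and the polynomial moments suffice, which is exactly what the paper exploits. Your reduction of $f$ to the $\HH^2$-norm, the treatment of the stochastic half-step, and the final BDG/H\"older steps for moving the supremum inside and covering $p\in[1,4)$ all match the paper.
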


\textbf{Proof}
Let $t\in T_m$ for some $m\in \Z_M$ and $p=2$.
By the same arguments in \cite[Theorem 2.1]{CHL16b} (see (2.5) and (2.8) in \cite{CHL16b} with $Q=0$), we have  
\begin{align*}
&\E\left[f(u_\tau^D(t))\right]-\E\left[f(u_\tau^D(t_m)) \right] \\
&=\int_{t_m}^{t} \E\left[\left\<\Delta u_{\tau,m}^D(r),\bi |u_{\tau,m}^D(r)|^4u_{\tau,m}^D(r)\right\>\right] dr\\
&\quad+\lambda \int_{t_m}^{t} \E\left[ \left\<\Delta u_{\tau,m}^D(r),
4\bi |\nabla  u_{\tau,m}^D(r)|^2 u_\tau^D(r)
+2\bi (\nabla  u_{\tau,m}^D(r))^2  \overline{u_{\tau,m}^D(r)}\right\>\right] dr \\
&\le C\Big(1+\sup_{t\in [0,T]}\E\left[ \|u_\tau^D(t)\|_{\HH^1}^{10}\right] \Big) \tau
+C\int_{t_m}^t \E\left[ f(u_{\tau,m}^D(r))\right] dr.
\end{align*}
Combined with the relationship between $H(u_\tau^D)$ and $\|u_\tau^D(t)\|_{\HH^1}$, the a priori estimate \eqref{h1-sta0}  and Gronwall inequality imply that 
\begin{align}\label{fud}
\E\left[f(u_{\tau,m}^D(t))\right]
&\le e^{C\tau}\E\left[f(u_{\tau,m}^D(t_m)) \right]+C\tau.
\end{align}
On the other  hand, applying It\^o formula to $f(u_\tau)$, we obtain 
\begin{align*}
&f(u_\tau(t))-f(u_\tau^D(t_{m+1})) \\
&=2\int_{t_m}^{t}\left\<\Delta u_{\tau,m}^S, 
\Delta\left(-\bi u_{\tau,m}^S dW(r)
-\frac12u_{\tau,m}^S F_Q dr\right) \right\> \\
&\quad +\lambda\int_{t_m}^{t}\left\<\Delta u_{\tau,m}^S,
|u_{\tau,m}^S|^2\Big(-\bi u_{\tau,m}^S dW(r)-\frac12u_{\tau,m}^S F_Q dr\Big)\right\>  \\
&\quad+\lambda\int_{t_m}^{t}\left\<\Delta\Big(-\bi u_{\tau,m}^S dW(r)-\frac12u_\tau^S F_Q dr\Big), |u_{\tau,m}^S|^2u_{\tau,m}^S \right\>  \\
&\quad +2\lambda \int_{t_m}^{t} \left\< \Delta (-\bi u_{\tau,m}^S Q^{\frac12}e_k ), -\bi |u_{\tau,m}^S|^2u_{\tau,m}^S Q^{\frac12}e_k\right\>dr \\
&\quad+\lambda\int_{t_m}^{t}\left\<\Delta u_{\tau,m}^S, -|u_{\tau,m}^S|^2 u_{\tau,m}^S F_Q \right\> dr
+\int_{t_m}^{t}\sum_{k\in \N} \|\Delta (u_{\tau,m}^S Q^{\frac12}e_k)\|^2 dr.
\end{align*} 
Taking expectation and using H\"older and Gagliardo--Nirenberg inequalities and the $\HH^1$-a priori estimate \eqref{h1-sta1}, we obtain 
\begin{align}\label{uts}
\E \left[f(u_\tau(t))\right]
\le \E \left[f(u_\tau^D(t_{m+1}))\right]
+C\int_{t_m}^t \left(1+\E\left[ f(u_{\tau,m}^S)\right]\right) dr.
\end{align}
Then Gronwall inequality implies that 
\begin{align}\label{fus}
\E \left[f(u_\tau(t))\right]
&\le e^{C\tau}\E\left[f(u_\tau^D(t_{m+1}))\right]+C\tau.
\end{align}
Similar iterative arguments to derive \eqref{ite-hp} applying to $\E \left[f(u_\tau^S(t))\right]$, combining with \eqref{fus}, yields 
\begin{align*}
\E \left[f(u_\tau(t))\right]
\le e^{CT}(1+f(\xi)).
\end{align*}
These estimations in turn show that 
\begin{align*}
\sup_{t\in[0,T]}\E \left[ \|u_\tau(t)\|_{\HH^2}^2\right]\le C.
\end{align*}
To derive \eqref{h2-sta0} for $p\ge 4$, one only need to apply It\^o formula to $f^\frac p2(u_\tau(t))$ and Burkholder--Davis--Gundy inequality to the stochastic integral as in Lemma \ref{h1-sta}.
The estimation of \eqref{h2-sta0} for $p\in [1,4)$ follows from H\"older inequality.
We omit the details here to avoid the tedious calculations.  
\qed

\begin{cor}\label{h2-ud}
For any $p\ge 1$, there exists a constant $C=C(\xi,Q,T, p)$ such that
\begin{align}\label{h2-ud0}
\E\left[\sup_{t\in [0,T]}\|u_\tau^D(t)\|^p_{\HH^2}\right]\le C.
\end{align}

\end{cor}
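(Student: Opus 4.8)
The plan is to follow the structure of the proof of Proposition \ref{h2-sta}, but to exploit that on each subinterval $u_\tau^D$ carries no stochastic integral, so that every estimate can be performed pathwise. For $t\in[t_m,t_{m+1})$ the process $u_\tau^D(t)=\Phi_{m,t-t_m}^D u_\tau(t_m)$ is the deterministic NLS flow started from the $\FFF_{t_m}$-measurable datum $u_\tau(t_m)$ (Proposition \ref{cha-ene-spl}); since $\Phi_{m,\cdot}^D$ is continuous in time and maps $\HH_0^1\cap\HH^2$ into itself by Theorem \ref{well} applied with $Q=0$, the map $t\mapsto u_\tau^D(t)$ is right-continuous and $\FFF_{t_m}\subset\FFF_t$-adapted, which settles the first assertion. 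Along this flow the charge and energy are conserved, $\|u_\tau^D(t)\|=\|u_\tau(t_m)\|$ and $H(u_\tau^D(t))=H(u_\tau(t_m))$ for $t\in T_m$, so that, combined with \eqref{cha-spl}, the $\HH^1$-bound \eqref{h1-ud0} of Corollary \ref{h1-ud} is available throughout.

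For the $\HH^2$-estimate I use the Lyapunov functional $f$ of \eqref{h2-lya}. By Gagliardo--Nirenberg and the one-dimensional Sobolev embedding one has, for some fixed exponent $c>0$, the bound $|\<\Delta u,|u|^2u\>|\le \frac12\|\Delta u\|^2+C\|u\|_{\HH^1}^{c}$, whence $\tfrac12\|\Delta u\|^2-C\|u\|_{\HH^1}^{c}\le f(u)\le \tfrac32\|\Delta u\|^2+C\|u\|_{\HH^1}^{c}$. In view of \eqref{h1-ud0}, controlling $\E[\sup_t f(u_\tau^D(t))^{p/2}]$ is therefore equivalent to controlling $\E[\sup_t\|u_\tau^D(t)\|_{\HH^2}^p]$. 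The evolution of $f(u_\tau^D)$ along \eqref{nls-d} is precisely the computation already carried out in the proof of Proposition \ref{h2-sta}: differentiating $f(u_\tau^D)$, using the identity for $\Delta(|u|^2u)$ and Gagliardo--Nirenberg together with \eqref{h1-ud0}, one obtains pathwise on $T_m$
\begin{align*}
\frac{d}{dt}f(u_\tau^D(t))\le C\left(1+\sup_{s\in T_m}\|u_\tau^D(s)\|_{\HH^1}^{10}\right)+C\,f(u_\tau^D(t)).
\end{align*}

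Gronwall's inequality applied pathwise, where no Burkholder--Davis--Gundy step is needed since there is no martingale term, yields the pathwise analogue of \eqref{fud}, namely $\sup_{t\in T_m}f(u_\tau^D(t))\le e^{C\tau}\big(f(u_\tau(t_m))+C\tau(1+\sup_{s\in T_m}\|u_\tau^D(s)\|_{\HH^1}^{10})\big)$. Maximizing over $m\in\Z_{M-1}$ and using $e^{C\tau}\le e^{CT}$ gives $\sup_{t\in[0,T]}f(u_\tau^D(t))\le e^{CT}\big(\sup_m f(u_\tau(t_m))+C(1+\sup_{t\in[0,T]}\|u_\tau^D(t)\|_{\HH^1}^{10})\big)$. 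Raising to the power $p/2$, taking expectations, and invoking Proposition \ref{h2-sta} to bound $\E[\sup_m f(u_\tau(t_m))^{p/2}]$ through $\E[\sup_t\|u_\tau(t)\|_{\HH^2}^p]$, together with \eqref{h1-ud0}, closes the estimate and yields \eqref{h2-ud0}. The only genuinely delicate point is that the initial datum $u_\tau(t_m)$ is random rather than deterministic, so the constants produced by the pathwise Gronwall argument depend on $\|u_\tau(t_m)\|_{\HH^1}$; one must keep these dependencies polynomial so that the terminal expectation remains finite, which is guaranteed by the uniform $\HH^1$- and $\HH^2$-moment bounds for $u_\tau$ already established.
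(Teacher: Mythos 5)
Your proof is correct, and it reaches \eqref{h2-ud0} by a genuinely different bookkeeping strategy than the paper. The paper's (one-line) proof runs the expectation-level one-step estimate \eqref{fud}, $\E[f(u_{\tau,m}^D(t))]\le e^{C\tau}\E[f(u_{\tau,m}^D(t_m))]+C\tau$, and then iterates across the partition exactly as in the proof of Proposition \ref{h2-sta} (alternating with the stochastic-step estimate \eqref{fus}), so it in effect re-derives the bound from scratch along the whole time horizon. You instead exploit that on each $T_m$ the flow is deterministic, so the evolution of $f$ and the Gronwall step hold pathwise with deterministic constants, and you then anchor the one-step bound on the already-established estimate $\E\left[\sup_{t\in[0,T]}\|u_\tau(t)\|_{\HH^2}^p\right]\le C$ of Proposition \ref{h2-sta} to control the random data $f(u_\tau(t_m))$; no iteration across intervals is needed. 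This is legitimate given the ordering of results (Proposition \ref{h2-sta} precedes the corollary), and it buys two things: the passage from $\sup_t\E$ to $\E\sup_t$ is immediate, since inside a subinterval there is no martingale term requiring a Burkholder--Davis--Gundy argument, and across subintervals the suprema are absorbed into Proposition \ref{h2-sta} and Corollary \ref{h1-ud}; in this respect your write-up is more explicit than the paper's. What the paper's route buys is self-containedness at the level of \eqref{fud}: it only reuses the iteration machinery already set up, rather than the full strength of the $\E\sup$ conclusion. Your closing caveat --- that the randomness enters only through $f(u_\tau(t_m))$ and $\sup_{s\in T_m}\|u_\tau^D(s)\|_{\HH^1}^{10}$, polynomially, with the Gronwall constant deterministic --- is exactly the point that makes the pathwise argument close, and it is correctly justified by the moment bounds you cite.
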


\textbf{Proof}
The estimations \eqref{h2-ud0} follows from \eqref{fud} and iterative arguments.
\qed

\subsection{Exponential Integrability of Splitting Process}
\label{sec-spl-exp}

In this part we prove the $\HH^1$-exponential integrability for $u_\tau=\{u_\tau(t):\ t\in [0,T]\}$.
This property is the key ingredient to derive the strong error estimate between $u_\tau$ and $u$. 

We recall a useful exponential integrability lemma. 

\begin{lm}\label{exp-int}
Let $\HH$ be a  separable Hilbert space, $U\in \C^2(\HH;\R)$, $\overline U$ be a functional in $\HH$ and $X$ be an $\HH$-valued, adapted stochastic process with continuous sample paths satisfying 
$\int_0^T\|\mu(X_s)\|+\|\sigma(X_s)\|^2 dr<\infty$ a.s., and for all $t\in [0,T]$, $X_t=X_0+\int_0^t \mu(X_s)dr +\int_0^t \sigma(X_s)dW_s$ a.s.
Assume that there exists an $\FFF_0$-measurable random variable $\alpha\in [0,\infty)$ such that a.s.
\begin{align*}
&DU(X)\mu(X)
+\frac{\textup{tr}\left[D^2U(X)\sigma(X)\sigma^*(X)\right]}2 \nonumber  \\
&\quad +\frac{\|\sigma^*(X) D U(X)\|^2}{2e^{\alpha t}}+\overline U(X)
\le \alpha U(X),
\end{align*}
then 
\begin{align*}	
\sup_{t\in [0,T]}\E\left[\exp\left( \frac {U(X_t)}{e^{\alpha t}}+\int_0^t\frac {\overline U(X_r)}{e^{\alpha s}}dr \right)\right]
\le \E\left[ e^{U(X_0)} \right].
\end{align*}
\end{lm}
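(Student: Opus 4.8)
The plan is to introduce the auxiliary process
\begin{align*}
Z_t := \frac{U(X_t)}{e^{\alpha t}} + \int_0^t \frac{\overline U(X_r)}{e^{\alpha r}}\,dr, \qquad t\in[0,T],
\end{align*}
and to show that $e^{Z_t}$ is a nonnegative supermartingale. Granting this, the conclusion is immediate: since $Z_0=U(X_0)$ we get $\E[e^{Z_t}]\le\E[e^{Z_0}]=\E[e^{U(X_0)}]$ for every $t\in[0,T]$, and taking the supremum over $t$ yields the stated bound. Because $\alpha$ is merely $\FFF_0$-measurable, I would first condition on $\FFF_0$ and run the whole argument treating $\alpha$ as a deterministic constant, recovering the claim at the end via the tower property.

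First I would apply the It\^o formula to the time-dependent functional $(t,x)\mapsto e^{-\alpha t}U(x)$ evaluated along $X$. Using $\partial_t(e^{-\alpha t}U)=-\alpha e^{-\alpha t}U$, $D_x(e^{-\alpha t}U)=e^{-\alpha t}DU$ and $D_x^2(e^{-\alpha t}U)=e^{-\alpha t}D^2U$, together with $dX_t=\mu(X_t)\,dt+\sigma(X_t)\,dW_t$ and the absolutely continuous contribution $e^{-\alpha t}\overline U(X_t)\,dt$ of the integral term in $Z_t$, I obtain
\begin{align*}
dZ_t = e^{-\alpha t}\Big[-\alpha U + DU\,\mu + \tfrac12\,\mathrm{tr}\big[D^2U\,\sigma\sigma^*\big] + \overline U\Big](X_t)\,dt + e^{-\alpha t}\big(DU\,\sigma\big)(X_t)\,dW_t,
\end{align*}
whose martingale part has quadratic variation $d[Z]_t = e^{-2\alpha t}\|\sigma^*(X_t)DU(X_t)\|^2\,dt$.

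Next I would apply the It\^o formula to $e^{Z_t}$, giving $de^{Z_t}=e^{Z_t}\,dZ_t+\tfrac12 e^{Z_t}\,d[Z]_t$. Collecting the finite-variation part and factoring out $e^{Z_t}e^{-\alpha t}$, the remaining bracket is exactly
\begin{align*}
DU\,\mu + \tfrac12\,\mathrm{tr}\big[D^2U\,\sigma\sigma^*\big] + \frac{\|\sigma^*DU\|^2}{2e^{\alpha t}} + \overline U - \alpha U,
\end{align*}
which is nonpositive by the standing hypothesis. Hence $e^{Z_t}$ is the sum of a local martingale and a nonincreasing finite-variation process, i.e.\ a nonnegative local supermartingale.

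The main obstacle is integrability, since $e^{Z_t}$ is only a \emph{local} supermartingale and one cannot take expectations of the stochastic integral directly. I would introduce a localizing sequence of stopping times $\{\tau_n\}$ reducing the local martingale so that $e^{Z_{t\wedge\tau_n}}$ is a genuine supermartingale, obtaining $\E[e^{Z_{t\wedge\tau_n}}\mid\FFF_0]\le e^{Z_0}$. Letting $n\to\infty$ with $\tau_n\uparrow\infty$ a.s.\ and invoking Fatou's lemma (legitimate since $e^{Z_{t\wedge\tau_n}}\ge 0$) then gives $\E[e^{Z_t}\mid\FFF_0]\le e^{Z_0}=e^{U(X_0)}$; taking expectations and the supremum over $t$ completes the proof. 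The only remaining delicate points are verifying the a.s.\ finiteness conditions that justify the It\^o expansions and ensuring the drift nonpositivity is preserved under localization, both routine given the hypotheses; this is the content of \cite[Corollary 2.4]{CHJ13}.
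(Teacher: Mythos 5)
The paper gives no proof of this lemma beyond citing \cite[Lemma 3.1]{CHL16b} and \cite[Corollary 2.4]{CHJ13}, and your argument is precisely the proof underlying those references: apply It\^o's formula to $\exp\left(e^{-\alpha t}U(X_t)+\int_0^t e^{-\alpha r}\overline U(X_r)\,dr\right)$, observe that the hypothesis makes the drift nonpositive so this is a nonnegative local supermartingale, then localize and conclude by Fatou's lemma, with conditioning on $\FFF_0$ handling the random $\alpha$. Your proposal is correct and takes essentially the same approach the paper relies on.
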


\textbf{Proof}
See \cite[Lemma 3.1]{CHL16b} or \cite[Corollary 2.4]{CHJ13}.
\qed

Based on Lemma \ref{exp-int}, we present the exponential integrability of $u_\tau$. It should be mentioned that
For SPDEs, the existence of the strong solution is not uncommon. However, we can use the finite-dimensional 
approximation and Fatou lemma to rigorously prove the following lemma for the mild solution, under the assumption that $Q\in \LL_2^2$ 
and $\xi \in  \HH_0^1 \cap \HH^2$. 

\begin{lm}\label{exp-ut}
There exist  constants $C$ and $\alpha$ depending on $\xi$, $Q$ and $T$ such that
\begin{align}\label{exp-ut00}
\sup_{t\in[0,T]}\E \left[\exp\left(\frac {H( u_\tau(t))}{e^{\alpha t}}\right)\right]
\le C
\end{align}
and 
\begin{align}\label{exp-ut0}
\sup_{t\in[0,T]}\E \left[\exp\left(\frac {\|\nabla u_\tau(t)\|^2}{e^{\alpha t}}\right)\right]
\le C.
\end{align}
\end{lm}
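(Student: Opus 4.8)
The plan is to invoke the abstract criterion of Lemma \ref{exp-int} with the Lyapunov functional $U(u):=H(u)+H_0$, where $H_0\ge 0$ is a constant (fixed below) chosen so that $U\ge 0$ on the sphere $\{\|u\|=\|\xi\|\}$. The crucial simplification is that no new It\^o expansion is needed: on each stochastic subinterval $T_m$ the process $u_\tau=u_{\tau,m}^S$ solves $du_{\tau,m}^S=-\bi u_{\tau,m}^S\,dW-\tfrac12 u_{\tau,m}^S F_Q\,dr$ with $F_Q=\sum_{k}(Q^{\frac12}e_k)^2$, and Proposition \ref{cha-ene-spl} already identifies the It\^o drift and the quadratic-variation density of $H(u_\tau)$, namely
\begin{align*}
a(u):=\frac12\sum_{k\in\N}\|u\,\nabla(Q^{\frac12}e_k)\|^2,
\qquad
b(u):=\sum_{k\in\N}\big|\<\nabla u,\bi u\,\nabla(Q^{\frac12}e_k)\>\big|^2 .
\end{align*}
In the notation of Lemma \ref{exp-int} these are exactly $DU(u)\mu(u)+\tfrac12\textup{tr}[D^2U(u)\sigma(u)\sigma^*(u)]=a(u)$ and $\|\sigma^*(u)DU(u)\|^2=b(u)$, so the only work left is to verify the pointwise inequality $a(u)+\tfrac{b(u)}{2e^{\alpha t}}+\overline U(u)\le \alpha U(u)$ for suitable $\overline U$ and $\alpha$, and then to propagate the resulting supermartingale through the deterministic kicks.

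For the verification I would first control the coefficients using the charge conservation law \eqref{cha-spl} and the one-dimensional embedding $\HH^2\hookrightarrow W^{1,\infty}$. Since $\sum_k\|\nabla(Q^{\frac12}e_k)\|_{L^\infty}^2\le C\|Q^{\frac12}\|_{\LL_2^2}^2=:C_Q<\infty$, one gets $a(u)\le \tfrac12 C_Q\|\xi\|^2=:K_1$ and, by Cauchy--Schwarz, $b(u)\le 2a(u)\|\nabla u\|^2\le C_Q\|\xi\|^2\|\nabla u\|^2$. Next I relate $\|\nabla u\|^2$ to $H(u)$: in the defocusing case $H(u)\ge\tfrac12\|\nabla u\|^2$ directly, while in the focusing case the Gagliardo--Nirenberg inequality $\|u\|_{L^4}^4\le C\|u\|^3\|\nabla u\|$ together with Young's inequality yields $\|\nabla u\|^2\le 4H(u)+C'\|\xi\|^6$; in both cases $H$ is bounded below and there are constants $C_1,C_2>0$ with $\|\nabla u\|^2\le C_1H(u)+C_2$ on $\{\|u\|=\|\xi\|\}$. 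Choosing $H_0:=C_2/C_1$ makes $U=H+H_0\ge 0$ and turns this into the clean bound $\|\nabla u\|^2\le C_1U(u)$. Taking $\overline U:=-a\le 0$ and $\alpha:=\tfrac12 C_1 C_Q\|\xi\|^2$ then gives, using $e^{\alpha t}\ge 1$,
\begin{align*}
a(u)+\frac{b(u)}{2e^{\alpha t}}+\overline U(u)
=\frac{b(u)}{2e^{\alpha t}}
\le \frac{C_Q\|\xi\|^2}{2}\|\nabla u\|^2
\le \frac{C_1C_Q\|\xi\|^2}{2}U(u)=\alpha U(u),
\end{align*}
so the hypothesis of Lemma \ref{exp-int} holds on the whole reachable set (charge conservation confines $u_\tau$ to the sphere for all time).

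The main obstacle is that $u_\tau$ is \emph{not} a globally continuous semimartingale: at each node $t_m$ it jumps from $u_\tau(t_m)$ to $\Phi^D_{m,\tau}u_\tau(t_m)$, so Lemma \ref{exp-int} cannot be applied in one stroke on $[0,T]$. The point that unlocks the argument is that the deterministic flow $\Phi^D$ conserves both charge and energy (see \eqref{cha-ene-ud}); hence $U(u_\tau(t_m^+))=U(u_\tau(t_m))$, and since the additive term $\int_0^t\overline U(u_\tau(r))e^{-\alpha r}\,dr$ is a continuous function of $t$, the exponential functional $Z_t:=\exp\!\big(U(u_\tau(t))e^{-\alpha t}+\int_0^t\overline U(u_\tau(r))e^{-\alpha r}\,dr\big)$ is continuous across every $t_m$. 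On each subinterval $(t_m,t_{m+1}]$ the verified drift inequality is precisely the It\^o computation behind Lemma \ref{exp-int} carried out with the global time $t$ in the weight, so $Z$ is a local supermartingale there; continuity at the nodes plus the $\HH^1$-moment bound of Corollary \ref{h1-sta} (which supplies the integrability needed to upgrade local to true supermartingale) shows that $Z$ is a supermartingale on all of $[0,T]$, whence $\E[Z_t]\le Z_0=e^{H(\xi)+H_0}$. Finally, since $\overline U=-a$ with $0\le a\le K_1$, one has $0\le -\int_0^t\overline U e^{-\alpha r}\,dr\le K_1/\alpha$, so $\exp(U(u_\tau(t))e^{-\alpha t})\le e^{K_1/\alpha}Z_t$ and, using $H\le U$, $\sup_t\E[\exp(H(u_\tau(t))e^{-\alpha t})]\le e^{K_1/\alpha+H(\xi)+H_0}=:C$, which is \eqref{exp-ut00}. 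Running the identical argument with $U$ replaced by $C_1(H+H_0)$ (and $\alpha$ enlarged accordingly) controls $\exp(\|\nabla u_\tau\|^2 e^{-\alpha t})$ through $\|\nabla u_\tau\|^2\le C_1 H(u_\tau)+C_2$, yielding \eqref{exp-ut0}.
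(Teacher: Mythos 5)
Your proposal is correct, and it rests on the same pillars as the paper's proof: the exponential-integrability criterion of Lemma \ref{exp-int} applied to the stochastic flow \eqref{nls-s}, the charge conservation law \eqref{cha-spl} plus Gagliardo--Nirenberg to control the coefficients, and---crucially---the conservation of $H$ by the deterministic flow \eqref{cha-ene-ud} to pass through the kick times. Where you differ is in the bookkeeping, and the difference is genuine. The paper takes $U=H$, dumps all constant excess terms into $\overline U=-\beta_\lambda$ (with explicit $\alpha_\lambda,\beta_\lambda$ for $\lambda=\pm1$), applies Lemma \ref{exp-int} as a black box on each $T_m$ with the \emph{local} time weight $e^{\alpha_\lambda(t-t_m)}$, and then chains the interval bounds together by re-running the lemma for the rescaled functionals $e^{-\alpha_\lambda t_m}H$ (``Repeating the previous procedure''). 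You instead shift $U=H+H_0\ge 0$ so that Gagliardo--Nirenberg gives $\|\nabla u\|^2\le C_1U$ on the charge sphere, choose $\overline U=-a$ equal to minus the actual It\^o drift so the verification collapses to $b/(2e^{\alpha t})\le \alpha U$, and then build a \emph{single} supermartingale $Z_t$ on all of $[0,T]$, glued across the nodes by the continuity of $H(u_\tau(\cdot))$ (even though $u_\tau$ itself jumps there). Your route buys a cleaner chaining step---one supermartingale inequality $\E[Z_t]\le Z_0$ replaces the paper's iterated, rescaled applications of the lemma---at the cost of re-deriving the supermartingale mechanism behind Lemma \ref{exp-int} with the global time weight rather than citing it verbatim; the paper's route is more modular but its cross-interval iteration is the more delicate (and more tersely justified) step. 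Two cosmetic remarks: the appeal to Corollary \ref{h1-sta} to upgrade the local supermartingale is unnecessary, since $Z$ is positive and a positive local supermartingale is automatically a true supermartingale by Fatou; and for the final statement you should take the larger of the two values of $\alpha$ produced for \eqref{exp-ut00} and \eqref{exp-ut0}, which is harmless since enlarging $\alpha$ only weakens the bounds up to a constant.
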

	
\textbf{Proof}
We first prove \eqref{exp-ut00}.
Since  Eq. \eqref{nls-d} possesses the energy conservation law,
we focus on  Eq. \eqref{nls-s}. For simplicity, we denote $\sigma(u)=-\bi u Q^{\frac12}$, $\mu(u)=-\frac12uF_Q$ and omit the variable $t$ in $u_\tau^S$.
Simple calculations yield that in $T_m$, 
\begin{align*}
&DH(u_\tau)\mu(u_\tau)+\frac{\text{tr}\left[\sigma(u_\tau)\sigma^*(u_\tau)D^2H(u_\tau)\right]}2 
+\frac{\|\sigma^*(u_\tau)DH(u_\tau)\|^2}{2e^{\alpha_{\lambda} (t-t_m)}}\\
&= -\frac12\left\<\nabla u_\tau, u_\tau\nabla F_Q\right\>
+\frac{\sum_{k\in \N} \<\nabla u_\tau, \bi u_\tau Q^{\frac12}e_k\>^2}{2e^{\alpha_{\lambda} (t-t_m)}}.
\end{align*}
We conclude that
\begin{align*}
&DH(u_\tau)\mu(u_\tau)+\frac{\text{tr}\left[\sigma(u_\tau)\sigma^*(u_\tau)D^2H(u_\tau)\right]}{2}  \\
&\quad +\frac{\|\sigma^*(u_\tau)DH(u_\tau)\|^2}{2e^{\alpha_{\lambda} (t-t_m)}}
\le \alpha_{\lambda} H(u_\tau)+\beta_{\lambda}
\end{align*}
with 
\begin{align*}
\alpha_{-1}=2 \|Q^\frac12\|_{\LL_2^2}^2\|\xi\|^2,\quad
\beta_{-1} = \|Q^\frac12\|_{\LL_2^2}^2 \|\xi\|^2
\end{align*}
and
\begin{align*}
\alpha_1=4\|Q^\frac12\|_{\LL_2^2}^2 \|\xi\|^2,\quad 
\beta_1=\|Q^\frac12\|_{\LL_2^2}^2 (\|\xi\|^2+\|\xi\|^8).
\end{align*}
Applying Lemma \ref{exp-int} with $\overline{U}=-\beta_\lambda$ and the energy conservation law of $u_\tau^D$ in $T_m$ , we obtain 
\begin{align*}
\sup_{t\in T_m}\E\left[\exp\left(\frac {H(u_\tau(t))}{e^{\alpha_{\lambda}(t-t_m)}}\right)\right]
&\le e^{\beta_{\lambda}(t-t_m)} \E\left[ e^{H(u_\tau(t_m))}\right].
\end{align*}	
Similar arguments yield that 	
\begin{align*}
\E\left[\exp\left(\frac {H(u_\tau(t))e^{-\alpha_{\lambda}t_m}}{e^{\alpha_{\lambda}(t-t_m)}}\right)\right]
&\le e^{\beta_{\lambda}(t-t_m)}  
\E\left[ \exp\left(e^{-\alpha_{\lambda}t_m}H(u_\tau(t_m))\right)\right].
\end{align*}
Repeating the previous procedure, 
we get 
\begin{align*}
\sup_{t\in T_m}\E\left[\exp\left(\frac {H(u_\tau(t))}{e^{\alpha_{\lambda} t}}\right)\right]
&\le e^{\beta_{\lambda}t+H(\xi)},
\end{align*}
which is \eqref{exp-ut0}.
Using the relation between the energy functional $H(u)$ and $\|\nabla u\|^2$ (more details we refer to \cite[Proposition 3.1]{CHL16b} for the proof of \eqref{exp-u}), we obtain \eqref{exp-ut0}.
\\\qed

\begin{cor}\label{exp-ud}
There exist  constants $C$ and $\alpha$ depending on $\xi$, 
$Q^{\frac12}$ and $T$ such that
\begin{align}\label{exp-ud0}
\sup_{t\in[0,T]}\E \left[\exp\left(\frac{\|\nabla u_\tau^D(t)\|^2}{e^{\alpha t}}\right)\right]
\le C
\end{align}
\end{cor}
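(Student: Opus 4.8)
The plan is to reduce everything to the energy exponential integrability of $u_\tau$ already established in Lemma \ref{exp-ut}, exploiting that $u_\tau^D$ evolves by the deterministic flow \eqref{nls-d}, which conserves both charge and energy. First I would fix $m\in\Z_M$ and $t\in[t_m,t_{m+1})$ and recall from \eqref{cha-ene-ud} in Proposition \ref{cha-ene-spl} that, since $u_\tau^D$ on this subinterval solves \eqref{nls-d} with initial datum $u_\tau(t_m)$, one has both the charge identity $\|u_\tau^D(t)\|=\|u_\tau(t_m)\|=\|\xi\|$ and the energy identity $H(u_\tau^D(t))=H(u_\tau(t_m))$ almost surely.

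Next I would convert these conservation laws into a pathwise bound on $\|\nabla u_\tau^D(t)\|^2$ using the same elementary estimate relating $H$ to the $\HH^1$-seminorm that underlies \eqref{exp-u} and \eqref{exp-ut0}. In the defocusing case $\lambda=-1$ this is immediate, since $H(u)=\frac12\|\nabla u\|^2+\frac14\|u\|_{L_4}^4\ge\frac12\|\nabla u\|^2$ gives $\|\nabla u_\tau^D(t)\|^2\le 2H(u_\tau(t_m))$. In the focusing case $\lambda=1$ I would write $\|\nabla u\|^2=2H(u)+\frac12\|u\|_{L_4}^4$, control $\|u\|_{L_4}^4$ by the one-dimensional Gagliardo--Nirenberg inequality $\|u\|_{L_4}^4\le C\|u\|^3\|\nabla u\|$, and absorb the gradient factor with Young's inequality; using charge conservation $\|u_\tau^D(t)\|=\|\xi\|$, this yields a bound of the form $\|\nabla u_\tau^D(t)\|^2\le C_1 H(u_\tau(t_m))+C_2$ with $C_1\ge1$ and $C_2$ depending only on $\xi$, $Q$ and $T$. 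The nonnegativity of the left-hand side forces $C_1 H(u_\tau(t_m))+C_2\ge0$.

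With this bound in hand, I would use $t\ge t_m$ together with $\alpha>0$ to write $e^{\alpha t}\ge e^{\alpha t_m}$, so that, the exponent being nonnegative,
\begin{align*}
\frac{\|\nabla u_\tau^D(t)\|^2}{e^{\alpha t}}\le\frac{C_1 H(u_\tau(t_m))+C_2}{e^{\alpha t_m}}.
\end{align*}
Exponentiating, taking expectations and pulling out the harmless factor $e^{C_2}$ (using $e^{\alpha t_m}\ge1$), the whole matter reduces to bounding $\sup_m\E[\exp(C_1 H(u_\tau(t_m))/e^{\alpha t_m})]$ uniformly in $m$.

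Finally, I would obtain this last bound exactly as in the proof of Lemma \ref{exp-ut}, but applying the exponential integrability Lemma \ref{exp-int} to the functional $U=C_1 H$ in place of $H$. The only new point is that the quadratic-variation term $\|\sigma^*(u_\tau)DU(u_\tau)\|^2$ now carries an extra factor $C_1^2$; since $C_1$ is a fixed constant and this term is still controlled by $\|\nabla u_\tau\|^2\lesssim H(u_\tau)+1$ via charge conservation (and a further Gagliardo--Nirenberg absorption in the focusing case), the drift inequality of Lemma \ref{exp-int} continues to hold after enlarging $\alpha$ and the additive constant. Combined with the energy conservation of \eqref{nls-d} and the iteration over $m$ already carried out in Lemma \ref{exp-ut}, this gives $\sup_m\E[\exp(C_1 H(u_\tau(t_m))/e^{\alpha t_m})]\le C$ and hence \eqref{exp-ud0}. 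The main obstacle is precisely this last step: the coefficient $C_1\ge1$ prevents a direct appeal to \eqref{exp-ut00}, so one must re-run the exponential integrability argument for $C_1 H$ while tracking how the quadratic-variation term scales, the focusing-case Gagliardo--Nirenberg absorption being the other technical ingredient.
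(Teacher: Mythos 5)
Your proposal is correct and follows essentially the same route as the paper: the paper's proof of Corollary \ref{exp-ud} simply invokes the proof of Lemma \ref{exp-ut}, i.e.\ energy conservation of the deterministic flow \eqref{nls-d} gives $H(u_\tau^D(t))=H(u_\tau(t_m))$ on each $T_m$, after which the exponential moment bounds for $H(u_\tau(t_m))$ and the Gagliardo--Nirenberg conversion from $H$ to $\|\nabla\cdot\|^2$ yield \eqref{exp-ud0}. Your extra care about the multiplicative constant $C_1$ in the focusing case (re-running Lemma \ref{exp-int} with $U=C_1H$ and enlarged $\alpha$) is exactly the detail the paper leaves implicit by deferring to \cite[Proposition 3.1]{CHL16b}, so it is a legitimate filling-in rather than a different argument.
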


\textbf{Proof}
The estimate \eqref{exp-ud0} follows immediately from the proof of Lemma \ref{exp-ut}.
\qed

\subsection{Strong Convergence Rate of Splitting Process}
\label{sec-spl-str}

Based on the $\HH^2$-a priori estimate in Proposition \ref{h2-sta} and the 
$\HH^1$-exponential integrability in Lemma \ref{exp-ut}, we can estimate the strong error between $u_\tau$ and $u$.

\begin{tm}\label{u-ut}
For any $p\ge 1$, there exist a constant $C=C(\xi,Q,T,p)$ such that
\begin{align}\label{u-ut0}
\left(\E \left[\sup_{m\in \Z_{M+1}}\|u(t_m)-u_\tau(t_m)\|^p\right]\right)^\frac1p
\le C \tau^\frac12.
\end{align}
\end{tm}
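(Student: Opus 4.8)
The plan is to control the error at the grid points through a one-step decomposition into a \emph{stability} part and a \emph{local splitting error}, and then to close a discrete Gronwall inequality. Write $e_m:=u(t_m)-u_\tau(t_m)$, and let $\Phi_{t_m,t_{m+1}}$ denote the exact solution operator of Eq. \eqref{nls} over $T_m$ and $\Phi^S_{m,\tau}\Phi^D_{m,\tau}$ the one-step splitting operator. Since $u(t_{m+1})=\Phi_{t_m,t_{m+1}}(u(t_m))$ and $u_\tau(t_{m+1})=\Phi^S_{m,\tau}\Phi^D_{m,\tau}(u_\tau(t_m))$, inserting $\pm\,\Phi_{t_m,t_{m+1}}(u_\tau(t_m))$ gives $e_{m+1}=S_m+\mathcal E_m$, where $S_m:=\Phi_{t_m,t_{m+1}}(u(t_m))-\Phi_{t_m,t_{m+1}}(u_\tau(t_m))$ and $\mathcal E_m:=\Phi_{t_m,t_{m+1}}(u_\tau(t_m))-\Phi^S_{m,\tau}\Phi^D_{m,\tau}(u_\tau(t_m))$, with $e_0=0$.

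For the local error $\mathcal E_m$, I would exploit that \emph{both} sub-flows are solved exactly in the definition of $u_\tau$, so $\mathcal E_m$ is purely the operator-splitting (commutator) error from the common datum $v:=u_\tau(t_m)$, which is $\FFF_{t_m}$-measurable and, by Proposition \ref{h2-sta} and Lemma \ref{exp-ut}, uniformly bounded in $\HH^2$ and $\HH^1$-exponentially integrable. I would expand both $\Phi_{t_m,t_{m+1}}(v)$ and $\Phi^S_{m,\tau}\Phi^D_{m,\tau}(v)$ by a stochastic Taylor/Picard expansion over $T_m$, using the It\^o form $du=(\bi\Delta u+\bi\lambda|u|^2u-\tfrac12 uF_Q)dt-\bi u\,dW$. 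Setting $B:=W(t_{m+1})-W(t_m)$, the orders $\tau^{1/2}$ (the term $-\bi vB$) and $\tau$ (both the drift $\bi\tau(\Delta v+\lambda|v|^2v)-\tfrac\tau2 vF_Q$ and the mean-zero It\^o term $-\int_{t_m}^{t_{m+1}}v(W(s)-W(t_m))\,dW(s)=-\tfrac12 v(B^2-\tau F_Q)$) match term by term the expansion $e^{-\bi B}(v+\bi\tau(\Delta v+\lambda|v|^2v))$ of the split flow; in particular the fluctuation $-\tfrac12 v(B^2-\tau F_Q)$ cancels exactly. Hence $\|\mathcal E_m\|_{\mathbb L^p(\Omega;\mathbb L^2)}\le C\tau^{3/2}$, the surviving terms involving $\Delta v$ (controlled by the $\HH^2$-a priori estimate \eqref{h2-sta0}) and polynomials in $v$ (controlled in $\mathbb L^p(\Omega)$ by the exponential integrability \eqref{exp-ut0} through Gagliardo--Nirenberg).

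For the stability part $S_m$ — the difference of the exact flow from the two initial data $u(t_m)$ and $u_\tau(t_m)$ over a single step — I would invoke a localized version of the strongly continuous dependence estimate \eqref{con-dep}: running the argument behind \eqref{con-dep} over the single interval $T_m$ yields $\|S_m\|_{\mathbb L^p(\Omega;\mathbb L^2)}\le(1+C\tau)\|e_m\|_{\mathbb L^p(\Omega;\mathbb L^2)}$ with $C$ independent of $m$ and $\tau$. The crucial point is that the Lipschitz constant of the exact flow is random and polynomial in $\|\nabla u\|^2$ and $\|\nabla u_\tau\|^2$; keeping the Gronwall constant uniform requires the $\HH^1$-exponential integrability of both $u$ (estimate \eqref{exp-u}) and $u_\tau$ (estimate \eqref{exp-ut0}) to bound the exponential moment of the accumulated Lipschitz constant, exactly as in the proof of \eqref{con-dep}, while the $\HH^2$ bounds guarantee all moments are finite and uniform in $m$.

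Combining the two estimates gives $\|e_{m+1}\|_{\mathbb L^p(\Omega;\mathbb L^2)}\le(1+C\tau)\|e_m\|_{\mathbb L^p(\Omega;\mathbb L^2)}+C\tau^{3/2}$ with $e_0=0$, and discrete Gronwall with $M=T/\tau$ yields $\sup_m\|e_m\|_{\mathbb L^p(\Omega;\mathbb L^2)}\le e^{CT}M\cdot C\tau^{3/2}=C'\tau^{1/2}$, so that summing the $O(\tau^{3/2})$ local errors linearly already produces the order $1/2$ without appealing to martingale cancellation. To place the supremum inside the expectation as in \eqref{u-ut0}, I would upgrade the per-step bounds to maximal ones, applying the Burkholder--Davis--Gundy inequality to the stochastic-integral parts of $S_m$ and $\mathcal E_m$ so that $\sup_{k\le m}$ passes inside before summation. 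I expect the main obstacle to be twofold: first, making the order-$\tau$ cancellation in $\mathcal E_m$ rigorous (matching the It\^o correction of the exact flow with the second-order expansion of the multiplier $e^{-\bi B}$) and bounding the residual $\HH^2$- and nonlinearity-dependent terms; second, establishing the uniform stability factor $1+C\tau$ for this non-monotone flow, where the exponential integrability is indispensable to tame the otherwise non-integrable random Lipschitz constant.
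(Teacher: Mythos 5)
Your overall skeleton (Lady Windermere's fan: one-step splitting into a stability part $S_m$ and a local error $\mathcal E_m$, then discrete Gronwall) is reasonable, and your local-error analysis is essentially sound: the cancellation of the It\^o correction $-\tfrac12 v(B^2-\tau F_Q)$ against the second-order term of $e^{-\bi B}$ is correct, and the surviving commutator-type terms are indeed $\OOO(\tau^{3/2})$ under the available $\HH^2$ bounds (though to make the expansion rigorous you must work with the mild/unitary-group formulation, since iterating the strong form produces $\Delta^2$-terms that would require $\HH^4$ regularity, which is not available here). However, there is a genuine gap in the stability step, and it is precisely the crux of the whole problem. The estimate
\begin{align*}
\|S_m\|_{\mathbb L^p(\Omega;\mathbb L^2)}\le (1+C\tau)\,\|e_m\|_{\mathbb L^p(\Omega;\mathbb L^2)},\qquad C \text{ deterministic, independent of } m,\tau,
\end{align*}
cannot be obtained by ``running the argument behind \eqref{con-dep} over $T_m$''. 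What that argument gives is the \emph{pathwise} bound $\|S_m\|\le \Lambda_m\|e_m\|$ with the random factor $\Lambda_m=\exp\big(C\tau+C\int_{t_m}^{t_{m+1}}\|u\|_{\mathbb L^\infty}\|u'\|_{\mathbb L^\infty}\,ds\big)$, which is not a.s. bounded by $1+C\tau$. Moreover $\Lambda_m$ is correlated with $e_m$ (both are functionals of the same data and noise), so to pass to moments you must decouple them: H\"older degrades the exponent on $e_m$ at every step and cannot be iterated $M\sim\tau^{-1}$ times, while conditioning on $\FFF_{t_m}$ only yields $\E[\Lambda_m^p\,|\,\FFF_{t_m}]\le \exp\big(C\tau(1+\|u(t_m)\|_{\HH^1}^2+\|u_\tau(t_m)\|_{\HH^1}^2)\big)$, i.e. a \emph{random} constant, not $(1+C\tau)^p$. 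If a deterministic per-step moment contraction of the kind you assert were available, the exponential integrability machinery would be superfluous and strong rates for non-monotone SPDEs would follow from classical arguments --- this is exactly the obstruction the paper is designed to overcome.

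The repair is to restructure the iteration the way the paper does: keep the entire recursion pathwise, $\|e_{m+1}\|^2\le(1+k_m)\|e_m\|^2+p_m$ with \emph{random} coefficients $k_m=4\tau+2\int_{t_m}^{t_{m+1}}\|u\|_{\mathbb L^\infty}\|u_{\tau,m}^D\|_{\mathbb L^\infty}\,ds$ and $p_m$ built from $\tau^2$ and the stochastic integrals $S^m_j$; apply the discrete Gronwall inequality (Lemma \ref{dis-gro}) pathwise, so that the accumulated stability factor becomes the single random variable $\exp\big(4T+2\int_0^T\|u\|_{\mathbb L^\infty}\|u_\tau^D\|_{\mathbb L^\infty}\,dt\big)$; and only then take expectations, separating this factor by H\"older and controlling it via the exponential integrability of $u$ and $u_\tau^D$ (Lemma \ref{u-ut-exp}), while the sum $\sum_m p_m$ is controlled by the $\HH^2$-moment bounds and Lemma \ref{est-int}. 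Note also that this pathwise route makes the supremum over $m$ in \eqref{u-ut0} come for free (the Gronwall bound is monotone in $m$), whereas in your per-step moment formulation the passage of $\sup_m$ inside the expectation is left unresolved.
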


Denote by $e_{m+1}:=u(t_{m+1})-u_\tau(t_{m+1})$ in $T_m$, $m\in \Z_M$, the local error.
Note that $u_\tau(t_{m+1})=u_{\tau,m}^S(t_{m+1})$, 
$u_{\tau,m}^S(t_m)=u_{\tau,m}^D(t_{m+1})=u_\tau(t_m)+\int_{t_m}^{t_{m+1}}
\bi \Delta u_{\tau,m}^D(r)+\bi \lambda |u_{\tau,m}^D(r)|^2 u_{\tau,m}^D(r) dr$
and $e_m=u(t_m)-u_\tau^D(t_m)$.
Then
\begin{align}\label{umsm}
u(t_m)-u_{\tau,m}^S(t_m)=e_m-\bi \int_{t_m}^{t_{m+1}}
\left[\Delta u_{\tau,m}^D+\lambda |u_{\tau,m}^D|^2 u_{\tau,m}^D \right] dr.
\end{align}

We need the following representations of the differences $u-u_{\tau,m}^D$ and $u-u_{\tau,m}^S$ in $T_m$. 

\begin{lm}\label{u-ud-us}
For any $s\in T_m$ with $m\in \Z_M$, we have
\begin{align}\label{u-ud}
 u(s)-u_{\tau,m}^D(s) 
&=e_m+\int_{t_m}^s L_m^D dr-\bi\int_{t_m}^s u dW(r) 
\end{align}
and
\begin{align}\label{u-us}
 u(s)-u_{\tau,m}^S(s)  
&=e_m+\int_{t_m}^s L_m^S dr
-\bi\int_{t_m}^{t_{m+1}} \left[\Delta u^D_{\tau,m}+\lambda |u_{\tau,m}^D|^2u_{\tau,m}^D\right] dr \nonumber \\
&\quad -\bi\int_{t_m}^s \left[u-u_{\tau,m}^S\right] dW(r),
\end{align}
where 
\begin{align}\label{ld}
L_m^D:&=\bi \Delta\left[u-u_{\tau,m}^D\right]
+\bi \lambda (|u|^2 u-|u_{\tau,m}^D|^2 u_{\tau,m}^D)-\frac12  u F_Q,\\
\label{ls}
L_m^S:&=\bi \Delta u
+\bi \lambda |u|^2 u 
-\frac12 (u-u_{\tau,m}^S) F_Q.
\end{align}
\end{lm}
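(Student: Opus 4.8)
The plan is to obtain both identities by writing the It\^o integral forms of $u$, $u_{\tau,m}^D$ and $u_{\tau,m}^S$ on the subinterval $T_m$ and subtracting, for a fixed $m\in\Z_M$ and $s\in T_m$. The preliminary step is the It\^o--Stratonovich conversion. For the exact solution, Eq.~\eqref{nls} is equivalent to $du=\bigl[\bi\Delta u+\bi\lambda|u|^2u-\tfrac12 uF_Q\bigr]dr-\bi u\,dW(r)$, where the drift $-\tfrac12 uF_Q$, with $F_Q=\sum_{k\in\N}(Q^{\frac12}e_k)^2$, is the correction produced by the multiplicative noise $-\bi u\circ dW$. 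The same conversion turns the splitting equation \eqref{nls-s} into $du_{\tau,m}^S=-\tfrac12 u_{\tau,m}^S F_Q\,dr-\bi u_{\tau,m}^S\,dW(r)$, while the deterministic equation \eqref{nls-d} carries neither noise nor correction, $du_{\tau,m}^D=\bigl[\bi\Delta u_{\tau,m}^D+\bi\lambda|u_{\tau,m}^D|^2u_{\tau,m}^D\bigr]dr$.

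For \eqref{u-ud} I would subtract the integral form of $u_{\tau,m}^D$ from that of $u$ over $[t_m,s]$. The dispersive and nonlinear drifts combine into $\bi\Delta[u-u_{\tau,m}^D]+\bi\lambda(|u|^2u-|u_{\tau,m}^D|^2u_{\tau,m}^D)$; since the D-equation contributes no correction, the single term $-\tfrac12 uF_Q$ survives, and these together form $\int_{t_m}^s L_m^D\,dr$. The only stochastic contribution is $-\bi\int_{t_m}^s u\,dW(r)$, and the constant of integration is $u(t_m)-u_{\tau,m}^D(t_m)=e_m$, using $u_{\tau,m}^D(t_m)=u_\tau(t_m)$.

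For \eqref{u-us} I would likewise subtract the integral form of $u_{\tau,m}^S$ from that of $u$ over $[t_m,s]$. The essential algebraic observation is that the two correction drifts assemble into a difference, $-\tfrac12 uF_Q+\tfrac12 u_{\tau,m}^S F_Q=-\tfrac12(u-u_{\tau,m}^S)F_Q$, so the drift integrand is exactly $L_m^S$, while the martingale parts merge into $-\bi\int_{t_m}^s[u-u_{\tau,m}^S]\,dW(r)$. The constant of integration is $u(t_m)-u_{\tau,m}^S(t_m)$; because the S-flow is initialized at $u_{\tau,m}^S(t_m)=u_{\tau,m}^D(t_{m+1})$, I would substitute the already established interface identity \eqref{umsm}, $u(t_m)-u_{\tau,m}^S(t_m)=e_m-\bi\int_{t_m}^{t_{m+1}}[\Delta u_{\tau,m}^D+\lambda|u_{\tau,m}^D|^2u_{\tau,m}^D]\,dr$, which reproduces the extra full-interval drift term in \eqref{u-us} and completes the identity.

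The derivation is essentially bookkeeping, so I expect no genuine obstacle; the only places demanding care are the consistent treatment of the It\^o--Stratonovich corrections---so that the $F_Q$ drifts of $u$ and $u_{\tau,m}^S$ combine into the difference form hidden in $L_m^S$ rather than appearing separately---and the handling of the splitting interface, where $u_{\tau,m}^S$ inherits its initial datum from the terminal value $u_{\tau,m}^D(t_{m+1})$ of the deterministic flow and thereby injects an integral over the entire $[t_m,t_{m+1}]$ into the representation.
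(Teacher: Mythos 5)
Your proposal is correct and follows essentially the same route as the paper: the paper telescopes $u(s)-u_{\tau,m}^D(s)$ and $u(s)-u_{\tau,m}^S(s)$ through the nodes $t_m$, $t_{m+1}$ and substitutes the (It\^o-form) integral representations of Eq. \eqref{nls}, \eqref{nls-d} and \eqref{nls-s}, which is exactly your ``subtract the integral forms and account for the interface datum via \eqref{umsm}'' argument. Your handling of the Stratonovich-to-It\^o corrections (so that the $F_Q$ drifts combine into $-\tfrac12(u-u_{\tau,m}^S)F_Q$ in $L_m^S$ while only $-\tfrac12 uF_Q$ survives in $L_m^D$) matches the paper's bookkeeping.
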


\textbf{Proof}
Note that $u(s)-u_{\tau,m}^D(s)=[u(s)-u(t_m)]+e_m+[u_{\tau,m}^D(t_m)-u_{\tau,m}^D(s)]$.
Combining \eqref{nls} and \eqref{nls-d}, we get \eqref{u-ud}.
Similarly, $u(s)-u_{\tau,m}^S(s)=[u(s)-u(t_m)]+e_m+[u_{\tau,m}^D(t_m)-u_\tau^D(t_{m+1})]+[u_{\tau,m}^S(t_m)-u_{\tau,m}^S(s)]$, which shows \eqref{u-us} by 
\eqref{nls}, \eqref{nls-d} and \eqref{nls-s}.
\qed

We also need to estimate the following stochastic integrals:
\begin{align*}
S^m_1:&=\int_{t_m}^{t_{m+1}}\left\|W(s)-W(t_m)\right\|_{\HH^1}^2ds, \\
S^m_2:&=\int_{t_m}^{t_{m+1}} 
\left\| \int_{t_m}^s u(r)dW(r)\right\|^2_{\HH^2} ds, \\
S^m_3:&=\int_{t_m}^{t_{m+1}}
\left\|\int_{t_m}^s  \int_{t_m}^r L_m^S dr_1dW(r)\right\| ds, \\
S^m_4:&=\int_{t_m}^{t_{m+1}}\left\|\int_{t_m}^s  \int_{t_m}^{r}
\left[u(r_1)-u^S(r_1)\right]dW(r_1)dW(r)\right\| ds.
\end{align*}

\begin{lm}\label{est-int} 
For any $p\ge 1$ and $m\in \Z_M$, there exists a constant $C=C(\xi,Q,T, p)$ such that 	
\begin{align*}
\|S^m_j\|_{\mathbb L^p(\Omega)}\le C\tau^2,\quad j=1,2,4; \quad
\|S^m_3\|_{\mathbb L^p(\Omega)}\le C\tau^{\frac 52}.
\end{align*}
\end{lm}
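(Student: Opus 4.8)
The plan is to treat all four quantities by the same two-step scheme. First I would pull the $\mathbb L^p(\Omega)$-norm through the outer deterministic $ds$-integral by Minkowski's integral inequality, reducing each $\|S_j^m\|_{\mathbb L^p(\Omega)}$ to an integral over $s\in T_m$ of the $\mathbb L^p(\Omega)$-norm of the integrand. Then I would estimate that integrand by Gaussianity (for $S_1^m$) or by the Burkholder--Davis--Gundy inequality (for $S_2^m,S_3^m,S_4^m$), feeding in the a priori moment bounds already at hand. The relevant inputs are the $\HH^2$-estimate \eqref{sta-u} for $u$, the $\HH^2$-estimate \eqref{h2-sta0} for $u_\tau$, the one-dimensional Sobolev embedding $\HH^1\hookrightarrow \mathbb L^\infty$ and the Banach-algebra property of $\HH^2$ on $\OOO=(0,1)$, together with the assumption $Q^\frac12\in\LL_2^2$, which in particular gives $\sum_{k}\|Q^\frac12 e_k\|_{\mathbb L^\infty}^2 \le C\|Q^\frac12\|_{\LL_2^2}^2<\infty$ and $\|F_Q\|_{\mathbb L^\infty}\le C\|Q^\frac12\|_{\LL_2^2}^2$.

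For $S_1^m$ I would use that $W(s)-W(t_m)$ is a centred Gaussian element of $\HH^1$ with $\E[\|W(s)-W(t_m)\|_{\HH^1}^2]=(s-t_m)\sum_k\|Q^\frac12 e_k\|_{\HH^1}^2$; Gaussian hypercontractivity then gives $(\E[\|W(s)-W(t_m)\|_{\HH^1}^{2p}])^{1/p}\le C(s-t_m)$, and integrating in $s$ over $T_m$ produces the factor $\tau^2$. For $S_2^m$, the Burkholder--Davis--Gundy inequality in $\HH^2$ bounds $\E[\|\int_{t_m}^s u\,dW\|_{\HH^2}^{2p}]$ by $C\,\E[(\int_{t_m}^s\sum_k\|u Q^\frac12 e_k\|_{\HH^2}^2\,dr)^p]$; the algebra property of $\HH^2$ turns $\sum_k\|u Q^\frac12 e_k\|_{\HH^2}^2$ into $C\|u\|_{\HH^2}^2\|Q^\frac12\|_{\LL_2^2}^2$, and Hölder in time together with \eqref{sta-u} yields $(\E[\|\int_{t_m}^s u\,dW\|_{\HH^2}^{2p}])^{1/p}\le C(s-t_m)$, so the outer integral again gives $\tau^2$.

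The two iterated integrals $S_3^m$ and $S_4^m$ carry the essential bookkeeping. For $S_3^m$, applying Burkholder--Davis--Gundy to the outer $dW(r)$-integral and using $\|f Q^\frac12 e_k\|\le\|f\|\,\|Q^\frac12 e_k\|_{\mathbb L^\infty}$ reduces matters to $\E[(\int_{t_m}^s\|\int_{t_m}^r L_m^S\,dr_1\|^2 dr)^{p/2}]$; since the inner integral is a deterministic Bochner integral, two successive Cauchy--Schwarz/Hölder steps in time extract a factor $\tau^2$ in front of the surviving $\int_{t_m}^{t_{m+1}}\E[\|L_m^S\|^p]\,dr_1$, where $\|L_m^S\|$ is controlled uniformly in $\mathbb L^p(\Omega)$ via \eqref{ls}, the a priori $\HH^2$-bounds and Gagliardo--Nirenberg. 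This gives $\tau^{3/2}$ for the inner $\mathbb L^p(\Omega)$-norm, and the outer $ds$-integral supplies the final $\tau$, hence $\tau^{5/2}$. For $S_4^m$, two nested applications of Burkholder--Davis--Gundy, one per $dW$, each converting a stochastic integral over an interval of length $\tau$ into a time integral of the squared integrand, together with the uniform $\mathbb L^2$-bound on $u-u_{\tau,m}^S$ coming from the $\HH^1$-stability of $u$ and $u_\tau$ in \eqref{h1-sta0}, yield $(\E[\|\cdot\|^p])^{1/p}\le C\tau$ for the integrand, and the outer integral gives $\tau^2$.

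The main obstacle is precisely the power-of-$\tau$ accounting in $S_3^m$: the mismatch between the single inner time integral and the outer stochastic integral is what produces the sharper exponent $5/2$ rather than $2$, and one must be careful not to lose this half-order by applying Burkholder--Davis--Gundy where a deterministic Cauchy--Schwarz estimate is available. A secondary point is ensuring that all the pathwise constants multiplying the $\tau$-powers are genuinely $\mathbb L^p(\Omega)$-integrable uniformly in $m$: this is where the nonlinear term $|u|^2u$ and the operator term $\Delta u$ in $L_m^S$, as well as the difference $u-u_{\tau,m}^S$, must be bounded through \eqref{sta-u}, \eqref{h2-sta0} and \eqref{h1-sta0}, so that every $\sup_r\E[\,\cdot\,]$ appearing is finite and independent of $\tau$ and $m$.
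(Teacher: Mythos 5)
Your proposal is correct and follows essentially the same route as the paper: Minkowski's integral inequality to pull the $\mathbb L^p(\Omega)$-norm through the outer $ds$-integral, Burkholder--Davis--Gundy (once for $S^m_2, S^m_3$, twice for $S^m_4$) combined with the a priori estimates \eqref{sta-u}, \eqref{h2-sta0}, and the same $\tau$-bookkeeping that isolates the extra half order for $S^m_3$ from the deterministic inner integral. The only cosmetic deviations are that the paper handles $S^m_1$ by substituting $u\equiv 1$ into the $S^m_2$ estimate rather than invoking Gaussian hypercontractivity, and bounds $u-u_{\tau,m}^S$ in $S^m_4$ via the a.s. charge conservation laws rather than the $\HH^1$-moment bound \eqref{h1-sta0}; both variants are equally valid.
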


\textbf{Proof}
Let $p\ge 2$.
By Minkovski and Burkholder--Davis--Gundy inequalities and the a priori estimate \eqref{sta-u}, we have
\begin{align*}
\left\|S^m_2\right\|_{\mathbb L^p(\Omega)}
&\le \int_{t_m}^{t_{m+1}} 
\left\| \int_{t_m}^s u(r)dW(r)\right\|^2_{\mathbb L^{2p}(\Omega;\HH^2)} ds \\
&\le \|Q^{\frac12}\|^2_{\LL_2^2}\int_{t_m}^{t_{m+1}}\int_{t_m}^s 
\|u(r)\|^2_{\mathbb L^{2p}(\Omega;\HH^2)}drds
\le C\tau^2.
\end{align*}
This in turn shows $\left\|S^m_1\right\|_{\mathbb L^p(\Omega)}\le C\tau^2$ only by substituting $u\equiv 1$.

Applying Burkholder--Davis--Gundy inequality twice and the charge conservation law, we obtain 
\begin{align*}
&\|S^m_4\|_{\mathbb L^p(\Omega)} \\
&\le C \int_{t_m}^{t_{m+1}} 
\left[\int_{t_m}^s \sum_{k\in \N}\left\| \int_{t_m}^r\left(u(r_1)-u_\tau^S(r_1)\right) dW(r_1) Q^{\frac12}e_k\right\|_{\mathbb L^\frac p2(\Omega; \mathbb L^2)}^2 dr\right]^{\frac12} ds
\\
&\le C\|Q^{\frac12}\|_{\LL_2^2} \int_{t_m}^{t_{m+1}} 
\left[\int_{t_m}^s \left\| \int_{t_m}^r \left(u(r_1)-u_\tau^S(r_1) \right) dW(r_1)\right\|_{\mathbb L^p(\Omega; \mathbb L^2)}^2 dr\right]^{\frac12} ds\\
&\le C \|Q^{\frac12}\|_{\LL_2^2}^2 \int_{t_m}^{t_{m+1}} 
\left[\int_{t_m}^s \int_{t_m}^r \left\|u(r_1)-u_\tau^S(r_1)\right\|_{\mathbb L^p(\Omega; \mathbb L^2)}^2  dr_1dr \right]^{\frac12} ds\le C \tau^2.
\end{align*}
Similar arguments yield that $\|S^m_3\|_{\mathbb L^p(\Omega)}\le C\tau^\frac52$ and $\|S^m_4\|_{\mathbb L^p(\Omega)}\le C\tau^\frac52$ for $p\ge 2$.
We complete the proof for $p\in [1,2)$ by H\"older inequality.
\qed

\begin{lm}\label{u-ut-exp}
For any $p\ge 1$, there exists a constant $C=C(\xi,Q,T)$ such that 
\begin{align}\label{u-ut-exp0}
\left\|\exp\left(2\int_0^T  \|u(s)\|_{\mathbb L^\infty}
\|u_\tau^D(s)\|_{\mathbb L^\infty} ds\right)\right\|_{\mathbb L^p(\Omega)}\le C
\end{align}
\end{lm}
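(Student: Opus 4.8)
The plan is to reduce the $\mathbb L^\infty$-norms to $\HH^1$-seminorms via a one-dimensional interpolation inequality, to remove the $\mathbb L^2$-factors by charge conservation, and then to exploit the $\HH^1$-exponential integrability of $u$ and $u_\tau^D$ recorded in \eqref{exp-u} and \eqref{exp-ud0}. The whole difficulty is to arrange the exponent so that it is at most \emph{linear} in $\|\nabla u(s)\|$ and $\|\nabla u_\tau^D(s)\|$.

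First I would invoke the one-dimensional Gagliardo--Nirenberg (Agmon) inequality: for $v\in \HH_0^1(\OOO)$ one has $\|v\|_{\mathbb L^\infty}\le \sqrt2\,\|v\|^{1/2}\|\nabla v\|^{1/2}$, which follows from $v(x)^2=2\int_0^x v\,\nabla v\,dr$. Applying this to $u(s)$ and $u_\tau^D(s)$ and using the charge conservation laws $\|u(s)\|=\|\xi\|$ for the exact solution together with $\|u_\tau^D(s)\|=\|u_\tau(t_m)\|=\|\xi\|$ for the auxiliary process (see \eqref{cha-ene-ud} and \eqref{cha-spl}), I would obtain the pointwise-in-time bound
\[
\|u(s)\|_{\mathbb L^\infty}\|u_\tau^D(s)\|_{\mathbb L^\infty}\le 2\|\xi\|\,\|\nabla u(s)\|^{1/2}\|\nabla u_\tau^D(s)\|^{1/2}\le \|\xi\|\big(\|\nabla u(s)\|+\|\nabla u_\tau^D(s)\|\big),
\]
where the last step is Young's inequality.

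The decisive structural point, and the one demanding care, is that the exponent is now at most linear in the two gradient norms. This matters because \eqref{exp-u} and \eqref{exp-ud0} only control $\exp(\|\nabla\cdot\|^2/e^{\alpha t})$ with the specific, non-adjustable constant $1/e^{\alpha t}$: a cruder estimate such as $\|\cdot\|_{\mathbb L^\infty}\lesssim\|\cdot\|_{\HH^1}$ would leave a \emph{quadratic} term carrying an arbitrary multiplicative constant (produced both by the outer power $p$ and by the Jensen step below), which could not be absorbed into those estimates. Given the linear bound, I would pass the time average inside the exponential by Jensen's inequality (convexity of $\exp$),
\[
\exp\!\Big(c\int_0^T g(s)\,ds\Big)\le \frac1T\int_0^T \exp\big(cTg(s)\big)\,ds,
\]
with $g(s)=\|\xi\|(\|\nabla u(s)\|+\|\nabla u_\tau^D(s)\|)$ and $c=2p$, then take expectation, apply Fubini, and split the two factors by the Cauchy--Schwarz inequality in $\Omega$. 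This reduces the claim to uniform-in-$s$ bounds on $\E[\exp(c'\|\nabla u(s)\|)]$ and $\E[\exp(c'\|\nabla u_\tau^D(s)\|)]$ for an explicit constant $c'=c'(\xi,T,p)$.

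Finally, I would dominate the linear exponent by the quadratic one afforded by the integrability estimates: for any $c'>0$, $c'\|\nabla u(s)\|\le \tfrac{\|\nabla u(s)\|^2}{2e^{\alpha T}}+\tfrac{(c')^2 e^{\alpha T}}{2}$, and since $s\le T$ gives $e^{\alpha s}\le e^{\alpha T}$, hence $\tfrac{1}{2e^{\alpha T}}\le \tfrac{1}{e^{\alpha s}}$, so that $\E[\exp(c'\|\nabla u(s)\|)]\le e^{(c')^2 e^{\alpha T}/2}\sup_{s\in[0,T]}\E[\exp(\|\nabla u(s)\|^2/e^{\alpha s})]\le C$ by \eqref{exp-u}, and likewise for $u_\tau^D$ by \eqref{exp-ud0}. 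Collecting the constants, all independent of $\tau$, yields \eqref{u-ut-exp0}. The only genuine obstacle is the one flagged above, namely keeping the exponent linear in the gradients so that the fixed-constant exponential integrability can be applied; the remaining ingredients are elementary interpolation, Jensen and Cauchy--Schwarz.
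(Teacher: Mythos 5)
Your proof is correct and follows essentially the same route as the paper's: the one-dimensional Gagliardo--Nirenberg bound combined with the charge conservation of $u$ and $u_\tau^D$ to make the exponent linear in the gradients, Young's inequality to dominate that linear exponent by the fixed-constant quadratic one, Jensen's inequality in time, Cauchy--Schwarz in $\Omega$, and finally the exponential integrability estimates \eqref{exp-u} and \eqref{exp-ud0}. The only difference is the (immaterial) order in which Jensen, Cauchy--Schwarz and the final Young step are applied.
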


\textbf{Proof}
By Cauchy--Schwarz, Gagliardo--Nirenberg, Young, Jensen and Minkovski inequalities, we get
\begin{align*}
&\left\|\exp\left(2\int_0^T  \|u(s)\|_{\mathbb L^\infty}
\|u_\tau(s)\|_{\mathbb L^\infty} ds\right)\right\|_{\mathbb L^p(\Omega)} \\
&\le \left\|\exp\left(\int_0^T  2\|\xi\|  \|\nabla u\| ds\right)\right\|_{\mathbb L^{2p}(\Omega)}
\left\|\exp\left(\int_0^T 2\|\xi\|  \|\nabla u^D_\tau\| ds\right)\right\|_{\mathbb L^{2p}(\Omega)} \\
&\le \exp\left(\int_0^T 2pTe^{\alpha T} \|\xi\|^2 ds\right)
\left\|\exp\left( \int_0^T \frac{\|\nabla u\|^2}{2pTe^{\alpha T}} ds\right)\right\|_{\mathbb L^{2p}(\Omega)} \\
&\quad \times
\exp\left(\int_0^T 2pTe^{\alpha T}\|\xi\|^2 ds\right)
\left\|\exp\left( \int_0^T \frac{\|\nabla u_\tau^D\|^2}{2pTe^{\alpha T}} ds\right)\right\|_{\mathbb L^{2p}(\Omega)}  \\
&\le  C \left(\sup_{t\in[0,T]}\E \left[\exp\left(\frac {\|\nabla u(t)\|^2}{e^{\alpha t}}\right)\right]
\cdot \sup_{t\in[0,T]}\E \left[\exp\left(\frac{\|\nabla u_\tau^D(t)\|^2}{e^{\alpha t}}\right)\right]\right)^{\frac 1{2p}},
\end{align*}\\
where $\alpha$ is presented in \eqref{exp-u} or \eqref{exp-ut00}.
From the above estimations we obtain \eqref{u-ut-exp0} combining Theorem \ref{well} and Corollary \ref{exp-ud}.
\qed

Our last preliminary result is the following discrete Gronwall inequality(see \cite[Lemma 1.4.2]{QV94}).

\begin{lm} \label{dis-gro}
Let $m\in \N$ and $\{p_j\}_{j\in \N}$, $\{k_j\}_{j\in \N}$ are nonnegative number sequences.
Assume that the sequence $\{\phi_j\}_{j\in \N}$ satisfies 
\begin{align*}
\phi_{m+1}&\le \phi_0+\sum_{j=0}^m p_j+\sum_{j=0}^m k_j\phi_j,
\end{align*}
then 
\begin{align*}
\phi_{m+1}\le \left(\phi_0+\sum_{j=0}^m p_j\right)
\exp\left(\sum_{j=0}^m k_j\right).
\end{align*}	
\end{lm}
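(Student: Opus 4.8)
The plan is to prove the inequality purely algebraically by passing to an auxiliary partial-sum sequence, using only the nonnegativity of $\{p_j\}$ and $\{k_j\}$ together with the elementary bound $1+x\le e^x$. First I would introduce
\[
S_n:=\phi_0+\sum_{j=0}^n\bigl(p_j+k_j\phi_j\bigr)\quad(n\ge 0),\qquad S_{-1}:=\phi_0,
\]
so that the hypothesis reads exactly $\phi_{n+1}\le S_n$ for every $n\ge 0$, and consequently $\phi_n\le S_{n-1}$ for every $n\ge 0$ (the case $n=0$ being $\phi_0\le\phi_0$). The point of working with $S_n$ rather than $\phi_n$ directly is that the mixed term $\sum_{j}k_j\phi_j$ appearing in the hypothesis, which obstructs a naive induction on $\phi_{m+1}$, gets absorbed into a single sequence obeying a clean one-step recursion.

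The key step is the one-step estimate for $S_n$. From the definition, $S_n=S_{n-1}+p_n+k_n\phi_n$; substituting $\phi_n\le S_{n-1}$ and using $k_n\ge 0$ gives $S_n\le(1+k_n)S_{n-1}+p_n\le e^{k_n}S_{n-1}+p_n$, where the last inequality is $1+x\le e^x$ applied to $x=k_n\ge 0$. I would stress that multiplying $\phi_n\le S_{n-1}$ by the nonnegative number $k_n$ is legitimate irrespective of the sign of $\phi_n$, so no sign assumption on the $\phi_j$ is needed (the lemma grants none).

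Next I would unroll this recursion from $S_{-1}=\phi_0$. Writing $E_j:=e^{k_j}$, iterating $S_n\le E_nS_{n-1}+p_n$ yields
\[
S_m\le\Bigl(\prod_{j=0}^m E_j\Bigr)\phi_0+\sum_{i=0}^m\Bigl(\prod_{j=i+1}^m E_j\Bigr)p_i,
\]
where the coefficient of $p_m$ is the empty product $\prod_{j=m+1}^m E_j=1$. Since each $k_j\ge 0$, every factor $E_j\ge 1$, so each partial product obeys $\prod_{j=i+1}^m E_j\le\prod_{j=0}^m E_j=\exp\bigl(\sum_{j=0}^m k_j\bigr)$; because $p_i\ge 0$ these bounds may be inserted term by term, and the leading factor $\prod_{j=0}^m E_j$ equals $\exp\bigl(\sum_{j=0}^m k_j\bigr)$ exactly. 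Collecting terms gives $S_m\le\bigl(\phi_0+\sum_{i=0}^m p_i\bigr)\exp\bigl(\sum_{j=0}^m k_j\bigr)$, and the conclusion follows from $\phi_{m+1}\le S_m$.

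The argument is elementary, so there is no genuine analytic obstacle; the only points demanding care are bookkeeping ones. I must track the product indices in the unrolling correctly, in particular the empty product for the $i=m$ term, and I must use the nonnegativity hypotheses only where they are actually available — namely $k_j\ge 0$ for the convexity bound $1+x\le e^x$ and for the product comparison $\prod_{j=i+1}^m E_j\le\prod_{j=0}^m E_j$, and $p_i\ge 0$ for inserting the per-term bounds — while deliberately avoiding any appeal to the sign of $\phi_j$. This interpretation also makes clear that the hypothesis is to be read as holding for every index $n\le m$, which is precisely what the recursion consumes.
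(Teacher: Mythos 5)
The paper offers no proof of this lemma at all --- it simply cites \cite[Lemma 1.4.2]{QV94} --- so your self-contained argument via the partial-sum sequence $S_n$ is the natural thing to supply, and it is essentially the textbook proof: absorb the mixed term into $S_n$, derive a one-step recursion, unroll, and bound the products. Your index bookkeeping (the empty product for the $i=m$ term, the reading of the hypothesis as holding for every $n\le m$) is correct.

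There is, however, one genuine flaw, precisely at the point you single out as needing no care: the claim that ``no sign assumption on the $\phi_j$ is needed.'' Multiplying $\phi_n\le S_{n-1}$ by $k_n\ge 0$ is indeed sign-safe, but the very next step, $(1+k_n)S_{n-1}\le e^{k_n}S_{n-1}$, multiplies the inequality $1+k_n\le e^{k_n}$ by $S_{n-1}$ and is valid only when $S_{n-1}\ge 0$ --- which nothing in your setup guarantees if $\phi_0$ (hence $S_{-1}$) may be negative. In fact the lemma as literally stated is false without a sign condition: take $m=0$, $p_0=0$, $k_0=1$, $\phi_0=-1$, $\phi_1=-2$; the hypothesis $\phi_1\le\phi_0+k_0\phi_0=-2$ holds with equality, yet the conclusion would demand $\phi_1\le -e$. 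Two one-line repairs are available. (i) Add the implicit hypothesis $\phi_j\ge 0$ --- automatic in every application in this paper, where $\phi_j=\|e_j\|^2$ or $\|\widehat e_j\|^2$ --- which forces $S_{n-1}\ge 0$ and validates your one-step bound as written. (ii) Postpone the bound $1+x\le e^x$: prove by the same induction that $S_m\le\bigl(\phi_0+\sum_{j=0}^m p_j\bigr)\prod_{j=0}^m(1+k_j)$, noting that multiplying $S_{n-1}\le B_{n-1}$ by the \emph{positive} number $1+k_n$ is sign-safe, and only at the end use $\prod_{j=0}^m(1+k_j)\le\exp\bigl(\sum_{j=0}^m k_j\bigr)$, which requires merely $\phi_0+\sum_{j=0}^m p_j\ge 0$, i.e.\ $\phi_0\ge 0$, with the intermediate $\phi_j$ left sign-unrestricted. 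The rest of your argument stands unchanged.
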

\textbf{Proof of Theorem \ref{u-ut}}
\\
By It\^o formula and \eqref{umsm}, we have
\begin{align*}
\|e_{m+1}\|^2
&=\left\|e_m-\int_{t_m}^{t_{m+1}}
\bi \left[\Delta u_{\tau,m}^D+\lambda |u_{\tau,m}^D|^2 u_{\tau,m}^D\right] dr\right\|^2  \\
&\quad +2\int_{t_m}^{t_{m+1}} \left\<u-u_{\tau,m}^S,
\bi \left[\Delta u+\lambda |u|^2 u \right] \right\>ds
:=I^m_1+I^m_2.
\end{align*}
The first item $I^m_1$ has the estimation by Gagliardo--Nirenberg inequality:
\begin{align*}
I^m_1
&= \|e_m\|^2 -2\left\< e_m,\int_{t_m}^{t_{m+1}}  
\bi \left[\Delta u_{\tau,m}^D+\lambda |u_{\tau,m}^D|^2 u_{\tau,m}^D\right] ds \right\>\\
&\quad+\left\|\int_{t_m}^{t_{m+1}} \left[\Delta u_{\tau,m}^D+\lambda |u_{\tau,m}^D|^2 u_{\tau,m}^D\right] ds\right\|^2\\
&\le \|e_m\|^2 -2\left\< e_m,\int_{t_m}^{t_{m+1}}  
\bi \left[\Delta u_{\tau,m}^D+\lambda |u_{\tau,m}^D|^2 u_\tau^D\right] ds 
\right\>\\
&\quad+C\tau^2 \left(1+\sup_{t\in T_m}\|u_{\tau,m}^D(t)\|_{\HH^2}^2\right).
\end{align*}	
Substituting \eqref{u-us} into $u-u_{\tau,m}^S$, we divide $I^m_2$ into
\begin{align*}
I_2^m
&=2\int_{t_m}^{t_{m+1}} \Big\<\int_{t_m}^s L_m^S(r)dr , \bi(\Delta u(s)+ \lambda |u(s)|^2u(s)) \Big\> ds\\
&\quad +2\int_{t_m}^{t_{m+1}} \left\<e_m,\bi (\Delta u(s)+\lambda |u(s)|^2u(s) )\right\> ds\\
&\quad- 2\bigg\<\int_{t_m}^{t_{m+1}} \Delta u^D_{\tau,m}(r)+ \lambda |u^D_{\tau,m}(r)|^2u_{\tau,m}^D(r)dr, \\
&\qquad \qquad \int_{t_m}^{t_{m+1}}  \Delta u(s)+ \lambda |u(s)|^2u(s) ds \bigg\> \\
&\quad-2\int_{t_m}^{t_{m+1}} \Big\<\int_{t_m}^s  (u(r)-u_{\tau,m}^S(r))dW(r),
\Delta u(s)+ \lambda |u(s)|^2u(s) \Big\> ds\\
&:=I_{21}^m+I_{22}^m+I_{23}^m+I_{24}^m.
\end{align*}
By H\"older and Gagliardo--Nirenberg inequalities, we get
\begin{align*}
I^m_{21}
&\le C\tau^2 \left(1+\sup_{t\in T_m} \|u(t)\|_{\HH^2}^2+\sup_{t\in T_m} \|u_\tau(t)\|_{\HH^2}^2\right),\\
I^m_{23}
&\le C\tau^2 \left(1+\sup_{t\in T_m} \|u(t)\|_{\HH^2}^2+\sup_{t\in T_m} \|u_{\tau,m}^D(t)\|_{\HH^2}^2\right).
\end{align*}
Substituting  \eqref{u-us} into $I^m_{24}$, we have
\begin{align*} 
I^m_{24}
&=-2\int_{t_m}^{t_{m+1}}\left\<  e_m \left[W(s)-W(t_m)\right],
\Delta u+ \lambda |u|^2 u \right\> ds\\
&\quad-2\int_{t_m}^{t_{m+1}}\Bigg\< \int_{t_m}^s  
\int_{t_m}^r L_m^S(u(r_1)) dr_1dW(r),
\Delta u+ \lambda |u|^2 u \Bigg\> ds \\
&\quad+2\Bigg\<\bi \int_{t_m}^{t_{m+1}}  
\left[\Delta u_{\tau,m}^D+\lambda |u_{\tau,m}^D|^2u_{\tau,m}^D\right] dr_1, \\
&\qquad \qquad \int_{t_m}^{t_{m+1}} \left[\Delta u+ \lambda |u|^2 u\right]
[W(s)-W(t_m)]  ds \Bigg\>  \\
&\quad +2\int_{t_m}^{t_{m+1}}\Big\< \int_{t_m}^s  \int_{t_m}^{r}
\bi\left[u-u_{\tau,m}^S\right] dW(r_1)dW(r),
\Delta u+ \lambda |u|^2 u \Big\> ds   \\
&:=I^m_{241}+I^m_{242}+I^m_{243}+I^m_{244}.
\end{align*}  
Cauchy-Schwarz and Gagliardo--Nirenberg inequalities and the charge conservation law for Eq. \eqref{nls} yield that 
\begin{align*}
I^m_{241}
&\le \tau\|e_m\|^2+C\left(1+\sup_{t\in T_m}\|u(t)\|_{\HH^2}^2\right) S^m_1,
\end{align*} 
For other terms, we have
\begin{align*}
I^m_{243}
&\le C\tau^2 \left(1+\sup_{t\in T_m}\|u(t)\|_{\HH^2}^2\right)
+C\tau \left(1+\sup_{t\in T_m}\|u(t)\|_{\HH^2}^2 \right) S^m_1.
\end{align*}

For other terms, we have
\begin{align*}
I^m_{242}+I^m_{244}
&\le C\left(1+\sup_{t\in T_m}\|u(t)\|_{\HH^2}\right) (S^m_3+S^m_4).
\end{align*}
Then
\begin{align*}
I^m_{24}
&\le \tau\|e_m\|^2
+C\left(1+\sup_{t\in T_m}\|u(t)\|_{\HH^2}^2\right) (\tau^2+S^m_1+S^m_3+S^m_4).
\end{align*}	
Summing up $I^m_1$ and $I^m_2$ and integrating by parts, we deduce that 
\begin{align*}
&\|e_{m+1}\|^2-\|e_m\|^2 \\
&\le \tau\|e_m\|^2+2\int_{t_m}^{t_{m+1}}  
\left\<\Delta e_m, \bi \left[u-u_{\tau,m}^D \right] \right\>  ds  \\
&\quad +2\lambda \int_{t_m}^{t_{m+1}}  \left\< e_m, 
\bi \left[|u|^2 u-|u_{\tau,m}^D|^2 u_{\tau,m}^D \right]\right\>  ds  \\
&\quad +C\tau^2 \bigg(1+\sup_{t\in T_m} 
\|u_{\tau,m}^D(t)\|_{\HH^2}^2+\sup_{t\in T_m} 
\|u_{\tau,m}(t)\|_{\HH^2}^2
+\sup_{t\in T_m} \|u(t)\|_{\HH^2}^2\bigg) \\
&\quad +C\left(1+\sup_{t\in T_m}\|u(t)\|_{\HH^2}^2\right) (S^m_1+S^m_3+S^m_4).
\end{align*}
Denote by 
\begin{align*}
J^m_1:&=2\int_{t_m}^{t_{m+1}}  
\left\<\Delta e_m, \bi \left[u-u_{\tau,m}^D \right] \right\>  ds, \\
J^m_2:&=2\lambda \int_{t_m}^{t_{m+1}}  \left\< e_m, 
\bi \left[|u|^2 u-|u_{\tau,m}^D|^2 u_{\tau,m}^D \right]\right\>  ds.
\end{align*}
	%
	%
	Substituting \eqref{u-ud} into $J^m_1$, by H\"older inequality
	and integration by parts,
	we have 
\begin{align*}
J^m_1
&=2\int_{t_m}^{t_{m+1}}  \left\<\Delta e_m, 
\bi \int_{t_m}^s L_m^D(r) dr\right\> ds\\
&\quad +2\int_{t_m}^{t_{m+1}} 
\left\<e_m, \Delta \left(\int_{t_m}^s u(r)dW(r)\right) \right\> ds\\
&\le \tau \|e_m\|^2+C\tau^2 \left(1+\sup_{t\in T_m} \|u(t)\|_{\HH^2}^2+\sup_{t\in T_m}\|u_{\tau,m}^D(t)\|_{\HH^2}^2 \right)+CS^m_2.
\end{align*}
	For term $J^m_2$, using cubic difference formula $|a|^2a-|b|^2b=(|a|^2+|b|^2)(a-b)+ab(\overline a-\overline b)$ and \eqref{u-ud} in Lemma \ref{u-ud-us}, we obtain 
\begin{align*}
J^m_2
&=2\lambda \int_{t_m}^{t_{m+1}} 
\Bigg\< e_m, \bi \left(|u|^2 +|u_{\tau,m}^D|^2\right) 
\left[\int_{t_m}^s \mathbb L^D(r) dr -\bi \int_{t_m}^s  u(r) dW(r) \right] \Bigg\> ds\\
&\quad+2\lambda \int_{t_m}^{t_{m+1}} 
\Bigg\< e_m, \bi u u_{\tau,m}^D \left[\overline e_m+\int_{t_m}^s \overline{L_m^D}(r) dr +\bi \int_{t_m}^s \overline u(r) dW(r)\right] \Bigg\> ds.
\end{align*}
Cauchy-Schwarz and Gagliardo--Nirenberg inequalities imply
\begin{align*}
J^m_2
&\le \left(2\tau+2\int_{t_m}^{t_{m+1}} 
\|u\|_{L_{\infty}}\|u_{\tau,m}^D\|_{L_{\infty}} ds\right) \|e_m\|^2
+C \tau^3\Big(\sup_{t\in T_m}\|u(t)\|_{\mathbb L^\infty}^4\\
&\quad +\sup_{t\in T_m}\|u_{\tau,m}^D(t)\|_{\mathbb L^\infty}^4 \Big) \Big(1+\sup_{t\in T_m} \|u(t)\|_{\HH^2}^2
+\sup_{t\in T_m}\|u_{\tau,m}^D(t)\|_{\HH^2}^2 + S^m_2\Big).
\end{align*}
Therefore, we obtain
\begin{align*}
\|e_{m+1}\|^2
&\le \|e_m\|^2+\left(4 \tau+2\int_{t_m}^{t_{m+1}} \|u(s)\|_{L_{\infty}}\|u_{\tau,m}^D(s)\|_{L_{\infty}}ds \right) \|e_m\|^2   \\
&\quad+C\left(1+\sup_{t\in T_m}\|u(t)\|_{\HH^2}^6
+\sup_{t\in T_m}\|u_{\tau,m}(t)\|_{\HH^2}^6+
\sup_{t\in T_m}\|u_{\tau,m}^D(t)\|_{\HH^2}^6\right) \\ 
&\quad \times \left(\tau^2+\sum_{j=1}^4 S^m_j\right).
\end{align*}
Set 
\begin{align*}
k_m:&=4 \tau+2\int_{t_m}^{t_{m+1}} \|u(s)\|_{L_{\infty}}\|u_{\tau,m}^D(s)\|_{L_{\infty}}ds, \\
p_m:&=C\left(1+\sup_{t\in T_m}\|u(t)\|_{\HH^2}^6
+\sup_{t\in T_m}\|u_{\tau,m}(t)\|_{\HH^2}^6
+\sup_{t\in T_m}\|u_{\tau,m}^D(t)\|_{\HH^2}^6\right) \\
&\quad \times \left(\tau^2+\sum_{j=1}^4 S^m_j\right).
\end{align*}
Then
\begin{align*}
\|e_{m+1}\|^2
\le \|e_m\|^2+k_m \|e_m\|^2+p_m\le \cdots
\le \|e_0\|^2+\sum_{n=0}^m k_n \|e_n\|^2+\sum_{n=0}^m p_n. 
\end{align*}
Applying Lemma \ref{dis-gro}, we have
\begin{align*}
&\|e_{m+1}\|^2 \\ 
&\le C\exp\left(4T+2 \int_0^{t_{m+1}} \|u(t)\|_{L_{\infty}} \|u_{\tau_m}^D(t)\|_{L_{\infty}} dt\right) \times \left(\tau+\sum_{n=0}^m \left[\sum_{j=1}^4 S^n_j\right]\right) \\
&\qquad \times \left(1+\sup_{t\in [0,T]}\|u(t)\|_{\HH^2}^6
+\sup_{t\in [0,T]}\|u_\tau(t)\|_{\HH^2}^6
+\sup_{t\in [0,T]}\|u_\tau^D(t)\|_{\HH^2}^6\right).
\end{align*}
Then taking $\frac p2$-moments on both sides and using H\"older inequality, we obtain
\begin{align*}
&\E \left[\sup_{m\in \Z_{M+1}}\|e_{m+1}\|^p\right]  \\
&\le C\Bigg\|\exp\left(2\int_0^T \|u(t)\|_{L_{\infty}} \|u_\tau^D(t)\|_{L_{\infty}} dt\right)\Bigg\|_{\mathbb L^p(\Omega)}^\frac p2
\left\|\tau+\sum_{n=0}^M \sum_{j=1}^4 S^n_j \right\|_{\mathbb L^{2p}(\Omega)}^\frac p2   \\
&\quad \times \left(1+\left(\E\left[\sup_{t\in [0,T]}\|u(t)\|_{\HH^2}^{12p}\right]\right)^{\frac 14}
+\left(\E\left[\sup_{t\in [0,T]}\|u_\tau^D(t)\|_{\HH^2}^{12p}\right] \right)^{\frac 14}\right). 
\end{align*}
Now the exponential moments' estimation \eqref{u-ut-exp0} in Lemma \ref{u-ut-exp} and $\HH^2$-a priori estimations  \eqref{sta-u} and \eqref{h2-sta0}  imply that 
\begin{align*}
\E \left[\sup_{m\in \Z_{M+1}}\|e_{m+1}\|^p\right] 
\le \left(\tau+\sum_{n=0}^M \sum_{j=1}^4 \left\|S^n_j \right\|_{\mathbb L^{2p}(\Omega)}\right)^\frac p2.
\end{align*}
We complete the proof of \eqref{u-ut0} by Lemma \ref{est-int}.
\\\qed

\begin{rk}
From the proof, we can obtain a continuous version of the estimation \eqref{u-ut0}:
\begin{align*}
\left(\E \left[\sup_{t\in [0,T]}\|u(t)-u_\tau(t)\|^p\right]\right)^\frac1p
\le C \tau^\frac12.
\end{align*}
\end{rk}

\section{Splitting Crank--Nicolson Scheme}
\label{sec-cn}

For the nonlinear case,  the splitting process $u_\tau$  defined by \eqref{spl} is not a proper implementary  numerical method since Eq. \eqref{nls-d} does not possess a analytic solution. 
To obtain a temporal discretization,
we use the Crank--Nicolson scheme to temporally discretize  $u_\tau=\{u_\tau(t):\ t\in [0,T]\}$ and get the following splitting Crank--Nicolson scheme starting from $\xi$:
\begin{align}\label{spl-cn}
\begin{cases}
u^D_{m+1}
= u_m+ \bi \tau \Delta  u^D_{m+\frac12}+\bi \lambda \tau \frac {| u_m|^2+| u_{m+1}^D|^2}2  u^D_{m+\frac12},\\
u_{m+1}=\exp\left(-\bi (W_{t_{m+1}}-W_{t_m})\right) u_{m+1}^D,
\quad m\in \Z_{M-1},
\end{cases}
\end{align}
where $ u_{m+\frac12}^D=\frac12(u_m+ u_{m+1}^D)$.
It is not difficult to show that $\| u_m\|=\| u^D_{m+1}\|$ and $H(u_m)=H(u^D_{m+1})$, $m\in \Z_{M-1}$.
Moreover, the splitting Crank--Nicolson scheme \eqref{spl-cn} preserves the discrete charge a.s., i.e., 
\begin{align}\label{cn-cha}
\| u_m\|^2=\|\xi\|^2,\quad m\in \Z_{M}.
\end{align} 
Similarly to $u_\tau$, $u_m$ has a local continuous extension $u_m^S$ in $T_m$, $m\in \Z_{M-1}$, through the stochastic flow of Eq. \eqref{nls-s}.
 
Our main goal in this section is to estimate the strong convergence rate of the splitting Crank--Nicolson scheme \eqref{spl-cn}.
At first, we show the exponential integrability of $\{u_m\}_{m\in \Z_{M}}$, which implies the $\HH^1$-a priori estimate. 

\begin{lm}\label{exp-um}
There exist  constants $C$ and $\alpha$ depending on $\xi$, $Q$ and $T$ such that 
\begin{align}\label{exp-um0}
\sup_{m\in\Z_{M}}\E \left[\exp\left(\frac {H( u_m)}{e^{\alpha t_m}}\right)\right]
\le C
\end{align}
and 
\begin{align}\label{exp-um1}
\sup_{m\in\Z_{M}}\E \left[\exp\left(\frac {\|\nabla u_m\|^2}{e^{\alpha t_m}}\right)\right]
\le C.
\end{align}
\end{lm}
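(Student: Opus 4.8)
The plan is to mirror the proof of Lemma~\ref{exp-ut}, exploiting that the splitting Crank--Nicolson scheme \eqref{spl-cn} decouples into a deterministic implicit-midpoint step that exactly conserves the energy, $H(u_m)=H(u_{m+1}^D)$, and a stochastic step that, on each subinterval $T_m$, coincides with the analytic flow of Eq.~\eqref{nls-s}. First I would fix $m\in\Z_{M-1}$ and work with the continuous extension $u_m^S$ on $T_m$, i.e. the solution of Eq.~\eqref{nls-s} with $u_m^S(t_m)=u_{m+1}^D$, so that $u_m^S(t_{m+1})=u_{m+1}$. Since this is exactly the same stochastic differential equation that governs $u_{\tau,m}^S$ in the continuous splitting process, the entire growth of $H$ is concentrated in these stochastic substeps, while the Crank--Nicolson substep contributes nothing by energy conservation.

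Second, because $u_m^S$ solves the identical equation, the It\^o computation of Lemma~\ref{exp-ut} applies verbatim: with $\sigma(u)=-\bi u Q^{\frac12}$ and $\mu(u)=-\frac12 u F_Q$ one obtains the same pointwise inequality
\[
DH(u_m^S)\mu(u_m^S)+\frac{\text{tr}[\sigma(u_m^S)\sigma^*(u_m^S)D^2H(u_m^S)]}{2}+\frac{\|\sigma^*(u_m^S)DH(u_m^S)\|^2}{2e^{\alpha_{\lambda}(t-t_m)}}\le \alpha_{\lambda}H(u_m^S)+\beta_{\lambda},
\]
with the very same constants $\alpha_{\lambda},\beta_{\lambda}$; crucially, the charge entering these constants is still $\|\xi\|^2$ because of the discrete charge conservation \eqref{cn-cha}. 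Applying Lemma~\ref{exp-int} with $\overline U=-\beta_{\lambda}$ over $T_m$ and inserting the factor $e^{-\alpha_{\lambda}t_m}$ yields the one-step recursion
\[
\E\left[\exp\left(\frac{H(u_{m+1})}{e^{\alpha_{\lambda}t_{m+1}}}\right)\right]\le e^{\beta_{\lambda}\tau}\,\E\left[\exp\left(\frac{H(u_{m+1}^D)}{e^{\alpha_{\lambda}t_m}}\right)\right].
\]

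Third, I would close the loop by substituting the energy conservation $H(u_{m+1}^D)=H(u_m)$ of the Crank--Nicolson step, turning the right-hand side into $e^{\beta_{\lambda}\tau}\E[\exp(H(u_m)/e^{\alpha_{\lambda}t_m})]$, and then iterate over $m=0,\dots,M-1$. The product of the $e^{\beta_{\lambda}\tau}$ factors telescopes into $e^{\beta_{\lambda}t_m}\le e^{\beta_{\lambda}T}$, and since $u_0=\xi$ is deterministic the base term is $e^{H(\xi)}$, so \eqref{exp-um0} follows with $\alpha=\alpha_{\lambda}$ and $C=e^{\beta_{\lambda}T+H(\xi)}$. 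Finally, \eqref{exp-um1} follows from \eqref{exp-um0} exactly as in \cite[Proposition~3.1]{CHL16b}: Gagliardo--Nirenberg together with \eqref{cn-cha} controls the quartic term in $H$, so that $\|\nabla u_m\|^2$ is dominated by $H(u_m)$ up to a constant depending on $\|\xi\|$, and exponentiating preserves the bound.

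The hard part is conceptually the single discrete ingredient that distinguishes this from Lemma~\ref{exp-ut}, namely the \emph{exact} energy conservation $H(u_m)=H(u_{m+1}^D)$ of the implicit midpoint step. Unlike the continuous deterministic flow, this is a genuinely discrete identity, and it is what guarantees that no step-size-dependent error ever enters the energy budget, keeping the iteration as clean as in the continuous case. I would therefore take care to confirm this conservation holds pathwise (as asserted after \eqref{spl-cn}) and, equally importantly, that $\alpha_{\lambda}$ and $\beta_{\lambda}$ remain independent of $\tau$ and $N$; the latter is automatic because they depend only on $\|Q^{\frac12}\|_{\LL_2^2}$ and $\|\xi\|$, the charge being frozen at $\|\xi\|^2$ by \eqref{cn-cha}.
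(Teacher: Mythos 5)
Your proposal is correct and follows exactly the route the paper intends: the paper's own proof of Lemma \ref{exp-um} is simply the remark that it is ``similar to that of Lemma \ref{exp-ut}'', and your argument supplies precisely those omitted details --- exact discrete energy conservation $H(u_m)=H(u^D_{m+1})$ of the Crank--Nicolson substep in place of the continuous conservation for Eq.~\eqref{nls-d}, the identical application of Lemma \ref{exp-int} to the stochastic flow of Eq.~\eqref{nls-s} with the same constants $\alpha_\lambda,\beta_\lambda$ (unchanged thanks to the discrete charge conservation \eqref{cn-cha}), and the same weighted iteration over the subintervals. You correctly isolate the one genuinely discrete ingredient (pathwise energy conservation of the implicit midpoint step) and the $\tau$-independence of the constants, so nothing is missing.
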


\textbf{Proof}
The proof is similar to that of Lemma \ref{exp-ut} and we omit the details.
\qed
	
\begin{cor}\label{h1-um}
For any $p\ge 1$, there exist a constant $C=C(\xi,Q,T,p)$ such that 
\begin{align}\label{h1-um0}
\E \left[\sup_{m\in\Z_{M}}\|u_m\|_{\HH^1}^p \right]\le C.
\end{align}
\end{cor}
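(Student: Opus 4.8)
The plan is to mirror the proof of Corollary \ref{h1-sta}, exploiting that the splitting Crank--Nicolson scheme \eqref{spl-cn} shares the two structural properties used there: the discrete charge conservation \eqref{cn-cha}, $\|u_m\|^2=\|\xi\|^2$, and the fact that the deterministic Crank--Nicolson step preserves the energy, $H(u_m)=H(u^D_{m+1})$. Since the charge is conserved, it suffices to control $\E[\sup_m\|\nabla u_m\|^p]$, and by the Gagliardo--Nirenberg inequality together with charge conservation (exactly the passage from $H$ to $\|\nabla\cdot\|^2$ used to prove \eqref{exp-u}) this reduces to bounding $\E[\sup_m H(u_m)^{p/2}]$ up to the constant $\|\xi\|$.

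First I would record the ``supremum outside'' bound $\sup_{t\in[0,T]}\E[\|\nabla u^S(t)\|^p]\le C$ for the continuous flow, where $u^S$ denotes the piecewise extension of $\{u_m\}$ through the stochastic flow of \eqref{nls-s}. This is immediate from the exponential integrability proved on each $T_m$ in Lemma \ref{exp-um}: using $x^{p/2}\le C_p e^{x}$ and $e^{\alpha t}\le e^{\alpha T}$ one has $\|\nabla u^S(t)\|^p\le (e^{\alpha T})^{p/2}C_p\exp(\|\nabla u^S(t)\|^2/e^{\alpha t})$, so taking expectations and invoking \eqref{exp-um1} gives the claim.

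The crux is to interchange the supremum and the expectation \emph{without} picking up an $M$-dependent constant; the naive bound $\sup_m\le\sum_m$ is fatal here. For this I would write the It\^o energy evolution of $u^S_m$ on each $T_m$ exactly as in \eqref{ene-us}, namely
\[
H(u^S_m(t))=H(u^D_{m+1})+\int_{t_m}^t\left\<\nabla u^S_m,\bi u^S_m\,d(\nabla W(r))\right\>+\frac12\sum_{k\in\N}\int_{t_m}^t\|u^S_m\nabla(Q^{\frac12}e_k)\|^2\,dr .
\]
Because the Crank--Nicolson step preserves the energy, $H(u^D_{m+1})=H(u_m)=H(u^S_{m-1}(t_m))$, the energy functional of the piecewise flow is continuous in $t$, and its bounded-variation and martingale parts telescope across the subintervals into a single global identity on $[0,T]$. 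Hence $\sup_{t}H(u^S(t))$ is dominated by $H(\xi)$, the running supremum of one global stochastic integral, and the correction term $\tfrac12\sum_k\int_0^T\|u^S\nabla(Q^{\frac12}e_k)\|^2dr$, the last being deterministically bounded by $\tfrac T2\|\xi\|^2\|Q^{\frac12}\|_{\LL_2^2}^2$ thanks to charge conservation.

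Finally I would apply the Burkholder--Davis--Gundy inequality to the global stochastic integral, which (using $\|u^S\|=\|\xi\|$) bounds the $\mathbb L^{p/2}(\Omega)$-norm of its running supremum by $C\sup_{t}\|\nabla u^S(t)\|_{\mathbb L^{p/2}(\Omega;\mathbb L^2)}$, finite by the supremum-outside estimate of the second step. Combining these pieces yields $\E[\sup_m H(u_m)^{p/2}]\le C$, hence \eqref{h1-um0} for $p\ge 4$, and the range $p\in[1,4)$ follows by H\"older's inequality. The only genuinely delicate point is this telescoping/BDG step, i.e. organizing the piecewise flow into one martingale so that BDG moves the supremum inside uniformly in $M$; the remaining algebraic estimates (Gagliardo--Nirenberg, Young) are routine and parallel those already carried out for $u_\tau$.
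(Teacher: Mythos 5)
Your proposal is correct and follows essentially the same route as the paper: the paper's (very terse) proof likewise first deduces the uniform-in-$m$ moment bound $\sup_{m\in\Z_M}\E[\|u_m\|_{\HH^1}^p]\le C$ from the exponential integrability in Lemma \ref{exp-um}, and then moves the supremum inside the expectation ``by similar arguments in Lemma \ref{h1-sta}'', i.e.\ exactly your telescoped energy identity for the piecewise stochastic-flow extension (using $H(u_m)=H(u^D_{m+1})$ and charge conservation) followed by Burkholder--Davis--Gundy, with the range $p\in[1,4)$ handled by H\"older. Your write-up simply makes explicit the steps the paper leaves implicit.
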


\textbf{Proof}
By Lemma \ref{exp-um}, we have 
\begin{align*}
\sup_{m\in\Z_{M}}\E \left[\|u_m\|_{\HH^1}^p \right]\le C.
\end{align*}
The above inequality in turn shows \eqref{h1-um0} by similar arguments in Lemma \ref{h1-sta}.
\qed	

Our next technical requirement is the uniform $\HH^2$-a priori estimate of $u_m$, $m\in \Z_{M}$. 
We need the following useful result.
For convenience, we recall that
\begin{align}\label{cn}
 u^{D}_{m+1}-u_{m}
=\bi \tau\Delta  u^{D}_{m+\frac12}  + \bi \lambda \tau  \frac {|u_m|^2+|u^{D}_{m+1}|^2}2 u^{D}_{m+\frac12}, \quad m\in \Z_M.	
\end{align} 

\begin{lm}\label{discon}
Assume that $ u_m, u^D_{m+1}\in \HH^1_0\cap \HH^2$ for some $m\in \Z_M$. 
There exists a constant $C=C(\xi,Q)$ such that 
\begin{align*}
\| u^D_{m+1}- u_m\|^2
&\le C\tau\left(\|\nabla u^D_{m+1} \|^2+\|\nabla u_m \|^2\right),\\
\| u^D_{m+1}- u_m\|
&\le C\tau (\|\nabla  u^D_{m+1}\| +\|\nabla  u_m\|)
+\frac\tau2(\|\Delta  u_m\|+\|\Delta  u^D_{m+1}\|), \\
\|\nabla  u^D_{m+1}-\nabla  u_m\|^2
&\le C\tau \left(\|\nabla u^D_{m+1}\|^4+\|\nabla u_m\|^4\right)
+\frac{\tau}2(\|\Delta  u_m\|^2+\|\Delta  u_{m+1}\|^2).
\end{align*}
\end{lm}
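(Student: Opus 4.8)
The plan is to prove the three inequalities in Lemma \ref{discon} directly from the Crank--Nicolson relation \eqref{cn}, exploiting the fact that the only information we are allowed to use about $u_m$ and $u^D_{m+1}$ is that they lie in $\HH^1_0\cap\HH^2$ and that $\|u_m\|=\|\xi\|$ (via the charge conservation \eqref{cn-cha}), together with the one-dimensional Gagliardo--Nirenberg inequality $\|v\|_{\mathbb L^\infty}\le C\|v\|^{1/2}\|\nabla v\|^{1/2}$ and $\|v\|_{\mathbb L^\infty}\le C\|v\|^{1/2}_{\HH^1}$. The key observation is that \eqref{cn} already exhibits $u^D_{m+1}-u_m$ as $\tau$ times a bounded combination of $\Delta u^D_{m+\frac12}$ and the cubic term, so each estimate is obtained by measuring the right-hand side in the appropriate norm and moving the $\tau$-factor around.

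First I would prove the $\mathbb L^2$-estimate. Taking the inner product of \eqref{cn} with $u^D_{m+1}-u_m$ and using that $\Re\langle\bi\Delta u^D_{m+\frac12},u^D_{m+1}-u_m\rangle = -\Re\langle\bi\nabla u^D_{m+\frac12},\nabla(u^D_{m+1}-u_m)\rangle$ (integration by parts with vanishing boundary terms since $u_m,u^D_{m+1}\in\HH^1_0$) one finds that the Laplacian contribution is controlled by $\tau$ times gradient norms; more directly, I would take the $\mathbb L^2$-norm of \eqref{cn}, apply the triangle inequality, and estimate $\|\Delta u^D_{m+\frac12}\|$ crudely — but to avoid a second-order term on the right one instead pairs \eqref{cn} against $u^D_{m+1}-u_m$, integrates the Laplacian by parts so that it produces $-\tau\|\nabla u^D_{m+\frac12}\|^2\le 0$ up to the imaginary unit, and bounds the cubic term using $\||u|^2u\|\le\|u\|_{\mathbb L^\infty}^2\|u\|$ together with the charge bound $\|u_m\|=\|\xi\|$ and the Gagliardo--Nirenberg inequality. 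This yields $\|u^D_{m+1}-u_m\|^2\le C\tau(\|\nabla u^D_{m+1}\|^2+\|\nabla u_m\|^2)$, absorbing the $\|\xi\|$-dependence into $C=C(\xi,Q)$.

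Next, for the second inequality I would simply take the $\mathbb L^2$-norm of \eqref{cn} and estimate each term of $u^D_{m+1}-u_m$ separately: the Laplacian term contributes $\frac\tau2(\|\Delta u_m\|+\|\Delta u^D_{m+1}\|)$ after writing $u^D_{m+\frac12}=\frac12(u_m+u^D_{m+1})$, while the cubic term is bounded by $C\tau(\|\nabla u^D_{m+1}\|+\|\nabla u_m\|)$ using $\||u|^2u\|\le\|u\|^2_{\mathbb L^\infty}\|u\|\le C\|\xi\|^2(\|\nabla u_m\|+\|\nabla u^D_{m+1}\|)$ via Gagliardo--Nirenberg and charge conservation. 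For the third (gradient) inequality I would apply $\nabla$ to \eqref{cn}, take the $\mathbb L^2$-norm, split off the Laplacian piece as $\frac\tau2(\|\Delta u_m\|^2+\|\Delta u^D_{m+1}\|^2)$, and control the gradient of the cubic term by differentiating $|u|^2u$ and bounding the resulting terms by $\|u\|^2_{\mathbb L^\infty}\|\nabla u\|$, which after Gagliardo--Nirenberg produces the quartic gradient factor $C\tau(\|\nabla u^D_{m+1}\|^4+\|\nabla u_m\|^4)$.

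The main obstacle I anticipate is the careful bookkeeping in the third estimate: differentiating the nonlinearity $\tfrac12(|u_m|^2+|u^D_{m+1}|^2)u^D_{m+\frac12}$ generates several cross terms mixing $u_m$ and $u^D_{m+1}$ with their gradients, and each must be bounded so that only a genuinely quartic gradient contribution (times $\tau$) and a clean $\tfrac\tau2$-weighted second-derivative term survive, with no uncontrolled $\tau^0$ term appearing. The $\mathbb L^\infty$-bounds must be distributed carefully so that the charge bound $\|u\|=\|\xi\|$ absorbs all low-order factors into $C=C(\xi,Q)$; this is routine but delicate, and it is where the factor of $\tfrac12$ in front of the Hessian terms (rather than a generic constant) has to be tracked precisely.
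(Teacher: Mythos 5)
Your treatment of the first two inequalities coincides with the paper's own proof: pair \eqref{cn} with $u^D_{m+1}-u_m$ (integration by parts, Gagliardo--Nirenberg, charge conservation) for the quadratic $\mathbb L^2$-bound, and take the $\mathbb L^2$-norm of \eqref{cn} directly for the second. One caveat on the first: after integrating by parts, the Laplacian contribution is \emph{not} ``$-\tau\|\nabla u^D_{m+\frac12}\|^2\le 0$ up to the imaginary unit.'' Writing $u^D_{m+1}-u_m=2(u^D_{m+\frac12}-u_m)$, the diagonal piece $\<\nabla u^D_{m+\frac12},\bi\nabla u^D_{m+\frac12}\>$ vanishes identically (zero, not negative), but a cross term $2\tau\<\nabla u_m,\bi \nabla u^D_{m+\frac12}\>$ survives and must be kept; it is harmless, being bounded by $C\tau\left(\|\nabla u_m\|^2+\|\nabla u^D_{m+1}\|^2\right)$ by Cauchy--Schwarz, so your earlier formulation --- Cauchy--Schwarz applied to $\<\nabla(u^D_{m+1}-u_m),\bi\nabla u^D_{m+\frac12}\>$ --- is the version of the argument that actually closes.

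The genuine gap is in the third inequality. You propose to ``apply $\nabla$ to \eqref{cn} and take the $\mathbb L^2$-norm,'' but applying $\nabla$ to the right-hand side of \eqref{cn} produces the term $\bi\tau\nabla\Delta u^D_{m+\frac12}$, a \emph{third}-order derivative. Under the hypothesis $u_m,u^D_{m+1}\in\HH^1_0\cap\HH^2$ this quantity need not even lie in $\mathbb L^2$, and in any case $\|\nabla\Delta u^D_{m+\frac12}\|$ can never be ``split off as $\frac\tau2(\|\Delta u_m\|^2+\|\Delta u^D_{m+1}\|^2)$'': a third-derivative norm is not controlled by second-derivative norms. (There is also a homogeneity mismatch: taking the norm estimates $\|\nabla(u^D_{m+1}-u_m)\|$, not its square, and squaring would produce $\tau^2$ weights.) The paper avoids this entirely by staying in weak form: it pairs \eqref{cn} with $\Delta(u^D_{m+1}-u_m)$, which \emph{is} in $\mathbb L^2$ by assumption. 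The left side gives $-\|\nabla(u^D_{m+1}-u_m)\|^2$ after a single integration by parts; on the right side the term $\tau\<\Delta u^D_{m+1}-\Delta u_m,\bi\Delta u^D_{m+\frac12}\>$ involves only Laplacians --- its diagonal parts drop out because $\<\Delta v,\bi\Delta v\>=0$, leaving a cross term bounded by $\tau\|\Delta u^D_{m+1}\|\,\|\Delta u_m\|\le\frac\tau2\left(\|\Delta u^D_{m+1}\|^2+\|\Delta u_m\|^2\right)$ --- and the cubic term is then estimated via Gagliardo--Nirenberg, charge conservation and Young's inequality to give $C\tau\left(\|\nabla u^D_{m+1}\|^4+\|\nabla u_m\|^4\right)$. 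So the repair is to replace ``differentiate and take norms'' by ``test against $\Delta(u^D_{m+1}-u_m)$''; with that change, the bookkeeping you describe for the nonlinearity goes through.
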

	
\textbf{Proof}
Taking inner product with $ u^D_{m+1}- u_m$ on \eqref{cn} and using Gagliardo--Nirenberg inequality, we obtain 
\begin{align*}
\| u^D_{m+1}- u_m\|^2
&=\tau\left\< u^D_{m+1}- u_m, \bi \Delta  u^D_{m+\frac12}\right\>\\
&\quad +\lambda \tau \left\< u^D_{m+1}- u_m, \bi \frac {| u^D_{m+1}|^2+| u_m|^2}2 u^D_{m+\frac12} \right\> \\
&\le C\tau\left(\|\nabla u^D_{m+1} \|^2+\|\nabla u_m \|^2\right).
\end{align*}
Taking $\mathbb L^2$-norm on both sides of \eqref{cn}, we  get
\begin{align*}
\| u^D_{m+1}- u_m\|
&\le \frac\tau2(\|\Delta  u_m\|+\|\Delta  u^D_{m+1}\|)+C\tau (\|\nabla  u^D_{m+1}\| +\|\nabla  u_m\|).
\end{align*}
Taking inner product with $\Delta(u^D_{m+1}-u_m)$ on \eqref{cn}, integrating by parts and using Gagliardo--Nirenberg inequality, we obtain 
\begin{align*}
&\|\nabla u^D_{m+1}-\nabla u_m \|^2\\
&=\left\<\Delta  u^D_{m+1}-\Delta u_m,\bi \tau \Delta  u^D_{m+\frac12} 
+\bi \tau \frac {|  u^D_{m+1}|^2+| u_m|^2}2
 u^D_{m+\frac12} \right\>\\
&\le \frac{\tau }2\left(\|\Delta  u^D_{m+1}\|^2 +\|\Delta  u_m\|^2\right)
+C\tau \left(\|\nabla u^D_{m+1}\|^4+\|\nabla u_m\|^4\right).
\end{align*}
This complete the proof.
\\\qed

\begin{lm}\label{h2-um}
For any $p\ge 1$, there exists a constant $C=C(\xi,Q, T, p)$ such that  
	\begin{align}\label{h2-um0}
	\E\left[\sup_{m\in \Z_{M}}\|u_m\|_{\HH^2}^p \right]\le C.
	\end{align}
\end{lm}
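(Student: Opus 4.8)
The plan is to mirror the continuous-time argument of Proposition \ref{h2-sta}, tracking the Lyapunov functional $f(u)=\|\Delta u\|^2+\lambda\langle\Delta u,|u|^2u\rangle$ through one full step of the scheme \eqref{spl-cn}, which splits into an implicit Crank--Nicolson step for the deterministic flow and the explicit stochastic multiplication $u_{m+1}=\exp\left(-\bi(W_{t_{m+1}}-W_{t_m})\right)u^D_{m+1}$. As in the continuous case, I would first bound $\E[f(u_m)]$ and then upgrade to the $p$-th moment of the supremum. Throughout, the $\HH^1$-a priori bound of Corollary \ref{h1-um} (equivalently the energy control of Lemma \ref{exp-um}) together with the Gagliardo--Nirenberg inequality recovers $\|u_m\|_{\HH^2}^2$ from $f(u_m)$ up to additive lower-order terms that are already uniformly integrable.

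First I would treat the deterministic Crank--Nicolson half-step. Since \eqref{cn} is implicit, there is no Itô or chain-rule shortcut, so I compute $f(u^D_{m+1})-f(u_m)$ directly. Testing \eqref{cn} against $2\Delta^2 u^D_{m+\frac12}$ reproduces $\|\Delta u^D_{m+1}\|^2-\|\Delta u_m\|^2$ on the left, and the symplectic structure makes the purely fourth-order contribution $2\tau\langle\bi\Delta^2 u^D_{m+\frac12},\Delta u^D_{m+\frac12}\rangle$ vanish; the nonlinear bracket $\lambda\langle\Delta\cdot,|\cdot|^2\cdot\rangle$ is handled by the cubic difference identity and the discrete increment bounds of Lemma \ref{discon}. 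I expect to arrive at the discrete analogue of \eqref{fud},
\begin{align*}
f(u^D_{m+1})\le e^{C\tau}f(u_m)+C\tau\bigl(1+\|\nabla u_m\|^{10}+\|\nabla u^D_{m+1}\|^{10}\bigr).
\end{align*}
The factor $\frac\tau2$ multiplying $\|\Delta u_m\|^2+\|\Delta u^D_{m+1}\|^2$ in Lemma \ref{discon} must be absorbed into the left-hand side, which is legitimate for $\tau$ small but forces me to carry both $f(u_m)$ and $f(u^D_{m+1})$ on the same footing before isolating $f(u^D_{m+1})$.

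Next I handle the stochastic half-step through the continuous extension $u_m^S$ on $T_m$ solving \eqref{nls-s} with $u_m^S(t_m)=u^D_{m+1}$ and $u_m^S(t_{m+1})=u_{m+1}$. Here Itô's formula applies verbatim as in the derivation of \eqref{uts}--\eqref{fus}: after taking expectations the martingale part vanishes, and H\"older together with Gagliardo--Nirenberg and the $\HH^1$ bound \eqref{h1-um0} give $\E[f(u_{m+1})]\le e^{C\tau}\E[f(u^D_{m+1})]+C\tau$. Combining the two half-steps and using the energy conservation $H(u_m)=H(u^D_{m+1})$ to keep the $\HH^1$ powers uniformly integrable, I obtain the recursion $\E[f(u_{m+1})]\le e^{C\tau}\E[f(u_m)]+C\tau$, to which the discrete Gronwall inequality (Lemma \ref{dis-gro}) yields $\sup_m\E[f(u_m)]\le e^{CT}(1+f(\xi))$ and hence $\sup_m\E[\|u_m\|_{\HH^2}^2]\le C$.

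Finally, to reach \eqref{h2-um0} with the supremum inside the expectation and for general $p$, I would run the same two-step analysis on $f^{p/2}$ instead of $f$, applying the Burkholder--Davis--Gundy inequality to the stochastic-step martingale exactly as in Corollary \ref{h1-sta} and Proposition \ref{h2-sta}, while the deterministic step contributes no martingale and is controlled pathwise by the bound above; the case $p\in[1,4)$ then follows by H\"older's inequality. The main obstacle is precisely the implicit deterministic step: unlike the continuous flow it admits no Itô-type cancellation, so every algebraic identity that makes $f$ an almost-conserved quantity for the NLS dynamics must be reproduced by hand from \eqref{cn}, with each discrete increment estimated through Lemma \ref{discon} and the leading $\|\Delta\cdot\|^2$ contributions carefully absorbed to close the Gronwall loop.
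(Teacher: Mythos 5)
Your proposal is correct and takes essentially the same approach as the paper: it controls the deterministic Crank--Nicolson step through the Lyapunov functional $f$ (your test function $2\Delta^2 u^D_{m+\frac12}$ yields, after integration by parts, exactly the identity the paper derives by taking the imaginary part of the complex inner product of \eqref{cn} with $\Delta(u^D_{m+1}-u_m)$), handles the nonlinearity via cubic difference identities and Lemma \ref{discon}, absorbs the $\frac{\tau}{2}\left(\|\Delta u_m\|^2+\|\Delta u^D_{m+1}\|^2\right)$ terms for $\tau<1$, treats the stochastic step by the continuous extension and It\^o's formula as in \eqref{uts}, and closes with the discrete Gronwall inequality and the $f^{p/2}$/Burkholder--Davis--Gundy upgrade for general $p$. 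The only deviations are immaterial bookkeeping details (e.g.\ the exponent $10$ versus the paper's $12$ on the $\HH^1$ remainder terms).
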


\textbf{Proof}
In terms of the $\HH^1$-a priori estimate \eqref{h1-um0} in Corollary \ref{h1-um}, we focus on the a priori estimate in $\HH^2$.
	Let $m\in \Z_M$.
	Taking  complex inner product on both sides of \eqref{cn} with $\Delta (u^{D}_{m+1}-u_m)$,
	and then taking the imaginary part, we obtain 
	\begin{align*}
	& \|\Delta u^{D}_{m+1}\|^2-\|\Delta  u_m\|^2 \\
	&=-\frac { \lambda}2\left\<\Delta (u^{D}_{m+1} - u_m),   (| u_m|^2+|u^{D}_{m+1}|^2) (u_{m}+u^D_{m+1})   \right\>.
	\end{align*}
	 By the identity $(|a|^2+|b|^2)(a+b)=2|a|^2a+2|b|^2b-(|b|^2-|a|^2)(b-a)$, $a,b\in \mathbb C$ we have 
	\begin{align*}
	&\left\<\Delta (u^{D}_{m+1} - u_m),   (| u_m|^2+|u^{D}_{m+1}|^2) (u_m+u^D_{m+1})   \right\>\\
	&=2\left\<\Delta u^{D}_{m+1},  |u^{D}_{m+1}|^2u^{D}_{m+1}\right\>
	-
	2\left\<\Delta   u_m,   |u_m|^2 u_m\right\>\\
	&\quad+\frac12\left\<\Delta u^{D}_{m+1}-\Delta   u_m,   \left(|u^{D}_{m+1}|^2-|u_m|^2\right)\left(  u_m-u^{D}_{m+1}\right)\right\>\\
	&\quad -\left\<\Delta u^{D}_{m+1} , \left(|u^{D}_{m+1}|^2u^{D}_{m+1}-| u_m|^2u_m\right) \right\>\\
	&\quad + \left\<\Delta u^{D}_{m+1}-\Delta u_m , | u^{D}_{m+1}|^2 u^{D}_{m+1}\right\>.
	\end{align*}
Recalling the definition \eqref{h2-lya} of the Lyapunov functional $f$, we have  
	\begin{align*}
&f( u^{D}_{m+1})- f( u_m)\\
&=\|\Delta u^{D}_{m+1}\|^2-\|\Delta  u_m\|^2  \\
&\quad +\lambda \left[\<\Delta u^{D}_{m+1}, |u^{D}_{m+1}|^2 u^{D}_{m+1}\> 
-\<\Delta u_m, |u_m|^2 u_m\>\right]  \\
&=\lambda  \left\<\Delta u^{D}_{m+1} , \left(|u^{D}_{m+1}|^2u^{D}_{m+1}-|u_m|^2u_m\right) \right\>  \\ 
&\quad -\lambda  \left\<\Delta(u^{D}_{m+1}-u_m),
 | u^{D}_{m+1}|^2 u^{D}_{m+1}\right\>\\
&\quad +\frac\lambda 2\left\<\Delta(u^{D}_{m+1}-u_m), \left(|u^{D}_{m+1}|^2-|u_m|^2\right)\left(  u_m-u^{D}_{m+1}\right)\right\>.
	\end{align*}
	By cubic difference formula, we 
	have 
	\begin{align*}
	\small
	&\left\<\Delta u^{D}_{m+1} ,  \left(|u^{D}_{m+1}|^2u^{D}_{m+1}-|u_m|^2u_m\right) \right\> \\
	&= \left\<\Delta u^{D}_{m+1} , \left(|u^{D}_{m+1}|^2+| u_m|^2\right)\left(u^{D}_{m+1}-  u_m\right)\right\>  \\
	&\quad + \left\<\Delta u^{D}_{m+1} , u^{D}_{m+1} u_m\left(\overline {u^{D}_{m+1}}-\overline {  u_m}\right)\right\>\\
	&=  \left\<\Delta u^{D}_{m+1} ,   \left(|u^{D}_{m+1}|^2+| u_m|^2\right)\left(u^{D}_{m+1}- u_m\right)\right\> \\
	&\quad -\left\<\Delta u^{D}_{m+1} , u^{D}_{m+1}|u^{D}_{m+1}- u_m|^2\right\>\\
	&\quad  +\left\<\Delta u^{D}_{m+1} ,   (u^{D}_{m+1})^2 \left(\overline {u^{D}_{m+1}}-\overline { u_m}\right)\right\>.
	\end{align*}
	On the other hand, integration by parts and using the equality $\Delta(|u|^2u)=2|u|^2 \Delta u+4|\nabla u|^2u+2(\nabla u)^2\overline u+(u)^2\Delta \overline u$ yield that
	\begin{align*}
	&\left\<\Delta u^{D}_{m+1}-\Delta  u_m , | u^{D}_{m+1}|^2 u^{D}_{m+1}\right\>\\
	&=\left\<u^{D}_{m+1}- u_m , \left(2| u^{D}_{m+1}|^2 \Delta u^{D}_{m+1} + \Delta \overline{u^{D}_{m+1}} (u^{D}_{m+1})^2\right) \right\>\\
	&\quad +\left\<u^{D}_{m+1}-  u_m ,  \left( 4|\nabla u^{D}_{m+1}|^2  u^{D}_{m+1} + 2(\nabla \overline{u^{D}_{m+1}})^2 u^{D}_{m+1}\right) \right\>.
	\end{align*}
Therefore, 
	\begin{align*}
	&f( u^{D}_{m+1})- f(u_m)\\
	&= \lambda \left\<\Delta u^{D}_{m+1} , \left(-|u^{D}_{m+1}|^2+|u_m|^2\right)\left(u^{D}_{m+1}-  u_m\right)\right\>\\
	&\quad -\lambda \left\<\Delta u^{D}_{m+1} ,  u^{D}_{m+1}|u^{D}_{m+1}- u_m|^2\right\>\\
	&\quad -\lambda \left\<u^{D}_{m+1}- u_m ,
	 4\left(|\nabla u^{D}_{m+1}|^2  u^{D}_{m+1}+2(\nabla \overline{u^{D}_{m+1}})^2 u^{D}_{m+1}\right) \right\> \\
	&\quad -\frac \lambda  2\left\<\Delta (u^{D}_{m+1}-u_m),    \left(|u^{D}_{m+1}|^2-|u_m|^2\right)\left(  u_m-u^{D}_{m+1}\right)\right\>  \\
	&:=II^m_1+II^m_2+II_3^m+II_4^m.
	\end{align*}
By H\"older, Young and Gagliardo--Nirenberg inequalities,  we obtain 
	\begin{align*}
	II^m_1&\le \|u^{D}_{m+1}- u_m \|_{L_4}^2\|\Delta u^{D}_{m+1}\|(\|u^{D}_{m+1}\|_{L_{\infty}}+\|u_m\|_{L_{\infty}})\\
	&\le  \frac \tau {16} \|\Delta u^{D}_{m+1}\|^2+\frac C\tau  \|\nabla u^{D}_{m+1}- \nabla  u_m \|
	\|u^{D}_{m+1}-   u_m \|^3 \\
	&\quad \times (\|u^{D}_{m+1}\|^2_{L_{\infty}}+\|u_m\|^2_{L_{\infty}})\\
	&\le  \frac \tau {16}\|\Delta u^{D}_{m+1}\|^2+\frac 14 \|\nabla u^{D}_{m+1}- \nabla u_m \|^2 \\
&\quad +C\tau \left(\|\nabla u^{D}_{m+1}\|^4+\|\nabla u_m\|^4\right)
+\frac 1 {\tau^5} \|u^{D}_{m+1}-  u_m \|^{12}.
	\end{align*}
	Similarly, we get 
	\begin{align*}
	II^m_2+II_4^m
	&\le  \frac \tau{16}(\|\Delta u^{D}_{m+1}\|^2+ \|\Delta u_m\|^2)+\frac 14 \|\nabla u^{D}_{m+1}- \nabla u_m \|^2\\
	&\quad+C\tau \left(\|\nabla u^{D}_{m+1}\|^4+\|\nabla u_{m}\|^4\right)
	+\frac 1 {\tau^5} \|u^{D}_{m+1}- u_m \|^{12}.
	\end{align*}
	Lemma \ref{discon} and the charge and  energy conservation laws of \eqref{cn} yield that
	\begin{align*}
	II_3^m
	&\le  C\|u^{D}_{m+1}-  u_m \|\|\nabla u^{D}_{m+1}\|^2\|u^{D}_{m+1}\|_{L_{\infty}}\\
	&\le C\tau\|\nabla u^{D}_{m+1}\|^{\frac 52}\left(\|\Delta u^{D}_{m+\frac12}\|+\|\nabla u^{D}_{m+1} \|+\|\nabla u_m\|\right)\\
	&\le \frac \tau 2 \|\Delta u^{D}_{m+\frac12}\|^2+C\tau (\|\nabla u^{D}_{m+1}\|^5+\|\nabla u^{D}_{m+1}\|^{\frac 72}+ \|\nabla u^{D}_{m+1}\|^{\frac 52}\|\nabla  u_m\|).
	\end{align*}
	 Again by Lemma \ref{discon}, we conclude that 
	\begin{align*}
	f( u^{D}_{m+1})
	&\le  f(  u_m)
	+ \frac12 \tau (\|\Delta u_m\|^2 +\|\Delta u^{D}_{m+1}\|^2) \\
&\quad +C \tau \left(1+\|\nabla u^{D}_{m+1}\|^{12}+\|\nabla u_m\|^{12}\right).
	\end{align*}
By \eqref{h2-lya}, integrating by parts and Sobolev inequality, we achieve
\begin{align*}
f( u^{D}_{m+1})
&\le  \frac{1+\frac\tau 2}{1-\frac\tau 2}f(  u_m)
+\frac{C\tau}{1-\frac\tau 2}\left(1+\|\nabla u^{D}_{m+1}\|^{12}+\|\nabla u_m\|^{12}\right).
\end{align*}
The fact $\tau<1$ implies
		\begin{align*}
		f( u^{D}_{m+1})
		&\le (1+2\tau) f(  u_m)
		+C \tau \left(1+\|\nabla u^{D}_{m+1}\|^{12}+\|\nabla u_m\|^{12}\right).
		\end{align*}
Notice that $u_m$ can be extend to a continuous process $u^S_m(t)$ as the solution of Eq. \eqref{nls-s} with initial data $u^{D}_{m+1}$ in $ T_m$.
The same arguments to derive \eqref{uts} lead to
\begin{align*}
\E \left[f(u_m^S(t))\right]-\E \left[f(u^{D}_{m+1})\right]
\le C \tau+C\int_{t_m}^t \E\left[ f(u_m^S(r))\right] dr,
\quad t\in T_m.
\end{align*}
Gronwall inequality and the above two inequalities imply
\begin{align*}
\E \left[f(u_{m+1})\right]
&\le e^{C\tau} \left( (1+2\tau)\E \left[f( u_{m})\right]+C \tau \right).
\end{align*}
Then Lemma \ref{dis-gro} yields
\begin{align*}
\E \left[f(u_{m+1})\right]
&\le C e^{CT}\left(1+f(\xi)\right),
\end{align*}
which in turn shows 
\begin{align*}
\sup_{m\in \Z_{M}}\E \left[\|\Delta u_m \|^2\right]\le C.
\end{align*}
Similar arguments to derive \eqref{h2-sta0} in Proposition \ref{h2-sta} complete the proof of \eqref{h2-um0}.
\\\qed

\begin{cor}\label{h2-umd}
For any $p\ge1$, there exist a constant $C=C(\xi,Q,T,p)$ such that
\begin{align}\label{h2-umd0}
	\E\left[\sup_{m\in \Z_{M}\setminus \{0\} }\|u_m^D\|_{\HH^2}^p \right]\le C.
	\end{align}
\end{cor}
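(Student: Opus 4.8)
The plan is to read the estimate off the pathwise inequality that is already produced inside the proof of Lemma \ref{h2-um}, namely
\begin{align*}
f( u^{D}_{m+1})
\le (1+2\tau) f(  u_m)
+C \tau \left(1+\|\nabla u^{D}_{m+1}\|^{12}+\|\nabla u_m\|^{12}\right),
\quad m\in\Z_{M-1}.
\end{align*}
This bounds the Lyapunov functional of the intermediate state $u^{D}_{m+1}$ in terms of $f(u_m)$ and of $\HH^1$-quantities. Since Lemma \ref{h2-um} has \emph{already} supplied the full $\HH^2$-bound for $\{u_m\}$, no fresh iteration over $m$ is needed, in contrast to the continuous counterpart Corollary \ref{h2-ud}; one only has to feed the single-step inequality into the bound for $\{u_m\}$ and then pass from $f$ back to $\|\Delta\cdot\|^2$.

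The one missing ingredient is control of $\|\nabla u^{D}_{m+1}\|^{12}$, i.e.\ an $\HH^1$-a priori estimate for the intermediate states themselves. I would obtain this from the charge and energy conservation of the Crank--Nicolson step, $\|u^{D}_{m+1}\|=\|u_m\|$ and $H(u^{D}_{m+1})=H(u_m)$, recorded just after \eqref{spl-cn}. Combining these identities with the Gagliardo--Nirenberg estimate of $\|\cdot\|_{L_4}^4$ and Young's inequality, handled separately in the focusing and defocusing cases exactly as in the relation between $H$ and $\|\nabla\cdot\|^2$ used to derive \eqref{exp-u}, yields $\|\nabla u^{D}_{m+1}\|^2\le C(1+\|\nabla u_m\|^2)$ almost surely; Corollary \ref{h1-um} then gives $\E[\sup_{m}\|u^{D}_{m+1}\|_{\HH^1}^q]\le C$ for every $q\ge1$.

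With this in hand, I would combine the pathwise inequality with the elementary two-sided relation between $f$ and $\|\Delta\cdot\|^2$ (using the one-dimensional embedding $\HH^1\hookrightarrow L_\infty$, so that $|\<\Delta u,|u|^2u\>|\le\tfrac12\|\Delta u\|^2+C\|u\|_{\HH^1}^6$), which turns it into the pathwise bound
\begin{align*}
\|\Delta u^{D}_{m+1}\|^2
\le C\|\Delta u_m\|^2
+C\Big(1+\|u^{D}_{m+1}\|_{\HH^1}^{12}+\|u_m\|_{\HH^1}^{12}\Big),
\quad m\in\Z_{M-1},
\end{align*}
where $\tau<1$ and $\tau\le T$ were used to absorb the factors $(1+2\tau)$ and $C\tau$ into the constant. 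Taking the supremum over $m$ and then $\tfrac p2$-moments, the right-hand side is controlled by Lemma \ref{h2-um} (for $\sup_m\|\Delta u_m\|^p$) together with Corollary \ref{h1-um} and the $\HH^1$-estimate just established, which yields \eqref{h2-umd0}. The only genuine obstacle is the $\HH^1$-a priori estimate for the intermediate states; everything after it is bookkeeping resting on the already-proved Lemma \ref{h2-um}.
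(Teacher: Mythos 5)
Your proposal is correct and matches the paper's intent: the paper's proof of Corollary \ref{h2-umd} is literally the one-line remark that it ``follows immediately from the proof of \eqref{h2-um0}'', and what you have written out---extracting the pathwise one-step inequality $f(u^D_{m+1})\le(1+2\tau)f(u_m)+C\tau(1+\|\nabla u^D_{m+1}\|^{12}+\|\nabla u_m\|^{12})$ from that proof and combining it with Lemma \ref{h2-um}, Corollary \ref{h1-um}, and the two-sided relation between $f$ and $\|\Delta\cdot\|^2$---is precisely the argument being invoked. Your explicit $\HH^1$-control of the intermediate states via the discrete charge and energy conservation $\|u^D_{m+1}\|=\|u_m\|$, $H(u^D_{m+1})=H(u_m)$ is exactly the ingredient the paper uses implicitly (it is asserted just after \eqref{spl-cn} and silently absorbs the $\E[\|\nabla u^D_{m+1}\|^{12}]$ terms in the iteration), so your write-up is a faithful, slightly more detailed rendering of the same proof.
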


\textbf{Proof}
The proof of \eqref{h2-umd0} follows immediately from the proof of \eqref{h2-um0}.
\qed

Based on Lemmas \ref{exp-um} and \ref{h2-um}, in the rest of this section we prove that the splitting Crank--Nicolson scheme \eqref{spl-cn} possesses strong convergence rate $\OO(\tau^{\frac12})$.
To the best of our knowledge, this is the first result about strong convergence rate of temporal discretization for SPDEs with non-monotone coefficients.

We begin with the following lemma which is a discrete version of Lemma \ref{u-ut-exp}.

\begin{lm}\label{uutexp}
For any $p\ge 1$, there exist a constant $C=C(\xi,Q,T,p)$ such that
\begin{align*}
\left\|\exp\left(2\tau\sum_{m\in\Z_{M}}\left\|u_\tau(t_m)\right\|_{L_{\infty}}\left\|u_m\right\|_{L_\infty}\right)\right\|_{\mathbb L^p(\Omega)}
\le C.
\end{align*}
\end{lm}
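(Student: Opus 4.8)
The plan is to mimic the proof of Lemma~\ref{u-ut-exp} line by line, substituting the two charge conservation laws and the \emph{discrete} exponential integrability for their continuous analogues. First I would pass from the $\mathbb L^\infty$ norms to the gradient norms via the one-dimensional Gagliardo--Nirenberg inequality $\|v\|_{\mathbb L^\infty}\le C\|v\|^{1/2}\|\nabla v\|^{1/2}$ on $\OOO=(0,1)$. Since $\|u_\tau(t_m)\|=\|\xi\|$ by \eqref{cha-spl} and $\|u_m\|=\|\xi\|$ by \eqref{cn-cha}, this gives for each $m\in\Z_M$
\[
2\|u_\tau(t_m)\|_{\mathbb L^\infty}\|u_m\|_{\mathbb L^\infty}
\le C\|\xi\|\,\|\nabla u_\tau(t_m)\|^{1/2}\|\nabla u_m\|^{1/2}
\le C\|\xi\|\big(\|\nabla u_\tau(t_m)\|+\|\nabla u_m\|\big),
\]
where the last step is Young's inequality. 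Exponentiating the $\tau$-weighted sum and applying H\"older's inequality in $\Omega$ then splits the target quantity into the product of $\big\|\exp\big(C\|\xi\|\tau\sum_{m\in\Z_M}\|\nabla u_\tau(t_m)\|\big)\big\|_{\mathbb L^{2p}(\Omega)}$ and the same expression with $u_m$ in place of $u_\tau(t_m)$, so it suffices to bound one such factor.

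For a single factor I would again invoke Young's inequality, now with a free parameter $\theta>0$, to write $C\|\xi\|\|\nabla u_\tau(t_m)\|\le \theta\|\xi\|^2+\theta^{-1}C^2\|\nabla u_\tau(t_m)\|^2$. Summing against $\tau$, the deterministic part contributes $\theta\|\xi\|^2\tau(M+1)\le 2\theta\|\xi\|^2 T$, a constant, while the random part is $\theta^{-1}C^2\tau\sum_m\|\nabla u_\tau(t_m)\|^2$. Raising to the $2p$-th power inside the $\mathbb L^{2p}(\Omega)$ norm and choosing $\theta$ proportional to $p(T+\tau)e^{\alpha T}$ makes the coefficient of each $\|\nabla u_\tau(t_m)\|^2$ equal to $e^{-\alpha T}/(M+1)$; a \emph{discrete} Jensen inequality with the uniform weights $1/(M+1)$ then yields
\[
\E\Big[\exp\Big(\tfrac{1}{M+1}\sum_{m}\tfrac{\|\nabla u_\tau(t_m)\|^2}{e^{\alpha T}}\Big)\Big]
\le \frac{1}{M+1}\sum_{m\in\Z_M}\E\Big[\exp\Big(\tfrac{\|\nabla u_\tau(t_m)\|^2}{e^{\alpha t_m}}\Big)\Big],
\]
where I used $e^{\alpha T}\ge e^{\alpha t_m}$. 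The right-hand side is bounded by the exponential integrability estimate \eqref{exp-ut0}; the companion factor is controlled in exactly the same way by \eqref{exp-um1}.

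The only genuinely delicate point is bookkeeping rather than analysis: one must choose the Young parameter $\theta$ so that it stays bounded uniformly in $\tau$ (equivalently in $M$) while the exponent produced after the discrete Jensen step matches \emph{exactly} the quantity $\|\nabla\cdot\|^2/e^{\alpha t_m}$ appearing in \eqref{exp-ut0} and \eqref{exp-um1}. Because $\tau(M+1)=T+\tau\le 2T$, this is achievable with $\theta$ independent of $\tau$, and all emerging constants depend only on $\xi$, $Q$, $T$ and $p$. The substantive work, namely the uniform-in-$m$ exponential integrability of the scheme $\{u_m\}_{m\in\Z_M}$, has already been carried out in Lemma~\ref{exp-um}, so recombining the two factors via the H\"older splitting above completes the proof.
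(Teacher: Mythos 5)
Your proof is correct and follows essentially the same route as the paper: Gagliardo--Nirenberg together with the charge conservation laws \eqref{cha-spl} and \eqref{cn-cha} reduces the $\mathbb L^\infty$ products to gradient norms, Young's inequality with a $p$- and $T$-dependent parameter extracts a deterministic constant, and the $m$-sum is decoupled so that the exponential integrability estimates \eqref{exp-ut0} and \eqref{exp-um1} can be invoked. The only cosmetic difference is that you decouple the sum via discrete Jensen with uniform weights $1/(M+1)$, whereas the paper uses the generalized H\"older inequality with $\mathbb L^{2(M+2)p}(\Omega)$ norms on a product of exponentials; the two devices play identical roles here.
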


\textbf{Proof}
Applying H\"older, Young and Gagliardo--Nirenberg inequalities and using the charge conservation laws \eqref{cha-spl} and \eqref{cn-cha} and Jensen inequality, we have 
\begin{align*}
&\left\|\exp\left(2\tau\sum_{m\in\Z_{M+1}}\left\|u_\tau(t_m)\right\|_{L_{\infty}}\left\|u_m\right\|_{L_\infty}\right)\right\|_{\mathbb L^p(\Omega)}\\
&\le \exp\left(4pT^2e^{\alpha T}\|\xi\|^2\right)\prod_{m\in \Z_{M+1}} \left\|\exp\left(\frac{\left\|\nabla u_\tau(t_m)\right\|^2\tau}
{2pTe^{\alpha T}} \right)\right\|_{\mathbb L^{2(M+2)p}(\Omega)} \\
&\quad \times \prod_{m\in \Z_{M+1}} \left\|\exp\left(\frac{\left\|\nabla u_m\right\|^2\tau} {2pTe^{\alpha T}} \right)\right\|_{\mathbb L^{2(M+2)p}(\Omega)}\\
&\le C\left(\sup_{m\in \Z_{M+1}} \E\left[\exp\left(\frac {\left\|\nabla u_m\right\|^2} {e^{\alpha T}}\right)\right]
\cdot 
\sup_{t\in [0,T]} \E\left[\exp\left(\frac {\left\|\nabla u_\tau(t)\right\|^2}{e^{\alpha T}}\right)\right]  \right)^{\frac 1{2p}},
\end{align*}
where $\alpha$ is the parameter appearing in  \eqref{exp-ut0} or \eqref{exp-um1}. 
We complete the proof by the exponential integrability of $u_\tau$ and $u_m$ in Lemmas \ref{exp-ut} and \ref{exp-um}, respectively.
\qed
For the convenience, 
we can also define the continuous extension of $u_m$ as 
\begin{align*}
\widehat u_{\tau}(t):=\widehat u_{\tau,m}^S(t):=(\Phi_{j,t-t_m}^S\widehat {\Phi_{j,\tau}^D})\prod_{j=1}^{m-1}\big(\Phi_{j,\tau}^S\widehat {\Phi_{j,\tau}^D}\big)u_\tau(0),\quad t\in T_m,  
\end{align*}
where $\widehat {\Phi_{j,\tau}^D}$ is the solution operator of 
the Crank--Nicolson scheme.

\begin{tm}\label{u-um}
For any $p\ge 1$, there exist a constant $C=C(\xi,Q,T,p)$ such that
\begin{align}\label{u-um0}
\left(\E \left[\sup_{m\in \Z_{M}} \|u(t_m)-u_m\|^p \right]\right)^\frac1p
\le C \tau^\frac12.
\end{align}
\end{tm}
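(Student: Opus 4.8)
The plan is to compare $u_m$ with the splitting process $u_\tau$ of Section~\ref{sec-spl} rather than with $u$ directly, since the stochastic part of both discretizations is identical and the delicate stochastic analysis has already been carried out in Theorem~\ref{u-ut}. First I would invoke the triangle inequality
$$\|u(t_m)-u_m\|\le \|u(t_m)-u_\tau(t_m)\|+\|u_\tau(t_m)-u_m\|,$$
and bound the first summand by $C\tau^{1/2}$ in $\mathbb L^p(\Omega)$ using Theorem~\ref{u-ut}. Everything then reduces to controlling $\delta_m:=u_\tau(t_m)-u_m$, the error between the exact deterministic flow $\Phi^D$ and its Crank--Nicolson approximation $\widehat{\Phi^D}$, propagated through the common stochastic flow $\Phi^S$.

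The key structural observation is that $\Phi_{m,\tau}^S$ is multiplication by $\exp(-\bi(W(t_{m+1})-W(t_m)))$, hence an $\mathbb L^2$-isometry applied to the \emph{same} noise realization in both schemes. Therefore
$$\|\delta_{m+1}\|=\big\|\Phi_{m,\tau}^D u_\tau(t_m)-\widehat{\Phi_{m,\tau}^D}u_m\big\|\le A_m+B_m,$$
where $A_m:=\|\Phi_{m,\tau}^D u_\tau(t_m)-\widehat{\Phi_{m,\tau}^D}u_\tau(t_m)\|$ is the one-step consistency error of Crank--Nicolson for the deterministic equation \eqref{nls-d} and $B_m:=\|\widehat{\Phi_{m,\tau}^D}u_\tau(t_m)-\widehat{\Phi_{m,\tau}^D}u_m\|$ is a nonlinear stability term. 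For $A_m$ I would use the $\HH^2$ a priori bounds on $u_\tau^D$ (Corollary~\ref{h2-ud}) together with the increment estimates of Lemma~\ref{discon} to obtain a local error of order $\tau^{3/2}$ (or better) with all moments finite. For $B_m$, subtracting the two defining relations \eqref{cn} and pairing the resulting increment with the difference of the corresponding midpoint states $u^D_{m+\frac12}$ in the real $\mathbb L^2$ inner product, the skew-symmetric Laplacian term drops out and the cubic difference formula yields $B_m\le(1+C\tau K_m)\|\delta_m\|$ with $K_m\sim\|u_\tau(t_m)\|_{\mathbb L^\infty}^2+\|u_m\|_{\mathbb L^\infty}^2$.

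Combining these, $\|\delta_{m+1}\|^2\le(1+C\tau K_m)\|\delta_m\|^2+p_m$ with $\sum_m p_m=O(\tau)$, and the discrete Gronwall inequality (Lemma~\ref{dis-gro}) gives a pathwise bound of the form $\|\delta_m\|^2\le C\exp\!\big(C\tau\sum_n K_n\big)\sum_n p_n$. Taking $\mathbb L^{p/2}(\Omega)$-norms, I would split via H\"older into an exponential factor and a polynomial factor exactly as in the proof of Theorem~\ref{u-ut}: the exponential factor $\exp\!\big(C\tau\sum_n K_n\big)$ has finite moments by the discrete exponential-integrability estimate of Lemma~\ref{uutexp}, while $\sum_n p_n$ carries the $\HH^2$-moments controlled by Lemma~\ref{h2-um} and Corollary~\ref{h2-umd}. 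This yields $\|\delta_m\|\le C\tau^{1/2}$ in $\mathbb L^p(\Omega)$, and the theorem follows.

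I expect the main obstacle to be the nonlinear $\mathbb L^2$-stability term $B_m$: because the Crank--Nicolson map is implicit and cubic, its one-step Lipschitz constant carries the random factor $K_m$, which is only exponentially integrable (through the $\HH^1$-energy) rather than uniformly bounded, so the naive pathwise estimate $(1-C\tau K_m)>0$ cannot be guaranteed step by step. The resolution is precisely to postpone taking expectations until after the discrete Gronwall step and then exploit Lemma~\ref{uutexp}, mirroring the interplay of exponential integrability and H\"older splitting that made Theorem~\ref{u-ut} work; establishing the consistency estimate $A_m$ with only $\HH^2$ regularity of the deterministic flow is a secondary technical point.
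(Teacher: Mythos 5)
Your overall architecture --- the triangle inequality through the splitting process $u_\tau$, the observation that the stochastic step is a modulus-preserving multiplication so the error passes isometrically through $\Phi^S$, the pathwise discrete Gronwall step, and the final H\"older splitting using Lemma \ref{uutexp} together with the $\HH^2$-moment bounds --- matches the paper's proof. But there is a genuine gap at the core of your argument: the claim that the one-step consistency error $A_m=\|\Phi_{m,\tau}^D u_\tau(t_m)-\widehat{\Phi_{m,\tau}^D}u_\tau(t_m)\|$ is $O(\tau^{3/2})$ under $\HH^2$ a priori bounds. It is only $O(\tau)$, and this is sharp already for the free linear flow: writing $R_\tau=(1-\tfrac{\bi\tau}{2}\Delta)^{-1}(1+\tfrac{\bi\tau}{2}\Delta)$ for the Crank--Nicolson propagator, one has $\|(e^{\bi\tau\Delta}-R_\tau)v\|\le C\tau\|v\|_{\HH^2}$, with this order attained by data concentrated at frequencies $\lambda_k\sim\tau^{-1}$; an $O(\tau^{3/2})$ bound would require $\HH^3$ data, which neither Corollary \ref{h2-ud} nor the standing assumptions ($\xi\in\HH_0^1\cap\HH^2$, $Q^{1/2}\in\LL_2^2$) provide, and Lemma \ref{discon} only controls the increment $\|u^D_{m+1}-u_m\|$, not the deviation from the exact flow. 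With $A_m=\Theta(\tau)$, your recursion yields only $\|\delta_M\|\lesssim\sum_m A_m=O(1)$, i.e.\ no rate at all.

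The paper succeeds precisely because it never measures consistency in the $\mathbb L^2$ norm. It expands $\|\widehat e_{m+1}\|^2=\|\widehat e_m\|^2+2\<\widehat e_m,\text{incr}\>+\|\text{incr}\|^2$ and estimates the cross term in weak form: the leading piece $\tau\<\widehat e_m,\bi\Delta\widehat e_m\>$ vanishes identically by skew-symmetry, the remaining Laplacian contributions are integrated by parts onto $\widehat e_m$ (bounded in $\HH^2$) and paired against quantities that are $O(\tau^2)$ in $\mathbb L^2$ by Lemma \ref{discon} and the $\mathbb L^2$-time-regularity of $u^D_\tau$ (which needs only $\HH^2$ bounds, unlike time-regularity of $\Delta u^D_\tau$), and the quadratic term $\|\text{incr}\|^2$ is $O(\tau^2)$ outright. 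Taking norms before exploiting this cancellation, as your $A_m+B_m$ split does, destroys exactly the structure that makes the $\tau^{1/2}$ rate reachable at $\HH^2$ regularity. A secondary problem: your stability bound $B_m\le(1+C\tau K_m)\|\delta_m\|$, obtained by pairing with the midpoint, requires inverting a factor $(1-C\tau K_m)$ pathwise; on the event $\{C\tau K_m\ge 1\}$ the resulting inequality is vacuous \emph{before} any expectation is taken, so postponing expectations does not repair it. The paper's pairing with $\widehat e_m$ is an identity rather than an inversion and therefore sidesteps this issue entirely.
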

\textbf{Proof}
	We only prove the case $p=2$, since the proof for other cases is similar.
The strong error can be split as 
\begin{align*}
&\E \left[\sup_{m\in \Z_{M}} \|u(t_{m})-u_{m}\|^{2}\right] \\
&\le 2 \E \left[\sup_{m\in \Z_{M}} \|u(t_{m})-u_\tau(t_{m})\|^{2}\right]
+2\E \left[\sup_{m\in \Z_{M}} \|u_\tau(t_{m})-u_{m}\|^{2}\right].
\end{align*}
	Denote by $\widehat  e_{m+1}:=u(t_{m+1})-u_{m+1}$, $m\in \Z_{M-1}$. Applying It\^o formula to $\|\widehat e_{m+1}\|^2$ in $T_m$,  we obtain 
\begin{align*}
&\|\widehat e_{m+1}\|^2 
=\|u^{S}_{\tau, m}(t_{m})-\widehat u^{S}_{\tau, m}(t_{m})\|^{2}\\
&=\|\widehat e_m\|^{2}
+2\<\widehat e_m, \bi \int_{t_m}^{t_{m+1}}  \Delta u_\tau^{D}(s)-\Delta u^{D}_{m+\frac12}ds\>\\
&\quad+2\<\widehat e_m, \bi \lambda\int_{t_m}^{t_{m+1}}|u_{\tau, m}^{D}|^2u_{\tau, m}^{D}-\frac {|u_m|^2+|u^{D}_{m+1}|^2}2 u^{D}_{m+\frac12}ds\>
\\
&\quad+\bigg\|\int_{t_m}^{t_{m+1}}  
\Delta\left[u^{D}_{\tau, m}-u^{D}_{m+\frac12}\right]
+|u^{D}_\tau|^2u^{D}_\tau
-\frac {|  u_m|^2+|u^{D}_{m+1}|^2}2 u^{D}_{m+\frac12}ds\bigg\|^2\\
&:=\|\widehat e_m\|^{2}+III^m_1+III^m_2+III^m_3.
\end{align*}
Integration by parts, H\"older and Sobolev inequalities yield that 
	\begin{align*}
	III^m_1&=\left\<\Delta \widehat e_m,- 2\int_{t_m}^{t_{m+1}}\int_{t_m}^s \Delta u^{D}_{\tau, m}(r) + \lambda| u^{D}_{\tau, m}(r)|^2u^{D}_{\tau, m}(r)   dr ds\right\> \\
	&\quad+2\tau^2\left\<\Delta \widehat e_m,  \Delta u^D_{m+\frac12}
	+ \frac {|  u_m|^2+|u^{D}_{m+1}|^2}2 u^{D}_{m+\frac12}\right\>\\
	&\le C\tau^2\Big(\sup_{t\in T_M}\|u_{\tau, m}^D(t)\|^2_{\HH^2}
	+\|u_m\|_{\HH^2}^2+\|u_{m+1}^D\|^2_{\HH^2} \Big).
	\end{align*}
	For the term $III^m_2$, using cubic difference formula, Gagliardo--Nirenberg inequality, the charge and energy conservation law of Eq. \eqref{nls-d} and  Lemma \ref{discon}, we have 
	\begin{align*}
	III^m_2&=
	2\left\< \widehat e_m,\bi \lambda\int_{t_m}^{t_{m+1}} |u_{\tau, m}^{D}(s)|^2u_{\tau, m}^{D}(s)-|u^{D}_{\tau, m}(t_m)|^2u^{D}_{\tau, m}(t_m) ds\right\>\\
	&\quad+
	2\left\< \widehat e_m,\bi \lambda\int_{t_m}^{t_{m+1}}|u^{D}_{\tau, m}(t_m)|^2 u^{D}_{\tau, m}(t_m)-|  u_m|^2  u_m ds\right\>\\
	&\quad+2\left\< \widehat e_m,\bi \lambda\int_{t_m}^{t_{m+1}} |  u_m|^2  u_m-\frac {|  u_m|^2+|u^{D}_{m+1}|^2}2 u^{D}_{m+\frac12}ds\right\>\\ 
	&\le 2\tau \|\widehat e_m\|^2+C\int_{t_m}^{t_{m+1}}\left\| u_{\tau, m}^{D}(s)\right\|_{\HH^1}^2\left\|u_\tau^{D}(s)- u_{\tau, m}^{D}(t_m)\right\|^2ds\\
	&\quad+2\tau \left\|u_{\tau, m}^{D}(t_m)\right\|_{L_{\infty}}\left\|u_m\right\|_{L_\infty}\|\widehat e_m\|^2\\
	&\quad+C\tau\left(\left\|  u_m\right\|_{\HH^1}^2+\left\| u^{D}_{m+1}\right\|_{\HH^1}^2\right)\left\|u^{D}_{m+1}- u_{m}\right\|^2 \\
&\le \tau(2+2\left\|u_\tau(t_m)\right\|_{L_{\infty}}\left\|u_m\right\|_{L_\infty})\|\widehat e_m\|^2\\
	&\quad+C\tau^3\left(1+\sup_{t\in T_m}\| u^{D}_\tau(r)\|_{\HH^2}^4
	+\|u^{D}_{m+1} \|_{\HH^2}^6+\|u_{m} \|_{\HH^2}^6\right).
	\end{align*}
Analogously, 
	\begin{align*}
	III^m_3
	&\le C\tau^2 \Big(\sup_{t\in T_m}\| u^D_{\tau, m}(t)\|_{\HH^2}^2+\| u_m\|_{\HH^2}^2+\| u_{m+1}^D\|_{\HH^2}^2\Big).
	\end{align*}		
Summing up the estimations of $III^m_1$-$III^m_3$, we get
\begin{align*}
\|\widehat e_{m+1}\|^2
&\le \|\widehat e_m\|^{2}+\tau(2+2\left\|u_\tau(t_m)\right\|_{L_{\infty}}\left\|u_m\right\|_{L_\infty})\|\widehat e_m\|^2\\
&\quad+C\tau^2 \left(1+\sup_{t\in T_m}\| u^{D}_{\tau, m}(t)\|_{\HH^2}^4
+\|u^{D}_{m+1} \|_{\HH^2}^6+\|u_{m} \|_{\HH^2}^6\right).
\end{align*}
Applying Lemma \ref{dis-gro}, we have
\begin{align*}
&\|\widehat e_{m+1}\|^2 
\le C\tau\exp\left(2T+2\tau\sum_{n=0}^m \left\|u_\tau(t_n)\right\|_{L_{\infty}}\left\|u_n\right\|_{L_\infty}\right)  \\
&\quad \times \left(1+\sup_{t\in [0,T]}\| u^{D}_\tau(t)\|_{\HH^2}^4
+\sup_{m\in \Z_M}\|u^{D}_{m+1} \|_{\HH^2}^6+\sup_{m\in \Z_{M+1}}\|u_{m} \|_{\HH^2}^6\right).
\end{align*}
Then taking expectations on both sides and using H\"older inequality, we obtain
\begin{align*}
&\E \left[\sup_{m\in \Z_{M}}\|e_{m+1}\|^2\right]  \\
&\le C\tau\left\|\exp\left(2\tau\sum_{m\in\Z_{M}}\left\|u_\tau(t_m)\right\|_{L_{\infty}}\left\|u_m\right\|_{L_\infty}\right)\right\|_{\mathbb L^2(\Omega)}\\
&\quad \times \Bigg(1+\left(\E\left[\sup_{t\in [0,T]}\|u_\tau^D(t)\|_{\HH^2}^{8}\right] \right)^{\frac 12}
+\left(\E\left[\sup_{m\in \Z_M}\|u_{m+1}^D\|_{\HH^2}^{12}\right]\right)^{\frac 12} \\
&\qquad\quad +\left(\E\left[\sup_{m\in \Z_{M}}\|u_{m}\|_{\HH^2}^{12}\right] \right)^{\frac 12}\Bigg). 
\end{align*}
We conclude \eqref{u-um} for $p=2$ by combining Lemmas \ref{h2-um} and \ref{uutexp} and Corollaries \ref{h2-ud} and \ref{h2-umd}.
\qed

\section{Full Discretizations}
\label{sec-ful}

In this section, we first discretize Eq. \eqref{nls} in space by spectral Galerkin method and then apply the splitting Crank--Nicolson scheme 
in Section \ref{sec-cn} to the spatially discrete equation. 
Our main goal is to derive the strong error estimate of this
fully  discrete scheme.

\subsection{Spatial Spectral Galerkin Approximations}

We use the spectral Galerkin method to spatially discretize Eq. \eqref{nls} in this part and analyze its strong convergence rate.

Let $V_N$ be the subspace of $\mathbb L^2$ consists of the first $N$ eigenvectors of Dirichlet Laplacian operator.  
Denote by $\PPP^N: \mathbb L^ 2 \rightarrow V_N$ the spectral Galerkin projection defined by
$\<\PPP^N u,v\>=\<u,v\>$ for any $u\in \mathbb L^2$ and $v\in V_N$. 
It is clear that $\PPP^N$ is a self-adjoint and idempotent operator, i.e., 
\begin{align}\label{pn}
(\PPP^N)^*=\PPP^N,\quad (\PPP^N)^2=\PPP^N.
\end{align}
In order to inherit the charge conservation law, we directly use the spectral Galerkin method to approximate  Eq. \eqref{nls}.
The  corresponding numerical solution $u^N=\{u^N(t):\ t\in [0,T]\}$, $N\in \N$ satisfies   
\begin{align}\label{spe}
du^N&=\bi \left(\Delta u^N+\lambda \PPP^N (|u^N|^2u^N) 
-\PPP^N(u^N\circ dW(t))\right) , 
\quad u^N(0)=\PPP^N \xi.
\end{align}
We remark that the above approximation  is different form  applying the spectral Galerkin to approximate its equivalent It\^o
type SNLS eqution
\begin{align}\label{spe-i}
d\widetilde u^N&=\left(\bi \Delta \widetilde u^N+\bi \lambda \PPP^N (|\widetilde u^N|^2\widetilde u^N) 
-\frac 12 \PPP^N(F_Q\widetilde u^N)\right)dt\\\nonumber
&\quad-\bi\PPP^N(\widetilde u^N dW(t)),
\qquad \widetilde u^N(0)=\PPP^N \xi.
\end{align}
The following lemma shows that,
contrary to the deterministic NLS equation, i.e., Eq. \eqref{nls} with $Q=0$,
Eq. \eqref{spe-i} does not possess the charge conservation law. 

\begin{lm}\label{spech}
The charge of $u^N$	is  conserved, i.e., $\|u^N(t)\|=\|u^N(0)\|$ a.s.
The charge of $\widetilde u^N$ is not conserved, and satisfies the following evolution:
\begin{align}\label{cha-un}
\|\widetilde u^N(t)\|^2
=\|\widetilde u^N(0)\|^2-\int_0^t\sum_{k\in \N}\|(I-\PPP^N)(\widetilde u^NQ^{\frac12}e_k)\|^2 dr.
\end{align}
In particular, the charge of $u^N$ decreases, i.e.,
\begin{align}\label{cha-dec}
\|\widetilde u^N(t)\|^2\le \|\widetilde u^N(0)\|^2,\quad \text{a.s.}
\end{align}
\end{lm}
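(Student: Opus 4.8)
The plan is to apply the It\^o formula to the squared $\mathbb L^2$-norm of each approximate solution and exploit the algebraic structure of the projection operator $\PPP^N$. For the Stratonovich-type approximation $u^N$ governed by \eqref{spe}, I expect the conservation to follow because the multiplicative noise term retains the ``phase-rotation'' structure that preserves the modulus pointwise. For the It\^o-type approximation $\widetilde u^N$ governed by \eqref{spe-i}, the projection breaks this structure and introduces a genuinely dissipative correction, which will manifest as the negative-definite term in \eqref{cha-un}.

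First I would treat $u^N$. Since $u^N(t)\in V_N$, testing Eq. \eqref{spe} against $u^N$ and taking the real part, I would use that $\<\bi \Delta u^N, u^N\>=0$ and $\<\bi\lambda \PPP^N(|u^N|^2u^N), u^N\>=\<\bi\lambda |u^N|^2u^N, u^N\>=0$ (the latter using $(\PPP^N)^*=\PPP^N$ from \eqref{pn} and $u^N\in V_N$, followed by $\Re\<\bi z, z\>=0$). The remaining Stratonovich noise term $-\<\bi\PPP^N(u^N\circ dW), u^N\>$ likewise vanishes in the real part by the same self-adjointness and the purely imaginary character of the integrand; the Stratonovich convention contributes no It\^o--Stratonovich correction to the charge precisely because $|u^N|$ is formally preserved. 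This yields $\|u^N(t)\|^2=\|u^N(0)\|^2$ a.s.

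Next I would treat $\widetilde u^N$, which is the main computation. Applying the It\^o formula to $\|\widetilde u^N\|^2$ along \eqref{spe-i}, the drift contributions from $\bi\Delta \widetilde u^N$ and $\bi\lambda\PPP^N(|\widetilde u^N|^2\widetilde u^N)$ again vanish in the real part as above, and the martingale part $-2\<\bi\PPP^N(\widetilde u^N\,dW),\widetilde u^N\>$ is purely imaginary and so drops out of the real-valued norm. What survives is the interplay between the Stratonovich-to-It\^o drift $-\tfrac12\PPP^N(F_Q\widetilde u^N)$ and the It\^o quadratic-variation correction $\sum_{k}\|\PPP^N(\widetilde u^N Q^{\frac12}e_k)\|^2$. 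Writing $F_Q=\sum_k (Q^{\frac12}e_k)^2$ and using that $\<\PPP^N(\widetilde u^N Q^{\frac12}e_k),\widetilde u^N\>=\|Q^{\frac12}e_k\,\widetilde u^N\|^2-\<(I-\PPP^N)(\widetilde u^N Q^{\frac12}e_k),\widetilde u^N Q^{\frac12}e_k\>$, the leading pieces cancel and one is left with $-\sum_k\|(I-\PPP^N)(\widetilde u^N Q^{\frac12}e_k)\|^2$, giving \eqref{cha-un}.

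\textbf{The main obstacle} will be the bookkeeping in this last cancellation: one must carefully match the drift term $-\tfrac12\<\PPP^N(F_Q\widetilde u^N),\widetilde u^N\>$ against the It\^o correction $\tfrac12\cdot 2\sum_k\|\PPP^N(\widetilde u^N Q^{\frac12}e_k)\|^2$ and repeatedly invoke the idempotency and self-adjointness \eqref{pn} to convert $\<\PPP^N a, a\>$ into $\|\PPP^N a\|^2$, so that the residual collapses to $\|(I-\PPP^N)(\widetilde u^N Q^{\frac12}e_k)\|^2$ via the Pythagorean identity $\|a\|^2=\|\PPP^N a\|^2+\|(I-\PPP^N)a\|^2$. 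Once \eqref{cha-un} is established, the decay estimate \eqref{cha-dec} is immediate since the integrand is nonnegative, so the charge is nonincreasing in $t$ almost surely.
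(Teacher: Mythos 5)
Your proposal is correct and takes essentially the same route as the paper: It\^o's formula applied to the squared $\mathbb L^2$-norm of $\widetilde u^N$, repeated use of the self-adjointness and idempotency of $\PPP^N$ together with $\widetilde u^N\in V_N$, and the Pythagorean cancellation between the drift term coming from $-\tfrac12\PPP^N(F_Q\widetilde u^N)$ and the It\^o quadratic-variation term, while charge conservation for $u^N$ follows from the Stratonovich chain rule exactly as in the paper. One small slip: in your displayed identity the second argument of the inner product should be $\widetilde u^N Q^{\frac12}e_k$ rather than $\widetilde u^N$ (otherwise the left-hand side does not equal the right-hand side), but your subsequent appeal to $\|a\|^2=\|\PPP^N a\|^2+\|(I-\PPP^N)a\|^2$ makes the intended, correct computation clear.
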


\text{Proof}
The conservation of $\|u^N\|$ is immediately obtained since the chain formula.
However, the spatial approximation applied to It\^o formula is totally different.		
Applying It\^o formula to the functional $\frac12 \|\widetilde u^N\|^2$ and using the fact \eqref{pn}, we have 
\begin{align*}
&\frac12\|\widetilde u^N(t)\|^2-\frac12\|\widetilde u^N(0)\|^2\\
&=\int_0^t\<\widetilde u^N,\bi \PPP^N(\Delta \widetilde u^N)\>dr
+\int_0^t\<\widetilde u^N,\bi \lambda \PPP^N(|\widetilde u^N|^2 \widetilde u^N)\>dr\\
&\quad-\int_0^t\<\widetilde u^N,\bi \PPP^N(\widetilde u^NdW(r))\>
-\frac12\int_0^t\<\widetilde u^N,\PPP^N(\widetilde u^NF_Q)\>	dr\\
&\quad +\frac12\int_0^t\sum_{k\in \N}\|\PPP^N(\widetilde u^NQ^{\frac12}e_k)\|^2dr\\
&=-\frac12\int_0^t\sum_{k\in \N}\|(I-\PPP^N)(\widetilde u^NQ^{\frac12}e_k)\|^2 dr
\le 0.
\end{align*}
This completes the proof of \eqref{cha-un} and \eqref{cha-dec}.
\\\qed

Based on  the charge evolution  of $u^N$ and $\widetilde u^N$, we have the following $\HH^2$-a priori estimates of $u^N$ and  $\widetilde u^N$, which implies the well-posedness of the spectral Galerkin method.

\begin{lm}\label{h2-un}
For any $p\ge 1$, there exists a constant $C=C(\xi,Q, T, p)$ such that 
\begin{align}\label{h2-un0}
\E\left[\sup_{t\in[0,T]} \|u^N(t)\|_{\HH^2}^p  \right]\le C,\quad \E\left[\sup_{t\in[0,T]} \|\widetilde u^N(t)\|_{\HH^2}^p  \right]\le C
\end{align}
\end{lm}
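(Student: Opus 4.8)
The plan is to prove \eqref{h2-un0} by the same two-step bootstrap used for $u_\tau$ in Proposition \ref{h2-sta}: first control the $\HH^1$-norm through the charge and the energy $H$, then control the $\HH^2$-norm through the Lyapunov functional $f$ of \eqref{h2-lya}. I treat $u^N$ first, and then indicate the (minor) changes for $\widetilde u^N$.

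For the $\HH^1$ estimate, Lemma \ref{spech} already supplies the charge bound $\|u^N(t)\|=\|\PPP^N\xi\|\le\|\xi\|$ a.s. I would then apply It\^o's formula to $H(u^N)$. The decisive point is that, since $\PPP^N$ is self-adjoint and idempotent by \eqref{pn} and $\Delta$ leaves $V_N$ invariant, every term of the form $\langle \Delta u^N,(I-\PPP^N)(\cdot)\rangle$ vanishes by orthogonality; hence the deterministic drift $\bi\Delta u^N+\bi\lambda\PPP^N(|u^N|^2u^N)$ conserves $H$ exactly and only the stochastic contributions survive, giving an energy evolution of the same form as \eqref{ene-spl} but with $\PPP^N$ inserted in the noise. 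Because $\|\PPP^N\|\le 1$, the drift inequality of Lemma \ref{exp-int} holds with the same $\alpha$ and $\overline U=-\beta_\lambda$ as in Lemma \ref{exp-ut}, and since $H(\PPP^N\xi)$ is bounded uniformly in $N$, one obtains the $\HH^1$-exponential integrability $\sup_{t}\E[\exp(\|\nabla u^N(t)\|^2/e^{\alpha t})]\le C$ uniformly in $N$. In particular $\sup_t\E[\|u^N(t)\|_{\HH^1}^p]\le C$ for every $p$, and the supremum in time follows from Burkholder--Davis--Gundy as in Corollary \ref{h1-sta}.

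For the $\HH^2$ estimate I would apply It\^o's formula to $f(u^N)$ exactly as in the proof of Proposition \ref{h2-sta}. After taking expectations, H\"older and Gagliardo--Nirenberg inequalities together with the $\HH^1$ bound reduce all terms---including the It\^o correction $\sum_{k\in\N}\|\Delta(\PPP^N(u^N Q^{\frac12}e_k))\|^2$ coming from the projected noise---to the differential inequality $\frac{d}{dt}\E[f(u^N)]\le C(1+\E[f(u^N)])$; Gronwall then yields $\sup_t\E[f(u^N)]\le C$. Writing $\|\Delta u^N\|^2=f(u^N)-\lambda\langle\Delta u^N,|u^N|^2u^N\rangle$ and absorbing the cubic term via Gagliardo--Nirenberg and the $\HH^1$ bound gives $\sup_t\E[\|u^N(t)\|_{\HH^2}^2]\le C$; the supremum in time and general $p\ge 1$ follow by applying It\^o to $f^{\frac p2}(u^N)$ and Burkholder--Davis--Gundy, as in Proposition \ref{h2-sta}. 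For $\widetilde u^N$ the identical scheme works: by Lemma \ref{spech} the charge only decreases, so $\|\widetilde u^N(t)\|\le\|\xi\|$ still holds, and the non-conservation contributes only extra lower-order terms (controlled by the charge and the $\HH^1$-norm) in the $H$- and $f$-evolutions, which are absorbed into the Gronwall arguments.

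The main obstacle is that $\PPP^N$ does not commute with the pointwise multiplications in the cubic nonlinearity and in the noise, so the cancellations that make the $H$- and $f$-estimates close must be extracted by repeatedly using \eqref{pn} and the $V_N$-invariance of $\Delta$ to move $\PPP^N$ off the factors carrying the highest derivatives. The delicate part is keeping all constants independent of $N$: concretely, one must verify that the It\^o correction $\sum_{k\in\N}\|\Delta(\PPP^N(u^N Q^{\frac12}e_k))\|^2$ and the mixed second-derivative terms in $D^2f$ are bounded by $\|Q^{\frac12}\|_{\LL_2^2}^2$ times $f(u^N)$ plus $\HH^1$-quantities, uniformly in $N$, which is precisely what allows a single Gronwall inequality to deliver the uniform $\HH^2$ bound.
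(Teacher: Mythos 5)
Your proposal is correct and follows essentially the same route as the paper, which proves this lemma by appealing to the argument for \eqref{sta-u} (i.e.\ \cite[Theorem 2.1]{CHL16b}): charge conservation, then the energy $H$ for the $\HH^1$ bound, then the Lyapunov functional $f$ of \eqref{h2-lya} with It\^o, Gronwall and Burkholder--Davis--Gundy for the $\HH^2$ bound, exactly as in Proposition \ref{h2-sta}. Your additional care in moving $\PPP^N$ off the highest-derivative factors via self-adjointness, idempotence and the commutation $\PPP^N\Delta=\Delta\PPP^N$ on $V_N$ is precisely the detail the paper leaves implicit, and it is handled correctly.
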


\text{Proof}
The proof of \eqref{h2-un0} is similar to that of \eqref{sta-u}.
We refer to \cite[Theorem 2.1]{CHL16b} for  details.
\qed

To deduce the strong convergence rate of the spectral Galerkin approximations \eqref{spe}, we also need the following exponential integrability of $u^N$ and $\widetilde u^N$ by applying Lemma \ref{exp-int} applied to $H(u^N)$ and  $\widetilde u^N$, whose proof is similar to that of \cite[Proposition 3.1]{CHL16b} or Lemma \eqref{exp-ut} and we omit the proof.

\begin{prop}\label{exp-un}
There exist some positive constants $\alpha$ and  $C$ depending on $\xi,Q$ and $T$ such that 
\begin{align}\label{exp-un0}
\sup_{t\in[0,T]}\E\left[\exp\left(\frac{\|\nabla u^N(t)\|^2}
{e^{\alpha t}} \right)\right]
&\le C,\quad \sup_{t\in[0,T]}\E\left[\exp\left(\frac{\|\nabla \widetilde u^N(t)\|^2}
{e^{\alpha t}} \right)\right]
&\le C.
\end{align}
\end{prop}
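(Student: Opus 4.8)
The plan is to apply the exponential integrability lemma, Lemma~\ref{exp-int}, with the energy functional $U=H$ separately to $u^N$ and to $\widetilde u^N$, reproducing the argument used for the splitting process in Lemma~\ref{exp-ut}. First I would put the equation into It\^o form \eqref{spe-i}, so that (for $\widetilde u^N$, and for $u^N$ after restoring the Stratonovich correction) the coefficients are
\begin{align*}
\mu(v)=\bi\Delta v+\bi\lambda\PPP^N(|v|^2v)-\tfrac12\PPP^N(F_Qv),
\qquad
\sigma(v)e_k=-\bi\PPP^N(vQ^\frac12e_k).
\end{align*}
The aim is then to verify the hypothesis of Lemma~\ref{exp-int}, i.e. to produce an $\alpha$ with
\begin{align*}
DH(v)\mu(v)+\tfrac12\mathrm{tr}\!\left[D^2H(v)\sigma(v)\sigma^*(v)\right]
+\frac{\|\sigma^*(v)DH(v)\|^2}{2e^{\alpha t}}
\le\alpha H(v)+\beta,
\end{align*}
where the constants $\alpha,\beta$ are allowed to depend on $\|Q^\frac12\|_{\LL_2^2}$ and $\|\xi\|$ but must be independent of $N$.

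The heart of the computation is that the Hamiltonian part of the drift is annihilated by $DH$. Writing $DH(v)=-\Delta v-\lambda|v|^2v$ and setting $w:=\Delta v+\lambda\PPP^N(|v|^2v)\in V_N$, the self-adjointness and idempotency \eqref{pn} of $\PPP^N$ together with $v\in V_N$ give $\<|v|^2v,\bi w\>=\<\PPP^N(|v|^2v),\bi w\>$, whence
\begin{align*}
DH(v)\bigl(\bi\Delta v+\bi\lambda\PPP^N(|v|^2v)\bigr)
=\<-\Delta v-\lambda\PPP^N(|v|^2v),\bi w\>
=-\<w,\bi w\>=0,
\end{align*}
exactly as in the unprojected case. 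The remaining terms, coming from the It\^o correction $-\tfrac12\PPP^N(F_Qv)$, the trace term and the square term, reduce after using \eqref{pn} once more to an expression of the same shape as in Lemma~\ref{exp-ut}, which Gagliardo--Nirenberg bounds by $\alpha H(v)+\beta$. The uniformity in $N$ comes entirely from the charge estimate: by Lemma~\ref{spech} one has $\|u^N(t)\|=\|\PPP^N\xi\|\le\|\xi\|$, while the charge evolution \eqref{cha-un}--\eqref{cha-dec} gives $\|\widetilde u^N(t)\|\le\|\PPP^N\xi\|\le\|\xi\|$; in both cases the charge is dominated by $\|\xi\|$ irrespective of $N$, so $\alpha$ and $\beta$ may be chosen free of $N$.

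With the hypothesis verified, Lemma~\ref{exp-int} applied with $\overline U=-\beta$ yields
\begin{align*}
\sup_{t\in[0,T]}\E\left[\exp\left(\frac{H(v(t))}{e^{\alpha t}}\right)\right]
\le C\,\E\bigl[e^{H(\PPP^N\xi)}\bigr]\le C,
\end{align*}
the last inequality being uniform in $N$ since $\PPP^N\xi\to\xi$ in $\HH^1$ keeps $H(\PPP^N\xi)$ bounded. Finally I would convert from $H$ to $\|\nabla v\|^2$ using the relation between the energy and the $\HH^1$-norm already invoked for \eqref{exp-u}: in the defocusing case $\lambda=-1$ this is immediate, and in the focusing case $\lambda=1$ the one-dimensional Gagliardo--Nirenberg bound $\|v\|_{L_4}^4\le C\|\nabla v\|\,\|v\|^3$ together with the uniform charge bound gives $\|\nabla v\|^2\le C(1+H(v))$, from which \eqref{exp-un0} follows for both $v=u^N$ and $v=\widetilde u^N$. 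I expect the main obstacle to be bookkeeping the projection $\PPP^N$ through the It\^o, trace and diffusion-square terms so as to guarantee that $\alpha$ and $\beta$ really are independent of $N$; the properties \eqref{pn} are exactly what make the required cancellations go through, but they must be checked rather than quoted from the unprojected computation.
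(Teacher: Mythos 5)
Your proposal is correct and follows essentially the same route as the paper, which in fact omits the proof entirely and simply invokes Lemma~\ref{exp-int} applied to $H(u^N)$ and $H(\widetilde u^N)$ "similar to Lemma~\ref{exp-ut}"; your write-up supplies exactly the details the paper leaves implicit, namely the cancellation $DH(v)(\bi w)=0$ via \eqref{pn} and the $N$-uniformity of $\alpha,\beta$ through the charge bounds. One imprecision worth fixing: the It\^o form of \eqref{spe} is \emph{not} obtained by taking the drift correction $-\frac12\PPP^N(F_Qv)$; converting the projected Stratonovich noise gives the correction $-\frac12\sum_{k}\PPP^N\bigl(Q^{\frac12}e_k\,\PPP^N(vQ^{\frac12}e_k)\bigr)$, and this difference from \eqref{spe-i} is precisely why $u^N$ conserves the charge while $\widetilde u^N$ does not (Lemma~\ref{spech}), so the two processes must be handled with their own (distinct) coefficients. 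This does not affect your argument's validity, since with the correct correction the same Gagliardo--Nirenberg and charge estimates still yield the bound $\alpha H(v)+\beta$ uniformly in $N$.
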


Now we are in position of estimating the strong convergence rate between the exact solution $u$ and the spectral Galerkin solution $u^N$, $\widetilde u^N$.
 Recall the following frequently used estimate for spectral Galerkin projection: 
 \begin{align}\label{spe-err}
 \|(I-\PPP^N) v\|\le  \lambda_{N+1}^{-1}\|v\|_{\HH^2},
 \quad v\in \HH^2,
 \end{align}
 where $\lambda_N$ is the $N$-th eigenvalue of Dirichlet negative Laplacian: $\lambda_N=(N\pi)^2$, $N\in \N_+$.

\begin{tm}\label{u-un}
For any $p\ge 1$, there exists a constant $C=C(\xi,Q, T, p)$ such that 
\begin{align}\label{u-un0}
\left(\E \left[\sup_{t\in [0,T]}\|u(t)-u^N(t)\|^p\right]\right)^\frac1p
&\le C N^{-2},\\
 \left(\E \left[\sup_{t\in [0,T]}\|u(t)-\widetilde u^N(t)\|^p\right]\right)^\frac1p
&\le C N^{-2}.
\end{align}
\end{tm}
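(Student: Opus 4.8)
The plan is to mirror the strong error analysis of Theorem~\ref{u-ut} and the continuous dependence estimate \eqref{con-dep}, replacing the temporal local errors by spectral projection errors controlled through \eqref{spe-err}. I treat $u^N$ in detail; the argument for $\widetilde u^N$ is identical up to the form of the It\^o correction drift and the use of the charge decay \eqref{cha-dec} in place of conservation. First I would split the error as $u-u^N=(I-\PPP^N)u+\rho^N$, where $\rho^N:=\PPP^N u-u^N\in V_N$ and $e^N:=u-u^N$. The first summand is handled immediately: by \eqref{spe-err} and the $\HH^2$-a priori estimate \eqref{sta-u}, $\|(I-\PPP^N)u(t)\|\le \lambda_{N+1}^{-1}\|u(t)\|_{\HH^2}\le CN^{-2}\|u(t)\|_{\HH^2}$, whose $\mathbb L^p(\Omega;\C([0,T];\mathbb L^2))$-norm is $\OOO(N^{-2})$. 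It thus remains to estimate $\rho^N$, for which the initial datum already contributes $\|\rho^N(0)\|=\|(I-\PPP^N)\xi\|\le CN^{-2}\|\xi\|_{\HH^2}$.

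Next I would derive the evolution of $\rho^N$. Applying $\PPP^N$ to the It\^o form of \eqref{nls} and subtracting the It\^o form of \eqref{spe}, and using that $\PPP^N$ commutes with $\Delta$, one obtains an equation whose diffusion is $-\bi\PPP^N(e^N\,dW)$ and whose drift combines $\bi\Delta\rho^N$, the nonlinear difference $\bi\lambda\PPP^N(|u|^2u-|u^N|^2u^N)$, and the difference of the correction drifts. Applying It\^o's formula to $\|\rho^N\|^2$, the key structural cancellations are: (i) $\<\rho^N,\bi\Delta\rho^N\>=0$ after integration by parts with the homogeneous Dirichlet boundary condition; and (ii) the $\|\rho^N\|^2$-proportional part of the stochastic terms vanishes because $-\bi|\rho^N|^2Q^{\frac12}e_k$ is purely imaginary, exactly the charge-conservation mechanism of Proposition~\ref{cha-ene-spl}. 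The nonlinear term I would treat with the cubic difference formula $|u|^2u-|u^N|^2u^N=(|u|^2+|u^N|^2)e^N+uu^N\overline{e^N}$ together with the Gagliardo--Nirenberg inequality; using $\<\rho^N,\PPP^N(\cdot)\>=\<\rho^N,\cdot\>$, $\|\rho^N\|\le\|e^N\|$, and the decomposition $e^N=\rho^N+(I-\PPP^N)u$, this yields a term bounded by $C(1+\|u\|_{\mathbb L^\infty}^2+\|u^N\|_{\mathbb L^\infty}^2)\|\rho^N\|^2$ plus remainders in which every factor $(I-\PPP^N)u$ is estimated by \eqref{spe-err} and the $\HH^2$-bounds of Lemma~\ref{h2-un}, producing terms of order $\OOO(N^{-4})$. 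The correction drifts are split analogously, using the idempotency and self-adjointness \eqref{pn} of $\PPP^N$, into a part bounded by $C\|\rho^N\|^2$ and a projection remainder of order $\OOO(N^{-4})$.

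Collecting these estimates gives a pathwise inequality
\[
\|\rho^N(t)\|^2\le \|\rho^N(0)\|^2+\int_0^t k(s)\,\|\rho^N(s)\|^2\,ds+N^{-4}\int_0^t g(s)\,ds+M_t,
\]
where $k(s)=C(1+\|u(s)\|_{\mathbb L^\infty}^2+\|u^N(s)\|_{\mathbb L^\infty}^2)$, $g$ is a polynomial in the $\HH^2$-norms of $u$ and $u^N$, and $M$ is a martingale whose bracket satisfies $\<M\>_t\le CN^{-4}\int_0^t\|\rho^N\|^2\|u\|_{\HH^2}^2\,ds$. I would close this by the integrating-factor form of Gronwall's inequality: setting $Z_t:=\exp(-\int_0^t k)\,\|\rho^N(t)\|^2$ cancels the $k\|\rho^N\|^2$ drift, so that $\sup_{t}\|\rho^N(t)\|^2\le \exp(\int_0^T k)\,(\|\rho^N(0)\|^2+N^{-4}\int_0^T g+\sup_t|\widetilde M_t|)$ with $\widetilde M_t:=\int_0^t\exp(-\int_0^s k)\,dM_s$. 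Taking $\mathbb L^{p/2}(\Omega)$-moments, H\"older's inequality separates the factor $\exp(q\int_0^T k)$ — which is finite by the $\HH^1$-exponential integrability \eqref{exp-u} of $u$ and \eqref{exp-un0} of $u^N$, after bounding $\|u\|_{\mathbb L^\infty}^2\le C(1+\|\nabla u\|)$ via Gagliardo--Nirenberg and the charge bound, exactly as in Lemma~\ref{u-ut-exp} — from the remaining factor, whose moments are controlled by \eqref{sta-u} and Lemma~\ref{h2-un}. The supremum of $\widetilde M$ is handled by the Burkholder--Davis--Gundy inequality, the resulting $N^{-4}\sup_t\|\rho^N\|^2$ contribution being absorbed into the left-hand side by Young's inequality. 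This yields the bound $\OOO(N^{-2})$ and hence \eqref{u-un0}.

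The main obstacle is precisely this last step: closing the Gronwall estimate when the coefficient $k$ is not bounded but only exponentially integrable, a consequence of the non-monotone nonlinearity, while simultaneously retaining the supremum in time and controlling the stochastic integral. The resolution is the integrating-factor trick together with the combination of exponential integrability and H\"older's inequality established in Lemma~\ref{u-ut-exp}, which trades the unbounded exponential factor against the finite polynomial $\HH^2$-moments. A secondary point is the $\widetilde u^N$ case, where the charge only decays (Lemma~\ref{spech}); there one invokes \eqref{cha-dec} to retain $\|\widetilde u^N\|\le\|\xi\|$ and hence the same Gagliardo--Nirenberg control of $\|\widetilde u^N\|_{\mathbb L^\infty}$, together with the exponential integrability \eqref{exp-un0} for $\widetilde u^N$, the additional It\^o correction drift being absorbed into the $C\|\rho^N\|^2$ and $\OOO(N^{-4})$ terms in the same manner.
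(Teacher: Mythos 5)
Your decomposition $u-u^N=(I-\PPP^N)u+\rho^N$ with $\rho^N=\PPP^N u-u^N$, and your treatment of the drift terms (Laplacian cancellation, cubic difference formula, splitting of the It\^o/Stratonovich corrections via \eqref{pn} and \eqref{spe-err}) are sound, and they parallel the paper's argument, which instead estimates the full error $\epsilon^N=u-\widetilde u^N$ directly with the same It\^o-formula/Gronwall skeleton and the same ingredients \eqref{sta-u}, \eqref{h2-un0}, \eqref{exp-u}, \eqref{exp-un0}. The genuine gap is in your closing step. In your splitting the surviving martingale is the cross term
\begin{align*}
M_t=-2\int_0^t\left\<\rho^N,\bi\,(I-\PPP^N)u\,dW(r)\right\>,
\qquad
\<M\>_t\le CN^{-4}\int_0^t\|\rho^N\|^2\|u\|^2_{\HH^2}\,dr,
\end{align*}
so the unknown $\|\rho^N\|^2$ sits inside the bracket. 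The order of operations you describe --- H\"older to strip off $\exp(\int_0^Tk)$, then Burkholder--Davis--Gundy on $\sup_t|\widetilde M_t|$, then Young's inequality to absorb the resulting term into the left-hand side --- cannot close. After H\"older the martingale must be bounded at the conjugate moment $q'p/2$ with $q'>1$, and BDG plus Young then produce a multiple of a strictly higher moment of $\sup_t\|\rho^N\|$ than the moment $p$ appearing on the left, which cannot be absorbed; if instead you try to absorb before applying H\"older, the martingale is multiplied by $\exp(\int_0^Tk)$, and Cauchy--Schwarz reintroduces the same mismatch. Iterating the resulting circular bound yields at best $\OOO(N^{-2+\epsilon})$, not the optimal rate.

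Two repairs exist. Within your framework, you must close the estimate for the weighted quantity $Z_t=\exp(-\int_0^tk)\|\rho^N(t)\|^2$ \emph{before} reintroducing the exponential: since $k\ge0$ one has $e^{-2\int_0^sk}\|\rho^N(s)\|^2\le Z_s$, hence $\<\widetilde M\>_T\le CN^{-4}\big(\sup_sZ_s\big)\int_0^T\|u\|^2_{\HH^2}dr$, and then BDG, Cauchy--Schwarz and Young give, for every $q\ge1$,
\begin{align*}
\E\left[\sup_t|\widetilde M_t|^{q}\right]
\le \frac12\,\E\left[\big(\sup_tZ_t\big)^{q}\right]
+C_qN^{-4q}\,\E\left[\Big(\int_0^T\|u\|^2_{\HH^2}dr\Big)^{q}\right],
\end{align*}
so the pathwise bound $\sup_tZ_t\le Z_0+N^{-4}\int_0^Tg\,dr+\sup_t|\widetilde M_t|$ closes to $\E[(\sup_tZ_t)^q]\le C_qN^{-4q}$ (absorption is legitimate because $\E[(\sup_tZ_t)^q]<\infty$ a priori by \eqref{sta-u} and \eqref{h2-un0}); only afterwards does one write $\sup_t\|\rho^N\|^2\le\exp(\int_0^Tk)\sup_tZ_t$ and apply H\"older with the exponential integrability. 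The paper sidesteps the problem entirely: working with $\epsilon^N$ itself, the identity $\<v,\bi v\phi\>=0$ for real $\phi$ and the orthogonality of the ranges of $\PPP^N$ and $I-\PPP^N$ reduce the stochastic integrand to $\<(I-\PPP^N)u,\bi(I-\PPP^N)(\widetilde u^NdW)\>$, a product of two projection errors containing no unknown, so BDG alone gives the $\OOO(N^{-4})$ bound (the term $R^N$ in the paper) and no absorption is needed. Adopting this rewriting would make the martingale term harmless in your decomposition as well.
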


\textbf{Proof}	
For simplicity, we only prove the case $p=2$ for $\widetilde u^N(t)$, the proof for the other case and $u^N$ is similar.
Denote by $\epsilon^N:=u-\widetilde u^N$.
Subtracting Eq. \eqref{spe} from Eq. \eqref{nls} yields that
\begin{align*}	
d \epsilon^N
&=\bi \Delta \epsilon^N dt
+\bi \lambda (|u|^{2}u-\PPP^N(|\widetilde u^N|^{2}\widetilde u^N))dt\\
&\quad -\frac12 \left[ uF_Q-\PPP^N(\widetilde u^NF_Q)\right] dt
-\bi\left[udW(t)-\PPP^N(\widetilde u^NdW(t))\right].
\end{align*}
Applying It\^o formula to the functional $\frac12\|\epsilon^N\|^2$, we get 
\begin{align*}
&\frac12\|\epsilon^N(t)\|^2-\frac12\|(I-\PPP^N) \xi\|^2\\
&=\int_0^t \<\epsilon^N,\bi(\Delta u^N-\PPP^N \Delta \widetilde u^N)\> dr
+\int_0^t \<\epsilon^N,\bi \lambda (|u|^{2}u-\PPP^N(|\widetilde u^N|^{2}\widetilde u^N))\> dr
\\
&\quad -\frac12\int_0^t \<\epsilon^N,(uF_Q-\PPP^N(\widetilde u^NF_Q))\> dr
-\int_0^t \<\epsilon^N,\bi(udW-\PPP^N(\widetilde u^NdW(r)))\> \\
&\quad+\frac12\int_0^t\sum_{k\in \N} \left(\|(I-\PPP^N)(u^NQ^{\frac12}e_k)\|^2+\|\PPP^N(\epsilon^NQ^{\frac12}e_k)\|^2\right) dr \\
&:=I^N_1+I^N_2+I^N_3+I^N_4+I^N_5.
\end{align*}
Due to the property of  the spectral Galerkin projection operator $\PPP^N$, $IV_1=0$.
By the identity $|a|^{2}a-|b|^{2}b=(|a|^{2}+|b|^2)(a-b)+ab(\overline {a-b})$ for $a,b \in \mathbb C$ and the estimation \eqref{spe-err}, we have 
\begin{align*}
I^N_2
&=\int_0^t\<\epsilon^N,\bi \lambda(|u|^{2}u-|\widetilde u^N|^{2}\widetilde u^N)\> 
+\<\epsilon^N,\bi \lambda(I-\PPP^N)(|\widetilde u^N|^{2}\widetilde u^N)\> dr\\
&\le \int_0^t \|\epsilon^N\|^2 \|u\|_{L_\infty} \|\widetilde u^N\|_{L_\infty} dr
+\int_0^t\|\epsilon^N\|\|(I-\PPP^N)(|\widetilde u^N|^{2}\widetilde u^N)\| dr  \\
&\le \frac12\lambda_{N+1}^{-2} \int_0^t \|\widetilde u^N\|_{\HH^2}^{6} dr
+\int_0^t  \left(\frac12+\|u\|_{L_\infty} \|\widetilde u^N\|_{L_\infty}\right) 
\|\epsilon^N\|^2 dr.
\end{align*}
It follows from \eqref{spe-err} and Cauchy-Schwarz inequality that
\begin{align*}
I^N_3+I^N_5
&=-\frac12 \int_0^t\sum_k\|(I-\PPP^N)(\epsilon^NQ^{\frac12}e_k)\|^2 dr\\
&\quad -\frac12 \int_0^t \<\epsilon^N,(I-\PPP^N)(\widetilde u^NF_Q)\>dr\\
&\quad+\frac12\int_0^t \sum_k\|(I-\PPP^N)(uQ^{\frac12}e_k)\|^2dr\\
&\le \frac {\|Q^{\frac12}\|_{\LL_2^2}^2}2\int_0^t \left( \|\epsilon^N\|^2+\lambda_{N+1}^{-2}\|u\|_{\HH^{2}}^2
+\lambda_{N+1}^{-2}\|\widetilde u^N\|_{\HH^{2}}^2\right)dr.
\end{align*}
By the properties of $\PPP^N$, we can rewrite the third term $I^N_3$ as
\begin{align*}
I^N_4=-\int_0^t \<(I-\PPP^N)u,\bi(I-\PPP^N)(\widetilde u^NdW(r))\>.
\end{align*}
Combining the above  estimations, we obtain
\begin{align*}
\|\epsilon^N(t)\|^2
&\le \lambda_{N+1}^{-2} \left(\|\xi\|^2
+\int_0^t \left[ \|Q^\frac12\|_{\LL_2^2}^2 (\|\widetilde u^N\|_{\HH^2}^2+\|u\|_{\HH^2}^2 )
+\|\widetilde u^N\|_{\HH^2}^{6}\right] dr\right)  \\
&\quad +2\left|\int_0^t \<(I-\PPP^N)u,\bi(I-\PPP^N)(\widetilde u^NdW(r))\>\right| \\
&\quad +\int_0^t  \left(1+\|Q^\frac12\|_{\LL_2^2}^2 +2\|u\|_{L_\infty}\|\widetilde u^N\|_{L_\infty}\right) 
\|\epsilon^N\|^2 dr.
\end{align*}
Gronwall inequality implies that 
\begin{align*}
&\|\epsilon^N(t)\|^2
\le \exp\left(\int_0^T 1+\|Q^\frac12\|_{\LL_2^2}^2 +2\|u\|_{L_\infty} \|\widetilde u^N\|_{L_\infty} dr\right)\\
&\quad \times \bigg(\lambda_{N+1}^{-2} \left(\|\xi\|^2
+\int_0^T \left[ \|Q^\frac12\|_{\LL_2^2}^2 (\|\widetilde u^N\|_{\HH^2}^2+\|u\|_{\HH^2}^2) 
+\|u^N\|_{\HH^2}^{6}\right] dr\right)  \\
&\qquad +2\sup_{t\in[0,T]} \left|\int_0^t \<(I-\PPP^N)u,\bi(I-\PPP^N)(\widetilde u^NdW(r))\>\right|\bigg).
\end{align*}
Now taking the supreme over $t$, taking expectation on both sides in the above inequality and using the a priori estimation \eqref{h2-un0} in Lemma \ref{h2-un} as well as the exponential integrability \eqref{exp-un0} in Lemma \ref{exp-un}, we obtain
\begin{align*}
\E\left[\sup_{t\in[0,T]} \|\epsilon^N(t)\|^2\right]  
\le C\left(\lambda_{N+1}^{-2}+R^N\right),
\end{align*}	
where 
\begin{align*}
R^N:=\left\|
\int_0^t \<(I-\PPP^N) u,\bi(I-\PPP^N)(\widetilde u^NdW(r))\>\right\|_{\mathbb L^2(\Omega; \mathbb L^{\infty}([0,T]))},
\end{align*}	
It suffices to estimate the stochastic integral term appeared above.
This can be done by Burkholder--Davis--Gundy inequality and the estimation \eqref{spe-err}:
\begin{align*}
R^N 
&\le C\left(\sum_{k=1}^{\infty}\int_0^t
\E \left[\|(I-\PPP^N)u\|^2 \|(I-\PPP^N)\widetilde u^NQ^{\frac12}e_k\|^2 \right] dr \right)^\frac12 \\
&\le C\|Q^{\frac12}\|_{\LL_2^2} \lambda_{N+1}^{-2}\left(\int_0^t \E \left[\|u\|_{\HH^2}^2\|\widetilde u^N\|_{\HH^2}^2 \right]ds\right)^\frac12
\le C\lambda_{N+1}^{-2}.
\end{align*}
This completes the proof of \eqref{u-un0}.
\\\qed

\begin{rk}\label{rk-fem}
For general finite element methods, the projection operator $\PPP^N$ does not commute with $\Delta$ in the term $\<\epsilon^N,\bi(\Delta u^N-\PPP^N \Delta u^N)\>$. As a result,  this term produce $\| (I-\PPP^N) \Delta u^N\|$ which requires more regularity.
Compared to finite element methods, the spectral Galerkin method can achieve the optimal order. 
\end{rk}

\subsection{Spectral splitting Crank--Nicolson Scheme}

Motivated by  Theorem \ref{u-um} and Theorem \ref{u-un}, we propose a fully discrete splitting Crank--Nicolson scheme, in this part, to inherit the charge conservation law,
stability and exponential integrability.
As in Section \ref{sec-spl}, we split the spatially semi-discrete spectral approximations \eqref{spe} into
\begin{align}\label{spe-spl}
\begin{cases}
du^{(N,D)}=\left(\bi \Delta u^{(N,D)}
+\bi \lambda \PPP^N \left(|u^{(N,D)}|^{2} u^{(N,D)}\right) \right)dt, \\
du^{(N,S)}=-\bi \PPP^N (u^{(N,S)} \circ dW(t)).
\end{cases}
\end{align}
Then we apply the temporal splitting Crank--Nicolson scheme \eqref{spl-cn} to the above spectral splitting systems \eqref{spe-spl}, which leads to the following fully discrete spectral splitting Crank--Nicolson scheme:
\begin{align}\label{spe-cn}
\begin{cases}
u^{(N,D)}_{m+1}=u^N_m+ \bi \tau \Delta  u^{(N,D)}_{m+\frac12}
+\frac{\bi \lambda \tau}2
\PPP^N \left( \left(| u^{N}_m|^2+|u^{(N,D)}_{m+1}|^2\right) 
u^{(N,D)}_{m+\frac12}\right), \\
u_{m+1}^N= \exp\left(-\bi\PPP^N(W(t_{m+1})-W(t_m))\right) \cdot u^{(N,D)}_{m+1}.
\end{cases}
\end{align}
where $ u^{(N,D)}_{m+\frac12}=\frac12 ( u^{N}_m + u^{(N,D)}_{m+1})$.

It is not difficult to see that the charge of 
Eq. \eqref{spe-spl} is conserved since both spatial and temporal numerical methods preserves the charge. 
Similarly to the proof of Lemma \eqref{exp-ut} and Lemma \ref{h2-um}, we have the $\HH^1$-exponential integrability and $\HH^2$-a priori estimate of  $u_m^N$, $m\in \Z_M$.

\begin{lm}\label{umn}
Let $p\ge 1$.
There exist constants $C$ and $\alpha$ depending on $\xi$, $Q$ and $T$ and 
$C'=C'(\xi,Q,T,p)$ such that 
\begin{align*}
\sup_{m\in \Z_{M}}\E \left[\exp\left(\frac {\|\nabla u_m^N\|^2}{e^{\alpha t_m}}\right)\right]
\le C
\end{align*}
and
\begin{align*}
\E\left[\sup_{m\in \Z_{M}}\| u_m^N\|_{\HH^2}^p \right]\le C'.
\end{align*}
\end{lm}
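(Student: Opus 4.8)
The plan is to reproduce the two-stage analysis used for the temporal scheme, treating the deterministic Crank--Nicolson step and the stochastic rotation step in \eqref{spe-cn} separately and chaining the resulting one-step estimates over $m\in\Z_M$ by the discrete Gronwall inequality (Lemma \ref{dis-gro}). The structural facts I use throughout are those recorded in \eqref{pn}: $\PPP^N$ is self-adjoint and idempotent, it commutes with $\Delta$ (since $V_N$ is spanned by eigenvectors of the Dirichlet Laplacian), and it acts as the identity on $V_N$. Because every iterate $u_m^N$ and $u^{(N,D)}_{m+1}$ lies in $V_N$, the projection is transparent whenever it is paired, through self-adjointness, with a factor taking values in $V_N$. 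In particular the deterministic step inherits from \eqref{spl-cn} both the discrete charge conservation $\|u_m^N\|=\|\PPP^N\xi\|\le\|\xi\|$ and the energy conservation $H(u^{(N,D)}_{m+1})=H(u_m^N)$: testing the Crank--Nicolson relation against the $V_N$-valued functions $u^{(N,D)}_{m+\frac12}$ and $u^{(N,D)}_{m+1}-u_m^N$ removes the projection and returns exactly the unprojected identities. Since $\PPP^N(W(t_{m+1})-W(t_m))$ is real-valued, the rotation step preserves the modulus pointwise and hence the charge, so $\|u_m^N\|=\|\xi\|$ for all $m$.

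For the exponential integrability I mirror Lemma \ref{exp-ut}. I extend $u_m^N$ to a continuous process $u_m^{(N,S)}$ on $T_m$ solving the second line of \eqref{spe-spl} with datum $u^{(N,D)}_{m+1}$, and apply Lemma \ref{exp-int} to $U=H$ along this flow, with $\sigma(u)=-\bi\PPP^N(uQ^{\frac12})$ and $\mu(u)=-\frac12\PPP^N(uF_Q)$. Using self-adjointness to move $\PPP^N$ off the noise coefficients and onto $DH(u)=-\Delta u-\lambda|u|^2u$, together with $\PPP^N\Delta u=\Delta u$ for $u\in V_N$ and $\|\PPP^N v\|\le\|v\|$, the quantity $DH(u)\mu(u)+\frac12\mathrm{tr}[\sigma\sigma^*D^2H(u)]+\frac{\|\sigma^*(u)DH(u)\|^2}{2e^{\alpha(t-t_m)}}$ is bounded, as in Lemma \ref{exp-ut}, by $\alpha_\lambda H(u)+\beta_\lambda$, with the same $\alpha_\lambda,\beta_\lambda$ (the charge $\|\xi\|$ entering them is unchanged). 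Iterating the one-step estimate together with the energy conservation of the deterministic step, exactly as in \eqref{ite-hp}, gives $\sup_m\E[\exp(H(u_m^N)e^{-\alpha t_m})]\le C$, and the relation between $H$ and $\|\nabla\cdot\|^2$ under charge conservation yields the claimed exponential integrability of $\|\nabla u_m^N\|^2$. The bound $\sup_m\E[\|u_m^N\|_{\HH^1}^p]\le C$ follows, and the estimate with the supremum inside the expectation is obtained as in Corollary \ref{h1-um}.

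For the $\HH^2$-a priori estimate I mirror Lemma \ref{h2-um} via the Lyapunov functional $f$ in \eqref{h2-lya}. Taking the complex inner product of the Crank--Nicolson relation in \eqref{spe-cn} with $\Delta(u^{(N,D)}_{m+1}-u_m^N)$ and extracting the imaginary part, every occurrence of $\PPP^N$ is removed by self-adjointness and the commutation $\PPP^N\Delta=\Delta\PPP^N$, because each projected term is paired with a $\Delta$-image of a $V_N$-valued function. The algebraic identities and the splitting into the four pieces $II_1^m$--$II_4^m$ of Lemma \ref{h2-um} then go through verbatim, with the consistency bounds of Lemma \ref{discon} (whose proof is likewise insensitive to $\PPP^N$), giving $f(u^{(N,D)}_{m+1})\le(1+2\tau)f(u_m^N)+C\tau(1+\|\nabla u^{(N,D)}_{m+1}\|^{12}+\|\nabla u_m^N\|^{12})$. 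For the rotation step I apply It\^o's formula to $f(u_m^{(N,S)})$, take expectation, and use Gagliardo--Nirenberg together with the $\HH^1$ bound just established to reach $\E[f(u_{m+1}^N)]\le e^{C\tau}((1+2\tau)\E[f(u_m^N)]+C\tau)$, as in the derivation of \eqref{uts}. Lemma \ref{dis-gro} then produces $\sup_m\E[f(u_m^N)]\le Ce^{CT}(1+f(\PPP^N\xi))$, whence $\sup_m\E[\|\Delta u_m^N\|^2]\le C$ after absorbing the nonlinearity by interpolation; the general $p\ge1$ estimate \eqref{h2-um0} then follows by applying It\^o to $f^{p/2}$ and Burkholder--Davis--Gundy to the stochastic integral, as in Proposition \ref{h2-sta}.

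The main obstacle is confirming that inserting $\PPP^N$ genuinely leaves all constants unchanged and $N$-independent. The delicate point is the cubic term $|u|^2u$, which does not lie in $V_N$, so $\PPP^N$ acts nontrivially there; one must check that in each inner product the projection can be transferred onto the adjacent $V_N$-valued factor, or absorbed using $\|\PPP^N v\|\le\|v\|$, without generating an uncontrolled remainder. Granting this, the computations of Lemmas \ref{exp-ut} and \ref{h2-um} apply with identical constants, which is exactly the advantage of the spectral discretization noted in Remark \ref{rk-fem}.
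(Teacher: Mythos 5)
Your overall strategy---charge/energy bookkeeping for the deterministic Crank--Nicolson substep, Lemma \ref{exp-int} applied to a continuous interpolation of the stochastic substep, then the Lyapunov functional $f$ combined with Lemma \ref{dis-gro}---is exactly the route the paper intends: its own proof is the single sentence that the arguments of Lemmas \ref{exp-ut} and \ref{h2-um} carry over, and your bookkeeping for the Crank--Nicolson step (transparency of $\PPP^N$ via self-adjointness against $V_N$-valued test functions) is the right way to make that sentence precise.

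However, your treatment of the stochastic substep has a genuine gap, concentrated in two explicit claims. First, the claim that every iterate lies in $V_N$ is false for the scheme as written: the rotation $u^N_{m+1}=\exp\left(-\bi\PPP^N(W(t_{m+1})-W(t_m))\right)u^{(N,D)}_{m+1}$ is a pointwise multiplication by a modulus-one function, and the product of such a function with an element of $V_N$ is in general not in $V_N$. Since every one of your cancellations---discrete charge conservation, $H(u^{(N,D)}_{m+1})=H(u^N_m)$, and the removal of $\PPP^N$ in the $f$-identities---is obtained by pairing the projected cubic term against a factor assumed to lie in $V_N$, the argument breaks from the second step onward unless one either re-projects after the rotation (in which case exact conservation is lost and projection-error terms of the type controlled in Lemma \ref{spech} must be handled) or works with iterates outside $V_N$ (in which case $\left\<\bi\PPP^N\left(\rho\, u^{(N,D)}_{m+\frac12}\right),u^{(N,D)}_{m+\frac12}\right\>$ no longer vanishes). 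Second, the interpolating process you feed into Lemma \ref{exp-int} solves the second line of \eqref{spe-spl}, with $\sigma(u)=-\bi\PPP^N(uQ^{\frac12})$ and $\mu(u)=-\frac12\PPP^N(uF_Q)$; but the exponential in \eqref{spe-cn} is the flow of $du=-\bi u\circ d(\PPP^N W)$, which is a different equation, since $\PPP^N(u\,dW)\neq u\,d(\PPP^N W)$. (Your drift is moreover not the Stratonovich-to-It\^o correction of the projected equation, which is $-\frac12\sum_{k}\PPP^N\big(\PPP^N(uQ^{\frac12}e_k)\,Q^{\frac12}e_k\big)$.) So the exponential-integrability computation is performed for a process that does not interpolate the scheme. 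The repair is straightforward and worth stating: take $\sigma_k(u)=-\bi u\,(\PPP^NQ^{\frac12}e_k)$ and $\mu(u)=-\frac12 u\sum_k(\PPP^NQ^{\frac12}e_k)^2$, for which the exponential in \eqref{spe-cn} is the exact flow and the pointwise modulus, hence the charge, is preserved; then the computation of Lemma \ref{exp-ut} goes through with $Q^{\frac12}e_k$ replaced by $\PPP^NQ^{\frac12}e_k$ and constants controlled by $\|Q^{\frac12}\|_{\LL_2^2}$, because the spectral projection commutes with $\Delta$ and does not increase the relevant Sobolev norms. To be fair, the paper's one-line proof glosses over both points as well; but your write-up asserts them explicitly, so they must be fixed rather than ``granted''.
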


As a consequence of the exponential integrability, we have the following Gaussian tail estimations for $u(t)$, $u_\tau(t)$, $u_m$, $u^N(t)$ and $u_m^N$, $t\in [0,T]$, $m\in \Z_{M}$ with $M\in \N$ and $N\in \N_+$, which have their own interest.

\begin{cor}\label{tail}
There exist  constants $C$ and $\eta$ depending on $\xi,Q$ and $T$ such that for any $x\in \R_+$,
\begin{align*}
&\sup_{t\in [0,T]}\PP\left(\|u(t)\|_{\HH^1}\ge x\right)
+ \sup_{t\in [0,T]}\PP\left(\|u_\tau(t)\|_{\HH^1}\ge x\right)
+\sup_{m\in \Z_{M}}\PP\left(\|u_m\|_{\HH^1}\ge x\right)\\
&\quad+\sup_{t\in [0,T]}\PP\left(\|u^N(t)\|_{\HH^1}\ge x\right)+
\sup_{m\in \Z_{M}}\PP\left(\|u^N_m\|_{\HH^1}\ge x\right)\le  C\exp(-\eta x^2).
\end{align*}
\end{cor}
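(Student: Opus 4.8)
The plan is to obtain each of the five tail bounds from the corresponding $\HH^1$-exponential integrability estimate via an exponential Chebyshev inequality, after first removing the $\mathbb L^2$-part of the $\HH^1$-norm by means of the charge (conservation/decay) laws. Throughout, let $v$ denote a generic member of the family $\{u(t),\,u_\tau(t),\,u_m,\,u^N(t),\,u_m^N\}$.

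First I would reduce the full $\HH^1$-norm to the gradient. Since $\|v\|_{\HH^1}^2=\|v\|^2+\|\nabla v\|^2$ for $v\in\HH_0^1$, and since each process has charge at most $\|\xi\|^2$ — by \eqref{cha-spl} for $u_\tau$, by \eqref{cn-cha} for $u_m$, by Lemma \ref{spech} for $u^N$ (and, since the fully discrete scheme \eqref{spe-cn} likewise preserves the charge, for $u_m^N$), and by the charge conservation $\|u(t)\|=\|\xi\|$ of the exact solution of \eqref{nls} — one has the event inclusion
\[
\{\|v\|_{\HH^1}\ge x\}\subseteq\{\|\nabla v\|^2\ge x^2-\|\xi\|^2\},\qquad x\ge 0,
\]
so it suffices to produce a Gaussian tail for $\|\nabla v\|^2$.

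Next I would unify the exponential-integrability constants. Each of \eqref{exp-u}, \eqref{exp-ut0}, \eqref{exp-um1}, \eqref{exp-un0} and Lemma \ref{umn} provides a pair $(C_i,\alpha_i)$ with $\sup\E[\exp(\|\nabla v\|^2/e^{\alpha_i t})]\le C_i$ (with $t_m$ in place of $t$ in the discrete cases). Setting $\alpha:=\max_i\alpha_i$ and $\eta:=e^{-\alpha T}$, the monotonicity $e^{\alpha_i t}\le e^{\alpha T}$ on $[0,T]$ gives $\exp(\eta\|\nabla v\|^2)\le\exp(\|\nabla v\|^2/e^{\alpha_i t})$, whence $\sup\E[\exp(\eta\|\nabla v\|^2)]\le\max_i C_i=:C_0$ simultaneously for all five families. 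The exponential Chebyshev inequality then yields, for $x^2\ge\|\xi\|^2$,
\[
\PP(\|v\|_{\HH^1}\ge x)\le\PP\big(\eta\|\nabla v\|^2\ge\eta(x^2-\|\xi\|^2)\big)\le e^{-\eta(x^2-\|\xi\|^2)}\,\E\big[\exp(\eta\|\nabla v\|^2)\big]\le C_0\,e^{\eta\|\xi\|^2}e^{-\eta x^2}.
\]
For $x^2<\|\xi\|^2$ the left-hand side is at most $1\le e^{\eta\|\xi\|^2}e^{-\eta x^2}$, so enlarging the constant to $C:=C_0\,e^{\eta\|\xi\|^2}$ gives $\PP(\|v\|_{\HH^1}\ge x)\le C e^{-\eta x^2}$ for every $x\in\R_+$ and every $v$ in the list; summing the five resulting bounds and absorbing the factor $5$ into $C$ yields the claim, the suprema over $t\in[0,T]$ and $m\in\Z_M$ being harmless since $C_0$ and $\alpha$ are already uniform in $t$ and $m$.

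The argument is essentially routine, being a direct quantitative reformulation of the exponential integrability already proved. The only point requiring genuine care is the \emph{uniform} choice of the single pair $(C,\eta)$ across the five different processes: this forces one to take $\alpha$ as the maximum of the individual exponents $\alpha_i$ and to exploit the monotonicity of $t\mapsto e^{\alpha t}$ on $[0,T]$, so that the one value $\eta=e^{-\alpha T}$ simultaneously controls all of them together with the $\mathbb L^2$-contribution handled by the charge laws.
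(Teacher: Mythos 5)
Your proof is correct and follows essentially the same route as the paper: an exponential Chebyshev inequality applied to the $\HH^1$-exponential integrability estimates, with a single pair $(C,\eta=e^{-\alpha T})$ absorbing the suprema. You are in fact somewhat more careful than the paper's proof, which treats only $u$ and writes the exponential moment of the full $\HH^1$-norm directly, leaving implicit both the reduction of the $\mathbb L^2$-part via the charge laws and the uniformization of the constants $(C_i,\alpha_i)$ across the five processes that you spell out.
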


\textbf{Proof}
We only prove the Gaussian tail estimation of $u$; the other cases are similar.
By Chebyshev inequality and the exponential integrability of $u$ in Theorem \ref{well}, we deduce that for any $y\ge 1$ there holds that 
\begin{align*}
\PP\left(\|u(t)\|_{\HH^1}\ge \sqrt{\ln(y)e^{\alpha T}}\right)
&=\PP\left(\exp\left(e^{-\alpha T}\|u(t)\|_{\HH^1}^2\right) \ge y\right)\\
&\le \frac{\E\left[\exp\left(e^{-\alpha T}\|u(t)\|_{\HH^1}^2\right)\right]}y.
\end{align*}
Let $x=\sqrt{\ln(y)e^{\alpha T}}$.
Then we obtain
\begin{align*}
\PP\left(\|u(t)\|_{\HH^1}\ge x\right)
&\le \E\left[\exp\left(e^{-\alpha T}\|u(t)\|_{\HH^1}^2\right)\right]  \exp\left(-e^{-\alpha T}x^2\right) \\
&\le  C\exp(-\eta x^2)
\end{align*}
for $C$ as in \eqref{exp-u} and $\eta=e^{-\alpha T}$.
\\\qed

Based on Theorems \ref{u-um} and \ref{u-un} and Lemma \ref{umn}, we derive the strong error between $u$ and $u_m^N$.

\begin{tm}\label{u-umn}
For any $p\ge 1$, there exists $C=C(\xi,Q,T,p)$ such that
\begin{align}\label{u-umn0}
\left(\E \left[\sup_{m\in \Z_m} \|u(t_m)-u_m^N\|^p \right]\right)^\frac1p
\le C (N^{-2}+ \tau^\frac12).
\end{align}
\end{tm}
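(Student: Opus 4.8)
The plan is to bound the full error by a triangle inequality splitting it into a purely spatial part and a purely temporal part,
\[
\|u(t_m)-u_m^N\|\le \|u(t_m)-u^N(t_m)\|+\|u^N(t_m)-u_m^N\|,
\]
then take $\sup_{m\in\Z_M}$ followed by the $\mathbb L^p(\Omega)$-norm. The first term is the spectral Galerkin error, already controlled by Theorem \ref{u-un}, which yields a bound of order $N^{-2}$ with a constant independent of $N$. The second term is the error of the splitting Crank--Nicolson discretization \eqref{spe-cn} of the semidiscrete system \eqref{spe}, which I expect to be of order $\tau^{1/2}$ uniformly in $N$. As in the proof of Theorem \ref{u-um}, it suffices to treat $p=2$ and recover the general case by H\"older inequality.

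First I would observe that the spectral Galerkin equation \eqref{spe} has exactly the same structure as Eq. \eqref{nls}, with the nonlinearity and the noise composed with the orthogonal projection $\PPP^N$. Since $\PPP^N$ is self-adjoint, idempotent, a contraction on $\mathbb L^2$, and commutes with $\Delta$ (see \eqref{pn}), every structural ingredient used in Sections \ref{sec-spl} and \ref{sec-cn}---charge conservation, the energy evolution, the $\HH^2$ a priori bounds, and the $\HH^1$-exponential integrability---carries over to the spectral system. The corresponding estimates are already available \emph{uniformly in $N$}: Lemma \ref{h2-un} and Proposition \ref{exp-un} for $u^N$, and Lemma \ref{umn} for $u_m^N$.

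Next I would introduce the spectral splitting process $u_\tau^N$, that is, the exact-in-time splitting of \eqref{spe} analogous to $u_\tau$ in \eqref{spl}, and reproduce the two-step argument of Section \ref{sec-cn}: the analogue of Theorem \ref{u-ut} bounding $\|u^N(t_m)-u_\tau^N(t_m)\|$, and the analogue of Theorem \ref{u-um} bounding $\|u_\tau^N(t_m)-u_m^N\|$, each by $C\tau^{1/2}$. The proofs are verbatim repetitions of those for $u$, $u_\tau$, $u_m$: the local-error expansions of Lemma \ref{u-ud-us}, the stochastic-integral estimates of Lemma \ref{est-int}, the exponential-moment control of Lemma \ref{uutexp}, and the discrete Gronwall inequality of Lemma \ref{dis-gro} all apply, the only new feature being occasional insertions of $\PPP^N$. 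These are harmless because $\PPP^N$ is an $\mathbb L^2$-contraction and never increases an $\HH^s$-norm, while the projection defect is controlled through $\|(I-\PPP^N)v\|\le \lambda_{N+1}^{-1}\|v\|_{\HH^2}$ from \eqref{spe-err}. Combining the two steps gives $\left(\E\left[\sup_{m\in\Z_M}\|u^N(t_m)-u_m^N\|^p\right]\right)^{1/p}\le C\tau^{1/2}$.

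The main obstacle will be the bookkeeping of $N$-uniformity: in re-running the Theorem \ref{u-um} argument I must ensure that every constant depends only on $\xi$, $Q$, $T$, $p$ and not on $N$. This hinges on the uniform-in-$N$ a priori and exponential-integrability estimates cited above, together with the fact that the $\PPP^N$ insertions never enlarge any norm, so the exponential-moment bounds feeding the discrete Gronwall step remain bounded uniformly. Once this is secured, adding the two orders $N^{-2}$ and $\tau^{1/2}$ yields \eqref{u-umn0}.
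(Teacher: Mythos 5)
Your proposal is correct and follows essentially the same route as the paper: the paper also splits the error at the semidiscrete spectral solution into a spatial part controlled by Theorem \ref{u-un} and a temporal part obtained by rerunning the splitting Crank--Nicolson analysis of Theorem \ref{u-um} for the spectral system, with $N$-uniformity supplied by the a priori and exponential-integrability estimates of Lemma \ref{umn}. Your additional bookkeeping of where uniformity in $N$ enters is exactly what the paper's terse proof implicitly relies on.
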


\textbf{Proof}
We split the error into a spatial error and a temporal error:
\begin{align*}
\|u(t_m)-u_m^N\|^p
\le C(\|u(t_m)-u^N_\tau(t_m)\|^p
+\|u^N_\tau(t_m)- u_m^N\|^p).
\end{align*}
The spatial error is controlled by \eqref{u-un0} in Theorem \ref{u-un},
and the temporal error is estimated by \eqref{u-um0} in Theorem \ref{u-um} and the a priori estimations in Lemma \ref{umn}. 
\qed	

\subsection{Finite difference splitting Crank--Nicolson scheme}
The proposed temporal splitting approach gives a kind of full discretizations with different spatial approximations, such as the following finite difference splitting Crank--Nicolson scheme. 
Let $h=\frac1{N+1}$ and $\{x_n:=n h,\ n\in \Z_{N+1}\}$ be a partition of the spatial interval $\OOO$.
We define a grid function $f^h$ in $\{x_n\}_{n\in \Z_{N+1}}$ as $f^h(x_n)=f^h(n)$ with $f^h(0)=0$ and $f^h(N+1)=0$, $m\in \Z_{N+1}$.
Denote $\delta_+f^h(n):=\frac {f^h(n+1)-f^h(n)}h$ and $\delta_-f^h(n):=\frac {f^h(n)-f^h(n-1)}h$.
The finite difference splitting Crank--Nicolson scheme is
\begin{align}\label{fdm}
\begin{cases}
u^{(h,D)}_{m+1}=u^h_m+\bi \tau\left(\delta_+ \delta_-u^{(h,D)}_{m+\frac 12}
+\frac\lambda2 \left(| u^h_m|^2+|u^{(h,D)}_{m+1}|^2\right) u^{(h,D)}_{m+\frac12} \right),\\
u^h_{m+1}=\exp(-\bi(W(t_{m+1})-W(t_m))) u^{(h,D)}_{m+1},
\quad m\in \Z_M, 
\end{cases}
\end{align}
where $u^{(h,D)}_{m+\frac 12}=\frac 12(u^h_m+u^{(h,D)}_{m+1})$.

By combining the error estimate of spatial centered difference method in \cite[Theorem 4.1]{CHL16b} and similar arguments in Theorem \ref{u-umn}, the strong error order of the above finite difference splitting Crank--Nicolson scheme \eqref{fdm} can be obtained.

\begin{rk}
Combining our temporal splitting Crank--Nicolson scheme with spatially numerical methods such as Galerkin finite element method and certain finite difference methods, we can also obtain strong convergence rates of related fully discrete splitting schemes.
However, as noted in Remark \ref{rk-fem}, one may not obtain optimal strong convergence rates for these schemes under minimal regularity assumptions on initial datum $\xi$ and noise's covariance operator $Q$.
\end{rk}

\section{Numerical Experiments}
\label{sec-num}

In this section, we present several numerical tests to verify our theoretic results including the evolution of charge, the exponential moments of energy and the strong convergence rates for the numerical schemes.

We use the  spectral splitting Crank--Nicolson scheme \eqref{spe-cn}  to fully discretize the following stochastic NLS equation with a noise intensity $\ee\in \R$:
\begin{align*}
\begin{cases}
\bi\, du+(\Delta u+|u|^2 u) dt=\ee \, u\circ dW(t),
\quad (t,x)\in (0,T]\times (0,1);\\
u(t,0)=u(t,1)=0,\quad t\in [0,T];\\
u(0,x)=\text{sin}(\pi x),\quad x\in (0,1).
\end{cases}
\end{align*}
Here we take the $Q$-wiener process as 
\begin{align*}
W(t,x)=\sum_{k=1}^\infty \frac{\sqrt2 \sin(k\pi x)}{1+k^{2.6}} \beta_k(t),\quad (t,x)\in [0,T]\times (0,1).
\end{align*}
To simulate this process, we truncate the series by the first $K$ terms with various $K\in \N_+$ and take $P=1000$ trajectories.


\begin{figure}
	\centering
	\subfigure[]{
		\includegraphics[width=0.8\linewidth]{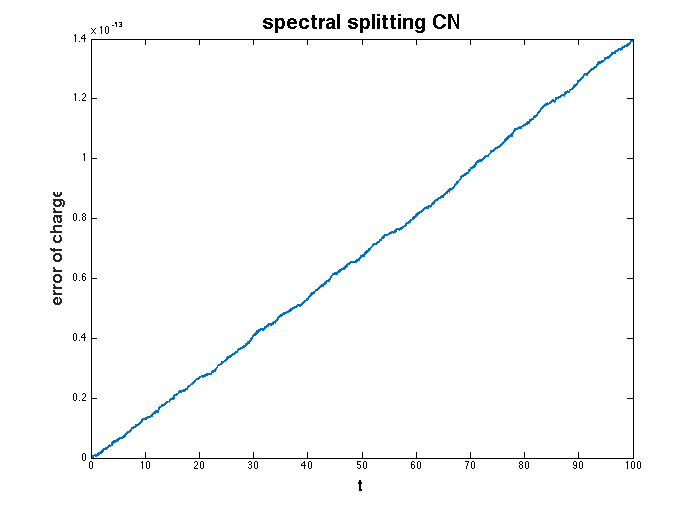}
	}
	\caption{Evolution of error for charge($\|u^N_m\|^2-\|\xi\|^2$) by spectral splitting Crank--Nicolson scheme with $K=N$.} 
\label{fig:charge}
\end{figure}

For the simulations of the evolution of charge and exponential moment of energy, we take $T=100$, $\ee = 10$, $N=2^6$,
$h=2^{-6}$ and $\tau=2^{-10}$.
Figure \ref{fig:charge} shows the charge conservation law of the spectral splitting Crank--Nicolson scheme \eqref{spe-cn}.
Figures \ref{fig:exp_spe}  illustrate the exponential moment of energy for schemes \eqref{spe-cn} and \eqref{fdm} with different parameters $\alpha = 0.7$ and  $\alpha=1$ appeared in Lemma \ref{exp-um}, which recover the exponential integrability results \eqref{exp-um0}.

\begin{figure}
	\centering
	\subfigure[]{
		\includegraphics[width=0.45\linewidth]{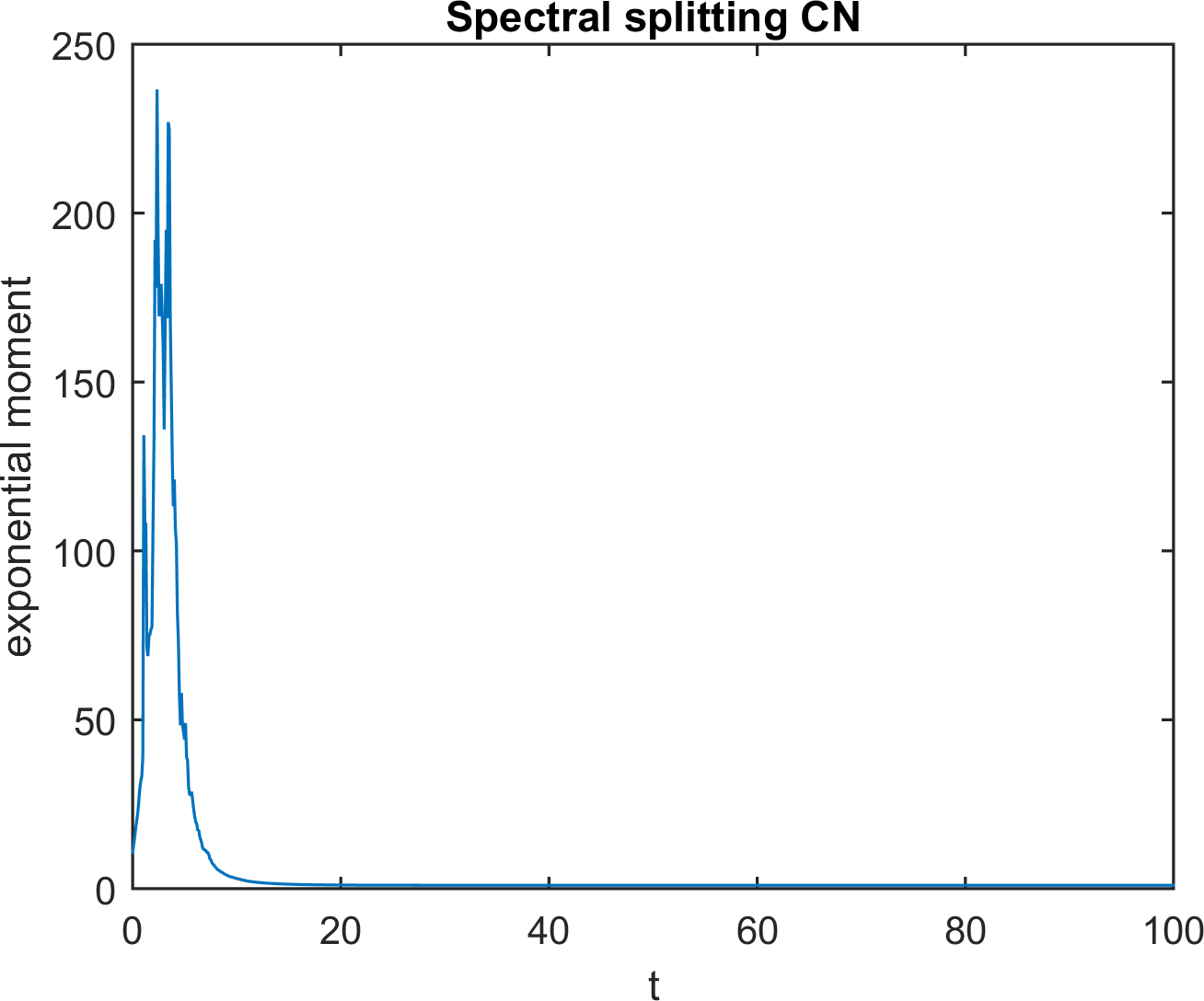}
	}
	\subfigure[]{
		\includegraphics[width=0.45\linewidth]{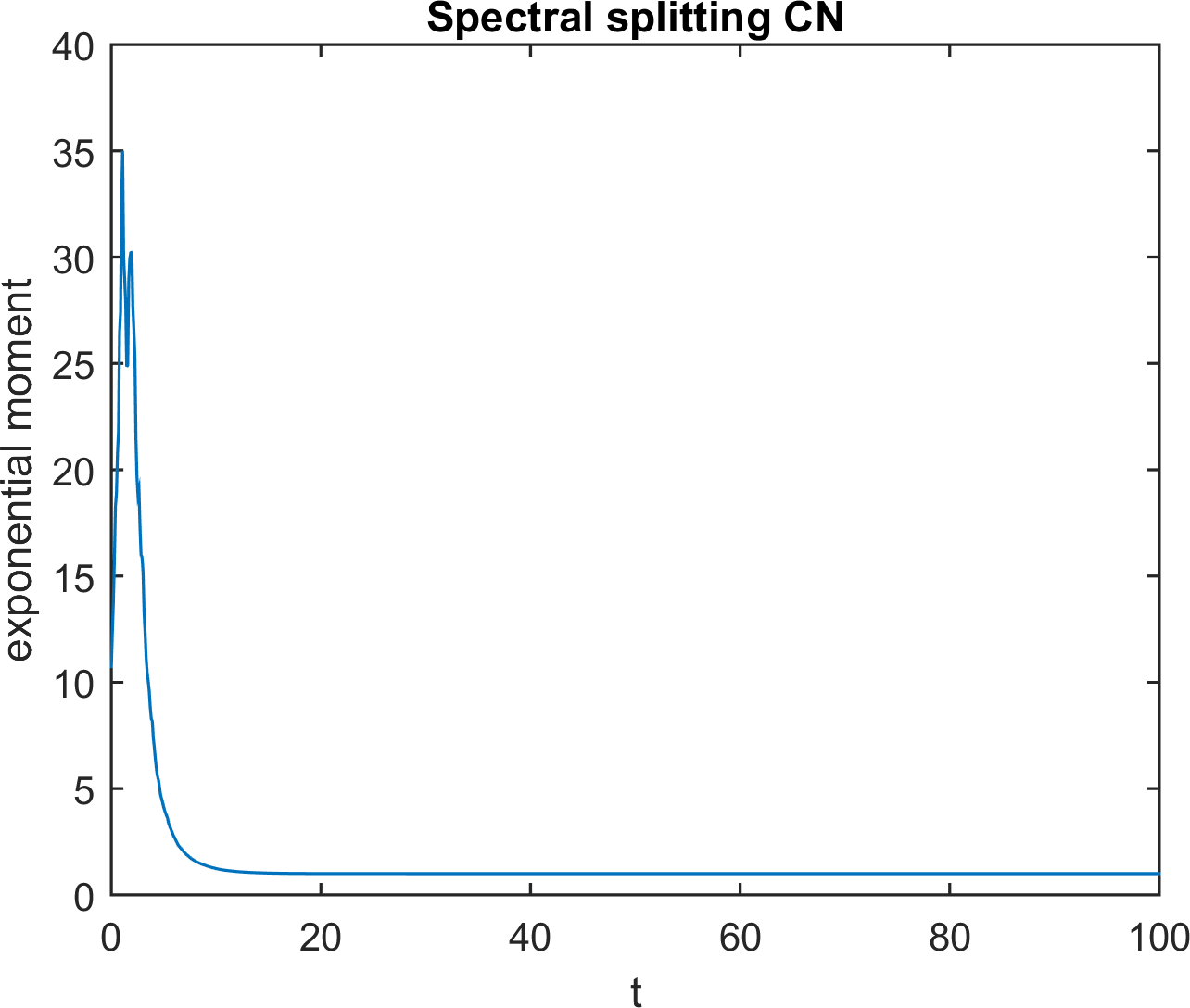}
	}
	\caption{Exponential integrability for spectral splitting Crank--Nicolson: $(a)$ $\alpha=0.7$ and  $(b)$ $\alpha=1$.}
	\label{fig:exp_spe}
\end{figure}

Next we turn to the tests about the temporal and spatial strong convergence rates of the schemes \eqref{spe-cn}. 
Errors of the numerical solutions against $\tau$ or $h$ on a log-log scale are displayed in Figures \ref{fig:spe}.
We also apply two noise intensities $\ee =1$ and $\ee=10$ in these tests to check the strong convergence results.
More precisely, the left figures (a) in Figures \ref{fig:spe} presents the temporal strong convergence rates for these two schemes.
Since there is no analytic solution for SPDEs with non-monotone nonlinearity in nearly all cases, we first compute a reference solution $u_{{ref}}$ on a fine mesh $\tau_{ref}=2^{-14}$. 
Compared to five coarser girds by $\tau=2^p\tau_{ref}$, $p=1,\cdots,5$, the strong errors at $T=1$ are plotted with $N=2^8$.
The right figures (b) in Figures \ref{fig:spe} shows the spatial strong convergence rates.
For fixed $\tau=2^{-8}$, the corresponding reference spatial mesh is $h_{ref}=2^{-10}$ and other five coarser girds are $h=2^ph_{ref}$, $p=1,\cdots,5$. The slopes in these figures indicate that the two schemes both possess temporal strong convergence order 1/2 and spatial strong convergence order 2, which coincides with the theoretical results in Theorems \ref{u-umn}.

\begin{figure}
	\centering
	\subfigure[]{
		\includegraphics[width=0.45\linewidth]{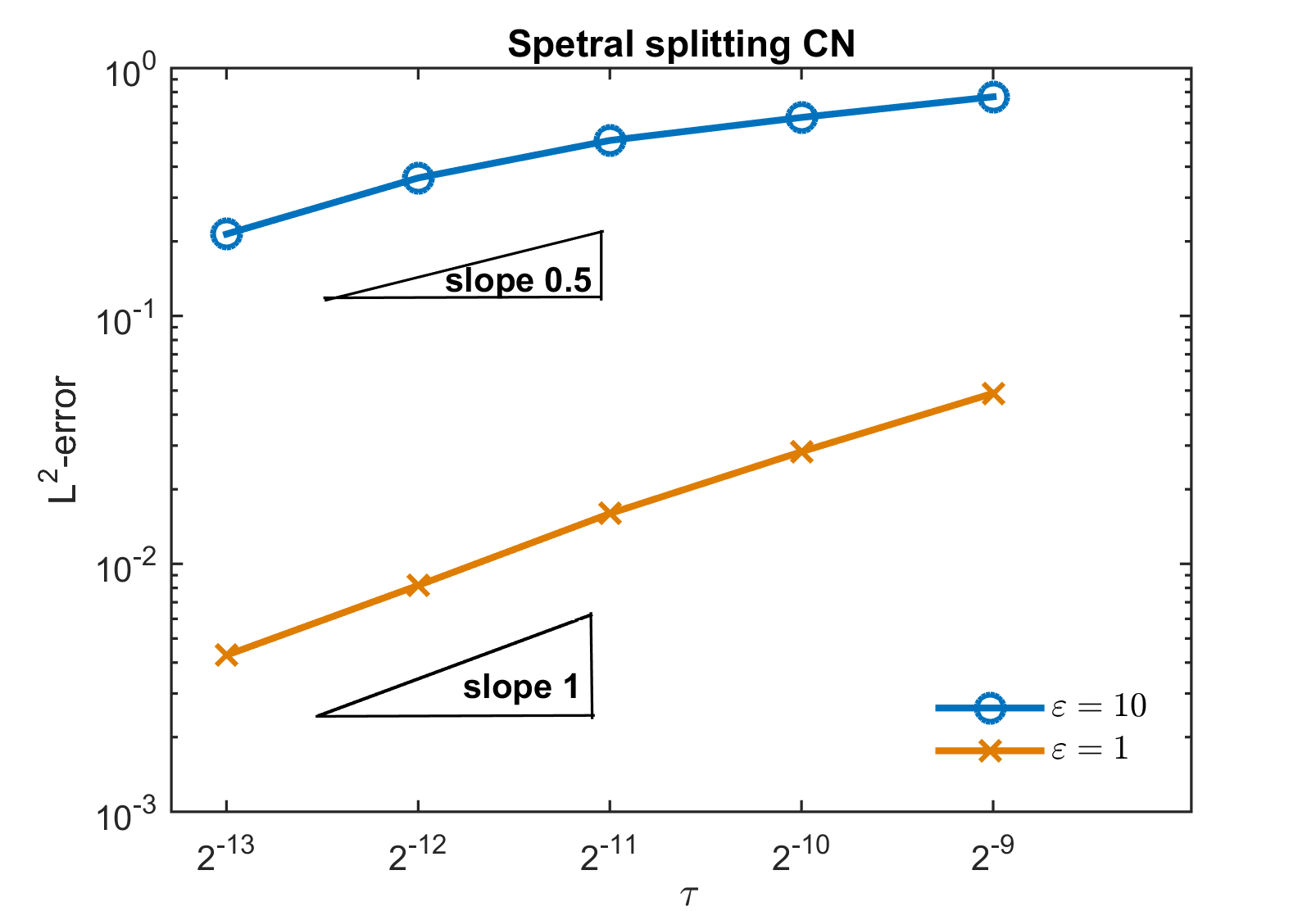}
	}
	\subfigure[]{
		\includegraphics[width=0.45\linewidth]{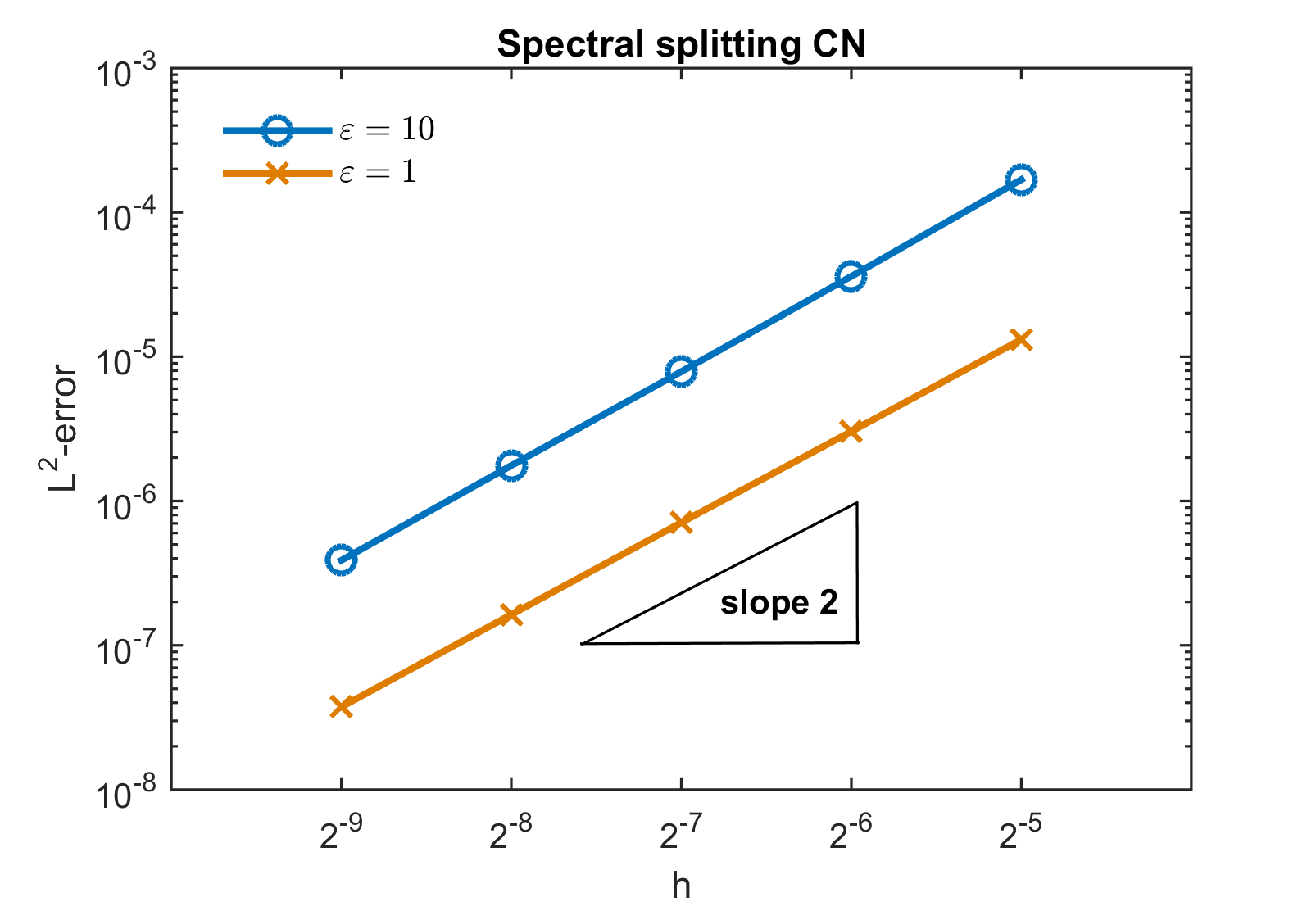}
	}
	\caption{Temporal and spatial convergence rates for pseudo-spectral splitting Crank--Nicolson:  $(a)$ temporal strong convergence rate and $(b)$ spatial convergence rate.}
\label{fig:spe}
\end{figure}

\bibliographystyle{amsalpha}
\bibliography{bib}

\end{document}